\newcolumntype{C}{>{$}c<{$}}
\newcolumntype{L}{>{$}l<{$}}
\newcolumntype{R}{>{$}r<{$}}
\newcommand\T{\rule{0pt}{2.6ex}}		
\newcommand\B{\rule[-1.2ex]{0pt}{0pt}} 	
\newcommand\M{\T\B} 					
\newtheorem{conjecture}{Conjecture}[section]
\newtheorem{corollary}{Corollary}[section]
\newtheorem{definition}{Definition}[section]
\newtheorem{lemma}[corollary]{Lemma}
\newtheorem{proposition}[corollary]{Proposition}
\newtheorem{theorem}[corollary]{Theorem}
\newtheorem{example}{Example}[section]
\newtheorem{remark}{Remark}[section]
\title{Singularities of Frontal Surfaces}
\author{C. Mu\~noz-Cabello, J.J. Nu\~no-Ballesteros, R. Oset Sinha}
\address{Departament de Matem\`{a}tiques,
Universitat de Val\`encia, Campus de Burjassot, 46100 Burjassot,
Spain}
\email{chmuca@alumni.uv.es}
\email{Raul.Oset@uv.es}
\address{Departament de Matemàtiques, Universitat de Val\`encia, Campus de Burjassot, 46100 Burjassot SPAIN.
Departamento de Matemática, Universidade Federal da Paraíba CEP 58051-900, João Pessoa - PB, BRAZIL}
\email{Juan.Nuno@uv.es}
\thanks{Work of Juan J. Nuño-Ballesteros and R. Oset Sinha partially supported by Grant PGC2018-094889-B-100 funded by MCIN/AEI/ 10.13039/501100011033 and by ``ERDF A way of making Europe"}
\subjclass[2000]{Primary 32S30; Secondary 32S25, 58S25}
\keywords{frontals, invariants of mappings, frontal Milnor number, double point curve}
\begin{document}        
\maketitle

\begin{abstract}
We consider singularities of frontal surfaces of corank one and finite frontal codimension.
We look at the classification under $\mathscr A$-equivalence and introduce the notion of frontalisation for singularities of fold type.
We define the cuspidal and the transverse double point curves and prove that the frontal has finite codimension if and only if both curves are reduced.
Finally, we also discuss about the frontal versions of the Marar-Mond formulas and the Mond's conjecture.
\end{abstract}

\section{Introduction}
	In this paper we are interested in the local behaviour of complex analytic mappings $f\colon N\to Z$ of frontal type, where $N$ and $Z$ are complex analytic manifolds of dimensions 2 and 3 respectively. In general, a mapping $f\colon N\to Z$ is called a frontal if it admits a Legendrian lifting $\tilde f:N\to PT^*Z$, where $PT^*Z$ is the projectivised cotangent bundle of $Z$. Roughly speaking, this means that the image $f(N)$ has a well defined tangent hyperplane at each point $f(x)$, with $x\in N$. 

Singularities of frontals were considered for the first time by Zakalyukin and Kurbatskii in \cite{ZakalyukinKurbatskii} and they 
are a natural generalisation of wave fronts, which occur in the particular case that the Legendrian lifting $\tilde f$ is an immersion. There has been a great interest for frontals in the last decades, specially in the $C^\infty$ real category and looking at differential geometric properties. The fact that you have a well defined tangent plane everywhere provides a nice starting point if you want to extend things like first or second fundamental forms, curvatures, etc to submanifolds with singularities (see for instance \cite{ChenPeiTakahashi, MartinsNuno, MurataUmehara, OsetSaji, SUYGaussBonnet}).

In a forthcoming paper \cite{Frontals}, we will develop the general Thom-Mather theory of frontals of any dimension, but restricted to corank one singularities. Our approach is based on Ishikawa's work \cite{Ishikawa_Legendrian} about stability and infinitesimal deformations of integral mappings $\tilde f\colon N\to PT^*Z$ under Legendrian equivalence, 
although we want to understand the singularity downstairs, at the level of the frontal, rather than upstairs, at the level of the Legendrian lifting. In particular, we consider infinitesimal deformations which come from frontal unfoldings of $f$ itself. The corank one assumption is a technical but necessary condition in order to be able to apply Ishikawa's theory.

In our case, by taking local charts in $N$ and $Z$ we can restate our problem in terms of classification of frontal map germs $f\colon(\mb{C}^2,S)\to(\mb{C}^3,0)$ of corank one under $\mathscr A$-equivalence, that is, under holomorphic coordinate changes in the source and the target. It is well known that the only frontal stable  singularities of surfaces are cuspidal edges, swallowtails, folded Whitney umbrellas or their transverse self-intersections (see Figure \ref{fig: stable_frontals}). By the frontal version of the Mather-Gaffney criterion (see Theorem \ref{thm: frontal mather gaffney}), if $f$ has finite frontal codimension, then it has isolated instability and hence, the only singularities outside the origin are cuspidal edges or transverse double points. Our main goal is to study the geometry and the invariants of this type of frontal surface singularities.

In Section 3, we look at the classification of simple frontal singularities. We introduce the notion of frontalisation of a germ of fold type and we deduce that the classification of frontals of fold type is closely related to the classification of $\mathscr A$-simple singularities obtained by Mond in \cite{Mond_Classification}. In particular, all simple germs of fold type in Mond's classification $S_k,B_k,C_k$ and $F_4$ have a frontal version with the same frontal codimension.

In Section 4, we define the cuspidal curve $C(f)$ and the transverse double point curve $D_+(f)$. These two curves are considered with a certain analytic structure and they have the property that $f$ has finite frontal codimension if and only if both curves are reduced. We recall that $f$ has finite $\mathscr A_e$-codimension if and only if the double point curve $D(f)$ is reduced (see \cite{MararNunoPenafort}). If $f$ has corank one, then we can assume that is given by $f(x,y)=(x,p(x,y),q(x,y))$, for some function germs $p$ and $q$. The space $D^2(f)$ is defined in $\mb{C}^3$ by the divided differences:
\[
\frac{p(x,y')-p(x,y)}{y'-y}=\frac{q(x,y')-q(x,y)}{y'-y}=0,
\]
and the projection to $\mb{C}^2$ given by $(x,y,y')\mapsto(x,y)$ is precisely the double point curve $D(f)$. As set germs, $D(f)=C(f)\cup D_+(f)$. However when $f$ is a frontal, $D(f)$ has a non reduced equation $p_y^2\tau=0$, where $p_y=0$ is the equation of $D(f)$ and $\tau=0$ is the equation of $D_+(f)$.

Finally, in Section 5 we consider a frontal stable perturbation $f_s$ of $f$. This always exists and is well defined when $f$ has finite frontal codimension.  We get invariants which count the number of 0-stable singularities of $f_s$ which we call $S,W,K$ and $T$ and correspond to the number of swallowtails, folded Whitney umbrellas, cuspidal double points and triple points, respectively. We give algebraic formulas to compute these invariants in terms of some algebras related to $f$ and prove the frontal version of the Marar-Mond formulas (see \cite{MararMond_Formula}). Moreover, as it happens with germs of finite $\mathscr A_e$-codimension, the image of $f_s$ has the homotopy type of a wedge of $2$-spheres and the number of such spheres is called the frontal Milnor number, denoted by $\mu_\mathscr F(f)$. This is  analogous of the image Milnor number $\mu_I(f)$ defined by Mond  in \cite{Mond_VanishingCycles} in the context of finite $\mathscr A_e$-codimension. We discuss some basic properties of this frontal Milnor number, and propose a frontal version of Mond's conjecture \cite{Mond_Conjecture}, which states that the image Milnor number of $f\colon (\mb{C}^n,0) \to (\mb{C}^{n+1},0)$ is greater than or equal to its $\ms{A}_e$-codimension for all values of $n$ such that $(n,n+1)$ are in Mather's nice dimensions, with equality if and only if $f$ is quasihomogeneous.

\section{Preliminaries}
	We refer to \cite{MondNuno_Singularities} for further definitions and proofs of the results shown here.
Throughout this article, we shall use the following notation: we set $\ms{O}_n$ as the ring of germs of functions on $(\mb{C}^n,S)$, $\theta_n$, as the $\ms{O}_n$-module of germs of vector fields $\xi$ on $(\mb{C}^n,S)$, and $\mf{m}_n$, as the ideal of $f \in \ms{O}_n$ such that $f(S)=0$.
Furthermore, for $f\colon (\mb{C}^n,S) \to (\mb{C}^{n+1},0)$, we set $\theta(f)$ as the $\ms{O}_n$-module of germs of vector fields $\xi$ along $f$, and $\Sigma(f)$ as the set-germ of non-immersive points of $f$.
Unless otherwise stated, all maps are assumed holomorphic.

We say $f,g\colon (\mb{C}^n,S) \to (\mb{C}^{n+1},0)$ are \textbf{$\ms{A}$-equivalent} if there exist germs of diffeomorphisms $\psi$, $\phi$ such that $g=\psi\circ f\circ \phi^{-1}$.
A ($d$-parameter) \textbf{unfolding} of $f\colon (\mb{C}^n,S) \to (\mb{C}^{n+1},0)$ is a holomorphic 
\begin{function}
	F\colon (\mb{C}^n\times\mb{C}^d,S\times\{0\}) \arrow[r] & (\mb{C}^{n+1}\times\mb{C}^d,0)\\
	(x,u) \arrow[r, maps to]								& (f_u(x),u)
\end{function}
such that $f_0=f$.
We say $f$ is \textbf{$\ms{A}$-stable} if every $d$-parameter unfolding $F$ of $f$ is $\ms{A}$-equivalent to $f\times\id_{(\mb{C}^d,0)}$.

\begin{definition}
	The \textbf{$\ms{A}$-tangent space} of $f\colon (\mb{C}^n,S) \to (\mb{C}^{n+1},0)$ is the $\mb{C}$-vector space
		\[T\ms{A}_ef=\left\{\left.\frac{df_s}{ds}\right|_{s=0}: f_s=\psi_s\circ f\circ \phi_s\right\} \subseteq \theta(f)\]
	where $\psi_s$ and $\phi_s$ are smooth families of diffeomorphisms.
	We define the \textbf{$\ms{A}$-codimension} of $f$ as the $\mb{C}$-codimension of $T\ms{A}_ef$ in $\theta(f)$.
	We shall say that $f$ is \textbf{$\ms{A}$-finite} if $\codim_{\ms{A}_e}(f) < \infty$.
\end{definition}

Using the chain rule, it is then easy to see that
	\[T\ms{A}_ef=tf(\theta_n)+\omega f(\theta_{n+1})\]
where $tf\colon \theta_n \to \theta(f)$ and $\omega f\colon \theta_{n+1} \to \theta(f)$ are respectively given by $tf(\xi)=df\circ \xi$ and $\omega f(\eta)=\omega\circ f$.

\begin{theorem}
	A holomorphic $f\colon (\mb{C}^n,S) \to (\mb{C}^{n+1},0)$ is $\ms{A}$-stable if and only if $\codim_{\ms{A}_e}(f)=0$.
\end{theorem}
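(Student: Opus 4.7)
My plan relies on the standard reformulation of $\ms{A}_e$-codimension zero as \emph{infinitesimal $\ms{A}$-stability}, namely $T\ms{A}_ef=\theta(f)$, combined with the infinitesimal characterisation $T\ms{A}_ef=tf(\theta_n)+\omega f(\theta_{n+1})$ recalled just before the statement. Once that reformulation is in place, the two implications are proved separately.

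For the implication $\ms{A}$-stable $\Rightarrow \codim_{\ms{A}_e}(f)=0$, I would pick any $\dot f\in \theta(f)$ and consider the one-parameter unfolding
\[
F(x,s)=(f(x)+s\dot f(x),\,s).
\]
By $\ms{A}$-stability, $F$ is $\ms{A}$-equivalent to $f\times\id_{(\mb{C},0)}$; writing this equivalence componentwise produces holomorphic families $\psi_s,\phi_s$ of diffeomorphism germs with $\psi_0=\id$, $\phi_0=\id$ and $f_s=\psi_s\circ f\circ \phi_s^{-1}$. Differentiating at $s=0$ exhibits $\dot f$ as an element of $T\ms{A}_ef$, and since $\dot f$ was arbitrary we conclude $T\ms{A}_ef=\theta(f)$.

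For the converse, which is essentially Mather's theorem, I would proceed in two stages. First, I would lift infinitesimal stability of $f$ to infinitesimal stability of any $d$-parameter unfolding $F$: the identity $T\ms{A}_ef=\theta(f)$ over $\ms{O}_n$ promotes, via Malgrange's preparation theorem, to the analogous identity $T\ms{A}_eF=\theta(F)$ over $\ms{O}_{n+d}$ modulo the parameter variables. Second, I would apply the homotopy method: for $F(x,u)=(f_u(x),u)$, interpolate via $F_t(x,u)=(f_{tu}(x),u)$ between $f\times\id$ (at $t=0$) and $F$ (at $t=1$), view the family as an unfolding in the enlarged parameter $(u,t)$, and use the infinitesimal stability just obtained to produce time-dependent vector fields in source and target whose time-one flows $\Phi,\Psi$ realise $F=\Psi\circ (f\times\id)\circ \Phi^{-1}$.

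The main obstacle is the preparation-theorem step: the promotion of infinitesimal stability from $f$ to an arbitrary unfolding $F$ is not tautological, because $\theta(F)$ is a module over the larger ring $\ms{O}_{n+d}$, and one must invoke Malgrange's theorem to see that an $\ms{O}_n$-generating set of $\theta(f)$ modulo $T\ms{A}_ef$ still generates $\theta(F)$ modulo $T\ms{A}_eF$ as an $\ms{O}_{n+d}$-module. Once that is in hand, the homotopy step reduces to the routine integration of time-dependent holomorphic vector fields with appropriate support along the parameter fibres, giving the required trivialising diffeomorphisms.
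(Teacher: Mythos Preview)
The paper does not supply its own proof of this statement: it sits in the Preliminaries, and the opening line of that section defers all proofs to \cite{MondNuno_Singularities}. Your plan is precisely the classical Mather argument one finds there (and in Mather's original papers): infinitesimal stability $\Leftrightarrow$ stability, via preparation plus the Thom--Levine homotopy method. So there is nothing to compare against in the paper itself, and your outline is the standard one.

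One point to tighten in your forward implication. From the paper's definition, ``$F$ is $\ms{A}$-equivalent to $f\times\id$'' means equivalent as map-germs, with no requirement that the conjugating diffeomorphisms be unfoldings (i.e.\ preserve the parameter $s$). You therefore cannot simply ``write this equivalence componentwise'' to obtain one-parameter families $\psi_s,\phi_s$ with $f_s=\psi_s\circ f\circ\phi_s^{-1}$: an arbitrary $\ms{A}$-equivalence between $F$ and $f\times\id$ may scramble the $s$-coordinate. The usual remedy is either to work from the outset with the (a priori stronger) notion of triviality \emph{as an unfolding}, or to note that $\ms{A}$-equivalence preserves $\ms{A}_e$-codimension, so $\codim_{\ms{A}_e}F=\codim_{\ms{A}_e}(f\times\id)=\codim_{\ms{A}_e}f$, and then run the converse argument first to see that infinitesimally stable germs are stable in the level-preserving sense; once both notions of stability are shown to coincide with $\codim_{\ms{A}_e}=0$, they coincide with each other. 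Either way this is routine, but the phrase ``componentwise'' hides a genuine step.
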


A map-germ $f\colon (\mb{C}^n,S) \to (\mb{C}^{n+1},0)$ is \textbf{finite} if there exists a closed representative $f\colon N \to Z$ with finite fibres. Finite map-germs are an important tool in analytic geometry, since they preserve coherent sheaves (see e.g. \cite{SteinSpaces}).

\begin{theorem}\label{thm: finiteness criterion}
    Given a holomorphic $f\colon(\mb{C}^n,S) \to (\mb{C}^{n+1},0)$, the following statements are equivalent:
    \begin{enumerate}
        \item $f$ is finite;
        \item $\ms{O}_n$ is finitely generated over $(f_1,\dots,f_{n+1})$;
        \item $\dim_\mb{C}\ms{O}_n/(f_1,\dots,f_{n+1})<\infty$;
        \item as germs at $S$, $f^{-1}(\{0\})=S$.
    \end{enumerate}
\end{theorem}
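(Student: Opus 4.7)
The plan is to close the cycle of implications $(4)\Rightarrow(3)\Rightarrow(2)\Rightarrow(1)\Rightarrow(4)$. A useful preliminary observation is that via the pullback $f^*\colon \ms{O}_{n+1}\to \ms{O}_n$ the ring $\ms{O}_n$ becomes an $\ms{O}_{n+1}$-module in which $\mf{m}_{n+1}\cdot \ms{O}_n=(f_1,\dots,f_{n+1})$, so that the quotient appearing in (3) is exactly $\ms{O}_n/\mf{m}_{n+1}\ms{O}_n$ and (2) is the assertion that this module is finitely generated.

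For $(4)\Rightarrow(3)$ I would invoke R\"uckert's analytic Nullstellensatz: the zero locus of $(f_1,\dots,f_{n+1})$ in $(\mb{C}^n,S)$ coincides with $f^{-1}(\{0\})$, which by (4) equals $S$. Hence the radical of $(f_1,\dots,f_{n+1})$ is $\mf{m}_n$, some power $\mf{m}_n^N$ lies in $(f_1,\dots,f_{n+1})$, and $\ms{O}_n/(f_1,\dots,f_{n+1})$ appears as a quotient of the finite-dimensional artinian ring $\ms{O}_n/\mf{m}_n^N$. The implication $(3)\Rightarrow(2)$ is the key substantive point and would follow from the Weierstrass--Malgrange preparation theorem: any germs $g_1,\dots,g_k\in\ms{O}_n$ whose classes form a $\mb{C}$-basis of $\ms{O}_n/\mf{m}_{n+1}\ms{O}_n$ automatically generate $\ms{O}_n$ as an $\ms{O}_{n+1}$-module.

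For $(2)\Rightarrow(1)$, I would pick convergent representatives of the generators $g_1,\dots,g_k$ and of the coefficients in the module relations $x_i\,g_j=\sum_\ell a^{ij}_\ell(f(x))\,g_\ell$ on a common polydisc $U$ around $S$. This exhibits each coordinate of $\mb{C}^n$ as integral over $f^*\ms{O}_V$ for a sufficiently small polydisc $V\ni 0$, so that $f\colon f^{-1}(V)\cap U\to V$ is proper with finite fibres, which is the geometric meaning of finiteness. Finally, $(1)\Rightarrow(4)$ is immediate: a closed representative with finite fibres has $f^{-1}(\{0\})$ a finite analytic subset of $U$ containing $S$, so as a germ at $S$ it must be $S$ itself.

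The only genuinely non-trivial step is $(3)\Rightarrow(2)$, where the preparation theorem is indispensable; everything else reduces to the Nullstellensatz, an integrality argument, and the bare definitions, so I expect the writeup to be short once that tool is cited from the references on complex analytic geometry.
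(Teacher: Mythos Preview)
The paper does not actually prove this theorem: it is stated in the Preliminaries section as a standard fact, with the blanket reference to \cite{MondNuno_Singularities} at the start of that section covering the proof. Your cycle $(4)\Rightarrow(3)\Rightarrow(2)\Rightarrow(1)\Rightarrow(4)$ is the standard argument one finds in that reference and in the complex-analytic literature generally, and each step is correct as outlined; in particular your identification of the preparation theorem as the only substantive ingredient in $(3)\Rightarrow(2)$ is exactly right. There is nothing to compare against and nothing to fix.
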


We then have the following geometric criterion for $\ms{A}$-finiteness: $f\colon (\mb{C}^2,S) \to (\mb{C}^3,0)$ is $\ms{A}$-finite if and only if it has a finite representative $f\colon N \to Z$ such that $f(N)$ only contains transversal double points outside the origin.

Another important tool which we shall use in \S 4 are the Fitting ideals of a map-germ.
Let $M$ be a finitely generated module on a Noetherian ring $R$.
A \textbf{finite presentation} on $M$ is an exact sequence in the form
\begin{diagram}
    R^b \arrow[r, "\lambda"] & R^a \arrow[r] & M \arrow[r] & 0
\end{diagram}
The matrix $\lambda$ is known as the presentation matrix of $M$.
We then define the $k$-th \textbf{Fitting ideal} of $M$ as the ideal $\Fitt_k(M)$ in $R$ generated by the $(c-k)\times(c-k)$ minors of $\lambda$, $c=\min\{a,b\}$. For $k > c$, we set $\Fitt_k(M)=0$, and for $k < 0$, $\Fitt_k(M)=R$.
In particular, $\Fitt_k(M)$ is independent of the choice of $\lambda$ (\cite{Arnold_Fitting}), making this notation unambiguous.

Let $f\colon (X,S) \to (Y,y)$ be a finite holomorphic map-germ between complex manifolds, $\ms{O}_X$ be the sheaf of holomorphic functions on $X$, and $f_*\ms{O}_X$ be the pushforward sheaf on $Y$,
    \[f_*\ms{O}_X(U)=\ms{O}_X(f^{-1}(U))\]
By Theorem \ref{thm: finiteness criterion}, $(f_*\ms{O}_X)_y=\oplus\{f_*(\ms{O}_X,x)\colon x \in S\}$ is finitely generated over $\ms{O}_{Y,y}$. Since $\ms{O}_{Y,y}$ is a Noetherian ring, we can consider a finite resolution
\begin{diagram}
    \ms{O}_{Y,y}^b \arrow[r, "\lambda"] & \ms{O}_{Y,y}^a \arrow[r] & (f_*\ms{O}_X)_y \arrow[r] & 0
\end{diagram}
An algorithm to compute the presentation matrix for polynomial maps can be found at \cite{Guille_presmatrix}.

\begin{definition}
    We define the $k$-th Fitting ideal of $f$ at $y$ as the $\ms{O}_{Y,y}$-module
        \[\mc{F}_k(f)=\Fitt_k((f_*\ms{O}_X)_y)\]
    and the $k$-th target multiple point space of $f$ as the zero locus $M_k(f)$ of $\mc{F}_{k-1}(f)$.
\end{definition}

Given $p,q \in \ms{O}_n[z]$, we set the $\ms{O}_n$-module $M_z(p,q)=\ms{O}_{n}[z]/(p,q)$. The $0$th Fitting ideal of this algebra is generated by the algebraic resultant $\Res_z(p,q)$ (see \cite{Teissier_Invariants}), and is thus known as the \textbf{resultant ideal} of $p,q$.

\section{Frontal map-germs}
	Roughly speaking, a frontal hypersurface is an analytic hypersurface $X \subset \mb{C}^{n+1}$ that has a well-defined tangent space at each point.
If $X$ has a singularity at a point $x$, we consider a sequence of regular points $(x_m) \subseteq X$ converging to $x$.

Let $PT^*\mb{C}^{n+1}$ be the projectivized cotangent bundle of $\mb{C}^{n+1}$.
If $(z,[\omega]) \in PT^*\mb{C}^{n+1}$, we equip $PT^*\mb{C}^{n+1}$ with the contact structure given by the differential form
	\[\alpha=\omega_1\,dz^1+\dots+\omega_{n+1}\,dz^{n+1}\]
A holomorphic $F\colon N \subset \mb{C}^n \to PT^*\mb{C}^{n+1}$ is \textbf{integral} if $F^*\alpha=0$.
We also say a projection $\pi\colon PT^*\mb{C}^{n+1} \to \mb{C}^{n+1}$ is a \textbf{Legendrian fibration} for $\alpha$ if $\ker d\pi_{(z,[\omega])} \subseteq \ker\alpha_{(z,[\omega])}$ for all $(z,[\omega])\in PT^*\mb{C}^{n+1}$.

\begin{definition}
	A holomorphic $f\colon N \subset \mb{C}^n \to \mb{C}^{n+1}$ is \textbf{frontal} if there exists an integral $F\colon N \to PT^*\mb{C}^{n+1}$ and a Legendrian projection $\pi\colon PT^*\mb{C}^{n+1} \to \mb{C}^{n+1}$ such that
		\[f=\pi \circ F\]
	If $F$ is an immersion, we say $f$ is a \textbf{wavefront}.
	Similarly, a hypersurface $X \subset \mb{C}^{n+1}$ is frontal (resp. a front) if there exists a frontal (resp. wavefront) $f\colon N \to \mb{C}^{n+1}$ such that $X=f(N)$.
\end{definition}

Let $(z,,p)$ be coordinates for $PT^*\mb{C}^{n+1}$, and $\pi\colon PT^*\mb{C}^{n+1} \to \mb{C}^{n+1}$ be the canonical projection, $\pi(z,p)=z$.
It is easy to see that $\pi$ is a Legendrian fibration for $\alpha$.
If $F\colon N \to PT^*\mb{C}^{n+1}$ is an integral map and $f=\pi\circ F$,
	\[0=F^*\alpha=\sum^{n+1}_{i=1}\nu_i d(Z_i\circ F)=\sum^{n+1}_{i=1}\sum^n_{j=1}\nu_i\frac{\p f_i}{\p x_j}\,dx^j\]
for some $\nu_1,\dots,\nu_{n+1} \in \ms{O}_n$, not all of them zero (since they come from a projective space).
This is the same as claiming that there exists a nowhere-vanishing $\nu\colon N \to T^*\mb{C}^{n+1}$ such that, for all vector fields $\xi$ on $N$,
	\[\nu(df\circ \xi)=0\]

Since $PT^*\mb{C}^{n+1}$ is a fibre bundle, we can find for each pair $(z,[\omega]) \in PT^*\mb{C}^{n+1}$ an open neighbourhood $Z \subset \mb{C}^{n+1}$ of $z$ and an open $U \subseteq \mb{C}P^{n+1}$ such that $\pi^{-1}(Z)\cong Z\times U$.
Therefore, the integral map $F$ is $\ms{A}$-equivalent to $\tilde f(x)=(f(x),[\nu_x])$, known as the \textbf{Nash lift} of $f$.

We shall denote the family of differential $1$-forms $\nu$ along $f$ that vanish nowhere as $\Omega^1_f(U)$, and the family of germs of those as $\Omega^1_f(\mb{C}^n,0)$.

\begin{example}
	\begin{enumerate}
		\item Every analytic plane curve is frontal: given an analytic $\gamma(x)=(p(x),q(x))$, let $k=\min\{\ord p,\ord q\}$.
		The vector field $\gamma'$ is orthogonal to the nowhere-vanishing differential form
			\[\nu_x=\frac{1}{x^k}(q'(x)\,dX-p'(x)\,dY)\]

		\item The folded Whitney umbrella can be parametrized as
		\begin{function}
			f\colon (\mb{C}^2,0) \arrow[r] & (\mb{C}^3,0)\\
			(x,y) \arrow[r, maps to] & (x,y^2,xy^3)
		\end{function}
		If $\star\colon\Omega^2(f) \to \Omega^1(f)$ is the Hodge dual over germs of differential forms along $f$ and $(X,Y,Z)$ are coordinates for $\mb{C}^3$,
			\[\star\left(\frac{\p f}{\p x}\wedge\frac{\p f}{\p y}\right)=-2y^4\,dX-3xy^2\,dY+2y\,dZ=-y\nu_{(x,y)}\]
		and thus $\nu_{(x,y)}$ is a germ of $1$-form along $f$ that vanishes nowhere.
	\end{enumerate}
\end{example}

\begin{example}\label{ex: f4 mond}
	The $F_4$ singularity is described by Mond \cite{Mond_Classification} as the map-germ $f\colon (\mb{C}^2,0) \to (\mb{C}^3,0)$ given by
		\[f(x,y)=(x,y^2,y^5+x^3y)\]
	We wish to know whether $f$ is a frontal germ.

	Let $\nu\colon (\mb{C}^2,0) \to T^*\mb{C}^3$ be a germ of $1$-form along $f$ such that $\nu(df\circ \xi)=0$ for all $\xi \in \theta_2$.
	Since $\im df$ has dimension $2$ almost everywhere, there exists a $\mu\in \ms{O}_2$ such that
		\[\mu(x,y)\nu_{(x,y)}=\star\left(\frac{\p f}{\p x}\wedge\frac{\p f}{\p y}\right)(x,y)=2y^2\, dX + (5y^4+x^3)\, dY-2y\,dZ\]
	
	Assume there is a $\nu\in \Omega^1_f(\mb{C}^2)$ such that $\nu(df\circ \xi)=0$ for all vector fields $\xi$ on $U$.
	If $\nu=\nu_1\,dX+\nu_2\,dY+\nu_3\,dZ$,
	\begin{align*}
		\mu\nu_1=2y^2;&& \mu\nu_2=5y^4+x^3;&& \mu\nu_3&=-2y
	\end{align*}
	for some function $\mu$.
	However, it is clear that $\mu$ is a unit, so $\nu$ vanishes at $0$ and $f$ is not a frontal.
\end{example}

During the rest of this article, we shall assume all frontals are generically immersive.

\begin{definition}
	A smooth multigerm $f\colon (\mb{C}^n,S) \to \mb{C}^{n+1}$ is \textbf{frontal} if it has a frontal representative $f\colon N \to Z$.
	Given a hypersurface $X \subset \mb{C}^{n+1}$, $(X,0)$ is a \textbf{frontal} if there exists a frontal $f\colon (\mb{C}^n,S) \to (\mb{C}^{n+1},0)$ such that $(X,0)=f(\mb{C}^n,S)$.
\end{definition}

We now give a characterization for corank $1$ frontal map-germs:
\begin{proposition}[\cite{Nuno_CuspsAndNodes}]\label{prop: corank 1 frontal}
	Let $f\colon (\mb{K}^n,0) \to (\mb{K}^{n+1},0)$ be a corank $1$ mono-germ.
	Choose local coordinates $(x_1,\dots,x_{n-1},y)$ in the source and $(Y_1,\dots,Y_{n+1})$ in the target such that
	\begin{equation}\label{eq: prenormal corank 1}
		f(x,y)=(x,p(x,y),q(x,y))
	\end{equation}
	for some $p,q \in \ms{O}_n$.
	Then $f$ is frontal if and only if $p_y|q_y$ or $q_y|p_y$.
\end{proposition}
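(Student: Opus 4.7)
The plan is to reduce the frontal condition to a concrete algebraic equation on the components of a $1$-form $\nu$ along $f$, and then to analyse that equation using the UFD structure of $\ms{O}_n$.

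First, I would write $\nu=\nu_1\,dY_1+\dots+\nu_{n+1}\,dY_{n+1}$ along $f$ and compute $\nu(df\circ\xi)$ for $\xi=\p/\p x_j$ ($j=1,\dots,n-1$) and $\xi=\p/\p y$ using the prenormal form (\ref{eq: prenormal corank 1}). This produces the system
\[\nu_j=-\nu_n\,p_{x_j}-\nu_{n+1}\,q_{x_j}\ \ (j=1,\dots,n-1),\qquad \nu_n\,p_y+\nu_{n+1}\,q_y=0,\]
where the first $n-1$ equations determine $\nu_1,\dots,\nu_{n-1}$ in terms of the pair $(\nu_n,\nu_{n+1})$. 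Hence $f$ is frontal if and only if there exists $(\nu_n,\nu_{n+1})\in\ms{O}_n^2$ solving the last equation for which the resulting $\nu$ does not vanish at $0$ (and therefore, by continuity, in a whole neighbourhood).

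For the ``if'' direction, if $q_y=h\,p_y$ for some $h\in\ms{O}_n$, the pair $(\nu_n,\nu_{n+1})=(h,-1)$ solves the divisibility equation, and since $\nu_{n+1}=-1$ is a unit the resulting $\nu$ is nowhere vanishing; the case $q_y\mid p_y$ is completely symmetric, giving a $\nu$ with unit $\nu_n$. For the converse, I would use that the regular local ring $\ms{O}_n$ is a UFD to factor $p_y=d\,a$, $q_y=d\,b$ with $d=\gcd(p_y,q_y)$ and $\gcd(a,b)=1$. The equation $\nu_n a+\nu_{n+1}b=0$ together with coprimality forces $\nu_{n+1}=a\,c$ and $\nu_n=-b\,c$ for some $c\in\ms{O}_n$, and substituting back shows that $c$ divides every component of $\nu$. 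Hence $\nu(0)\neq 0$ forces $c$ to be a unit; inspecting $\nu_n(0)$ and $\nu_{n+1}(0)$ then forces $(a(0),b(0))\neq(0,0)$, so one of $a,b$ is a unit in $\ms{O}_n$, yielding $p_y\mid q_y$ or $q_y\mid p_y$.

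The main obstacle I expect is the converse direction, and in particular the treatment of degenerate cases in which $p_y$ or $q_y$ vanishes identically: there the gcd argument degenerates but the divisibility claim becomes trivial, and one must verify separately that such $f$ still admits a nowhere-vanishing $\nu$. A secondary concern is keeping the UFD manipulation strictly inside the local ring $\ms{O}_n$ rather than passing prematurely to its field of fractions, since the nowhere-vanishing condition on $\nu$ is sensitive to unit status in $\ms{O}_n$ and not merely to nonvanishing as a rational expression.
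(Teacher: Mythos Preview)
The paper does not actually prove this proposition: it is quoted from \cite{Nuno_CuspsAndNodes} without argument, so there is no in-paper proof to compare against. Your proposal is therefore being assessed on its own merits, and it is correct and complete. The reduction to the single equation $\nu_n p_y+\nu_{n+1}q_y=0$ with $(\nu_n(0),\nu_{n+1}(0))\neq(0,0)$ is exactly right (the first $n-1$ equations force $\nu_j(0)=0$ whenever $\nu_n(0)=\nu_{n+1}(0)=0$, so nonvanishing of $\nu$ at $0$ is equivalent to nonvanishing of its last two components), and the UFD argument via $\gcd(p_y,q_y)$ is the standard and cleanest way to extract the divisibility.

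Two minor remarks. First, you can shorten the end of the converse: once $\nu_n=-bc$ and $\nu_{n+1}=ac$, the condition $(\nu_n(0),\nu_{n+1}(0))\neq(0,0)$ already forces $c(0)\neq 0$ \emph{and} $(a(0),b(0))\neq(0,0)$ simultaneously, so there is no need to check that $c$ divides the remaining components $\nu_1,\dots,\nu_{n-1}$. Second, the degenerate case you flag is harmless in both directions: if $p_y\equiv 0$ then $q_y\mid p_y$ holds trivially, and on the frontal side one may take $(\nu_n,\nu_{n+1})=(1,0)$ when $q_y\not\equiv 0$ (forcing $\nu_{n+1}=0$) or any nonzero constant pair when $q_y\equiv 0$ as well.
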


We shall say that $f$ is in \textbf{prenormal form} if it is given as in Equation \ref{eq: prenormal corank 1} with $p_y|q_y$, in which case we set $\mu=q_y/p_y$.
We now state a series of definitions and results that we shall use throughout this paper.
Proofs for these statements can be found in \cite{Frontals}.

\begin{proposition}
	Two frontal multi-germs $f,g\colon (\mb{C}^n,S) \to (\mb{C}^{n+1},0)$ are $\ms{A}$-equivalent if and only if their Nash lifts $\tilde f, \tilde g$ are Legendrian equivalent.
\end{proposition}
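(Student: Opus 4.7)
The plan is to verify that the Nash-lift construction is functorial with respect to $\ms{A}$-equivalence in a manner compatible with the Legendrian fibration structure on $PT^*\mb{C}^{n+1}$, and conversely that any contactomorphism preserving the projection $\pi$ descends to a diffeomorphism of $\mb{C}^{n+1}$. Together these two facts give the claimed bijection between the two notions of equivalence, so the work reduces to propagating the relation $g=\psi\circ f\circ\phi^{-1}$ up and down along $\pi$.

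For the forward direction, given an $\ms{A}$-equivalence $g=\psi\circ f\circ\phi^{-1}$ I would define the canonical lift
\[
\Psi\colon PT^*\mb{C}^{n+1}\to PT^*\mb{C}^{n+1},\qquad \Psi(z,[\omega])=\bigl(\psi(z),[(\psi^{-1})^*\omega]\bigr),
\]
check that $\pi\circ\Psi=\psi\circ\pi$ (so $\Psi$ preserves the Legendrian fibration) and that $\Psi^*\alpha$ differs from $\alpha$ by a nonvanishing factor (so $\Psi$ is a contactomorphism). The identity $\Psi\circ\tilde f\circ\phi^{-1}=\tilde g$ then follows from the projective uniqueness of the Nash lift: the first coordinate matches because $\psi(f(x))=g(\phi(x))$, and for the second, if $\nu_x$ annihilates $df_x$ then $(\psi^{-1})^*\nu_x$ annihilates $dg_{\phi(x)}=d\psi_{f(x)}\circ df_x\circ d\phi^{-1}_{\phi(x)}$, so the two projective classes of nowhere-vanishing $1$-forms coincide.

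Conversely, suppose $\tilde g=\Psi\circ\tilde f\circ\phi^{-1}$ with $\Psi$ a contactomorphism preserving the Legendrian fibration $\pi$. Because $\Psi$ sends $\pi$-fibres to $\pi$-fibres, it descends to a unique germ of diffeomorphism $\psi$ of the base satisfying $\pi\circ\Psi=\psi\circ\pi$; the inverse is supplied by $\Psi^{-1}$. Composing the Legendrian equivalence with $\pi$ and using $\pi\circ\tilde f=f$ and $\pi\circ\tilde g=g$ yields $g=\psi\circ f\circ\phi^{-1}$, giving the desired $\ms{A}$-equivalence.

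The main obstacle I expect is pinning down the precise germ-theoretic definition of Legendrian equivalence used in \cite{Frontals}, since this determines exactly which class of contactomorphisms $\Psi$ is admissible, and then justifying that a fibration-preserving contactomorphism germ at a multi-point of the Nash image really does descend to $\mb{C}^{n+1}$. Everything else is a direct computation: the pushforward assignment $\omega\mapsto(\psi^{-1})^*\omega$ is standard, the check that $\Psi$ is a contactomorphism preserving $\pi$ reduces to the chain rule, and the compatibility $\Psi\circ\tilde f\circ\phi^{-1}=\tilde g$ is immediate from the defining property of $\nu$.
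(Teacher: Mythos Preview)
The paper does not actually contain a proof of this proposition: it is listed among several results whose proofs are deferred to the forthcoming paper \cite{Frontals} (see the sentence immediately preceding the proposition). So there is no in-paper argument to compare your proposal against.

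That said, your outline is the standard argument and is correct. The canonical lift $\Psi(z,[\omega])=(\psi(z),[(\psi^{-1})^*\omega])$ is exactly the induced contactomorphism of $PT^*\mb{C}^{n+1}$ coming from a diffeomorphism of the base; it satisfies $\pi\circ\Psi=\psi\circ\pi$ automatically and pulls back the tautological form to itself (hence preserves the contact structure projectively). Your verification that $(\psi^{-1})^*\nu_x$ annihilates $dg_{\phi(x)}$ is the right computation for matching the Nash lifts. In the converse direction, the only content is that ``Legendrian equivalence'' in Ishikawa's sense (which the introduction indicates is the framework adopted in \cite{Frontals}) requires the contactomorphism to preserve the Legendrian fibration $\pi$; once that is granted, descent to a base diffeomorphism $\psi$ is immediate, and composing with $\pi$ recovers the $\ms{A}$-equivalence. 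The obstacle you flag---knowing the precise definition of Legendrian equivalence---is the only genuine gap, and it is resolved by the fibration-preserving hypothesis built into that definition.
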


\begin{corollary}\label{cor: frontals preserved under A}
	Let $f,g\colon (\mb{C}^n,S) \to \mb{C}^{n+1}$ be smooth germs.
	If $f$ is $\ms{A}$-equivalent to $g$ and $f$ is frontal, $g$ is frontal.
\end{corollary}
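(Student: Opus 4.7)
My plan is to prove this directly by constructing a nowhere-vanishing integrating $1$-form for $g$ out of one for $f$, using the naturality of the condition $\nu(df\circ\xi)=0$ under source and target diffeomorphisms.

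Since $f$ and $g$ are $\mathscr{A}$-equivalent, write $g=\psi\circ f\circ\phi^{-1}$ for diffeomorphism germs $\phi$ of the source and $\psi$ of the target. Since $f$ is frontal, the discussion following the definition gives a germ $\nu\in\Omega^1_f(\mathbf{C}^n,S)$, i.e.\ a nowhere-vanishing $1$-form along $f$, such that $\nu(df\circ\xi)=0$ for every $\xi\in\theta_n$. The task is to exhibit some $\nu'\in\Omega^1_g(\mathbf{C}^n,S')$ with the analogous property, since then the Nash lift $\tilde g(y)=(g(y),[\nu'_y])$ furnishes an integral lift of $g$ through the canonical Legendrian projection $\pi$.

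The natural candidate is obtained by transporting $\nu$ through the equivalence. For each $y$ in the domain of $g$, set
\[
\nu'_y \;=\; \nu_{\phi^{-1}(y)}\circ\bigl(d\psi_{f(\phi^{-1}(y))}\bigr)^{-1}\in T^*_{g(y)}\mathbf{C}^{n+1}.
\]
Since $(d\psi)^{-1}$ is a linear isomorphism at each point and $\nu$ never vanishes, $\nu'$ never vanishes either. Using the chain rule $dg_y = d\psi_{f(\phi^{-1}(y))}\circ df_{\phi^{-1}(y)}\circ d\phi^{-1}_y$, for any vector field $\eta\in\theta_n$ we compute
\[
\nu'_y(dg_y\circ\eta_y) \;=\; \nu_{\phi^{-1}(y)}\bigl(df_{\phi^{-1}(y)}\circ d\phi^{-1}_y(\eta_y)\bigr) \;=\; \nu(df\circ\xi)\bigl|_{\phi^{-1}(y)}\;=\;0,
\]
where $\xi = d\phi^{-1}(\eta)\circ\phi$ is the pushed-forward vector field on the source of $f$. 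Thus $\nu'\in\Omega^1_g$ satisfies the integrating condition, so $g$ is frontal.

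No step here should be a real obstacle: the only thing to watch is the book-keeping of base points and the observation that the frontal condition $\nu(df\circ\xi)=0$ is purely tensorial, so it is preserved by arbitrary diffeomorphic reparametrisations on either side. One could equivalently phrase the argument upstairs by lifting $\psi$ to the contact diffeomorphism $\Psi$ of $PT^*\mathbf{C}^{n+1}$ induced by $(d\psi^{-1})^{*}$ on covectors and setting $\tilde g = \Psi\circ\tilde f\circ\phi^{-1}$, which gives the same conclusion via $\Psi^*\alpha = h\alpha$ for a nowhere-vanishing $h$; the down-to-earth $1$-form computation above is the cleaner route.
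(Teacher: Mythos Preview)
Your argument is correct. The paper itself gives no proof here---it defers to \cite{Frontals}---but positions the statement as a corollary of the preceding proposition that $\mathscr{A}$-equivalence of frontals corresponds to Legendrian equivalence of their Nash lifts. That route is precisely the one you sketch in your closing remark: lift $\psi$ to the induced contactomorphism $\Psi$ of $PT^*\mathbb{C}^{n+1}$ and take $\tilde g=\Psi\circ\tilde f\circ\phi^{-1}$. Your main argument instead stays downstairs and transports the integrating $1$-form directly, which is more elementary and entirely self-contained (no appeal to the contact geometry of $PT^*\mathbb{C}^{n+1}$ or to external results). The two approaches are equivalent---your $\nu'$ is exactly the covector part of the Nash lift $\tilde g$ produced by the Legendrian route---so the difference is one of packaging rather than substance.
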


Given a frontal $f\colon (\mb{C}^n,S) \to (\mb{C}^{n+1},0)$, we define the space of infinitesimal frontal deformations of $f$ as
	\[\ms{F}(f):=\left\{\left.\frac{df_t}{dt}\right|_{t=0}: f_0=f, (f_t,t) \text{ frontal}\right\}\]
Using Corollary \ref{cor: frontals preserved under A}, we see that the $\ms{A}$-orbit of $f$ is contained within the space of frontal mappings $(\mb{C}^n,S) \to (\mb{C}^{n+1},0)$.
It is then clear that $T\ms{A}_ef \subseteq \ms{F}(f)$, and thus we can assign a frontal codimension to $f$ by computing the codimension of $T\ms{A}_ef$ in $\ms{F}(f)$:

\begin{definition}
	Let $f\colon (\mb{C}^n,S) \to (\mb{C}^{n+1},0)$ be a frontal germ.
	We define the \textbf{frontal codimension} or $\ms{F}$-codimension of $f$ as
		\[\codim_{\ms{F}e}(f):=\dim_\mb{C}\frac{\ms{F}(f)}{T\ms{A}_ef}\]
	We say $f$ is $\ms{F}$-finite if $\codim_{\ms{F}_e}(f) < \infty$.
\end{definition}

Given a smooth $f\colon (\mb{C}^n,S) \to (\mb{C}^{n+1},0)$ (not necessarily frontal), we shall say $f$ has \textbf{corank $1$} if there exists a representative $f\colon N \to Z$ of $f$ such that the dimension of $\ker df_x$ is less than or equal to $1$ for all $x \in S$.
Note that this definition includes germs with immersive branches, such as the transverse intersection of $n$ hyperplanes.

\begin{definition}
	Let $f\colon (\mb{C}^n,S) \to (\mb{C}^{n+1},0)$ be a frontal multi-germ.
	We say $f$ is stable as a frontal or $\ms{F}$-stable if every $d$-parameter frontal unfolding $F$ of $f$ is $\ms{A}$-equivalent to $f\times\id_{(\mb{C}^d,0)}$.	
\end{definition}

\begin{theorem}
	Let $f\colon (\mb{C}^n,0) \to (\mb{C}^{n+1},0)$ be a generically immersive frontal germ of corank $1$.
	Then $f$ is stable as a frontal if and only if $\ms{F}(f)=T\ms{A}_ef$.
\end{theorem}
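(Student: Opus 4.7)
The plan is to model the proof on the classical Mather criterion that $\ms{A}$-stability is equivalent to $T\ms{A}_ef=\theta(f)$, only now the role of the full module $\theta(f)$ is played by the subspace $\ms{F}(f)$ of infinitesimal frontal deformations. The bridge that makes this adaptation possible is the proposition relating $\ms{A}$-equivalence of frontals to Legendrian equivalence of their Nash lifts, which lets one transport the question to Ishikawa's setting for integral maps — where the corank 1 assumption becomes essential.

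The easy direction is to show $\ms{F}$-stability implies $\ms{F}(f)=T\ms{A}_ef$. Pick $v\in\ms{F}(f)$; by the very definition of $\ms{F}(f)$ there is a one-parameter frontal deformation $f_t$ of $f$ with $\frac{df_t}{dt}|_{t=0}=v$. Packaging it as a frontal unfolding $F(x,t)=(f_t(x),t)$ and invoking $\ms{F}$-stability produces an $\ms{A}$-trivialisation $F=(\psi_t\times\id)\circ(f\times\id)\circ(\phi_t\times\id)^{-1}$. Differentiating this identity at $t=0$ displays $v$ as an element of $tf(\theta_n)+\omega f(\theta_{n+1})=T\ms{A}_ef$.

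For the converse one must show that the infinitesimal condition $T\ms{A}_ef=\ms{F}(f)$ promotes to stability of every frontal unfolding. My plan is to lift a given $d$-parameter frontal unfolding $F$ of $f$ to an integral unfolding $\widetilde F$ of the Nash lift $\tilde f$, and to translate the hypothesis $\ms{F}(f)=T\ms{A}_ef$ into Ishikawa's infinitesimal Legendrian stability condition for $\tilde f$. Concretely, infinitesimal frontal deformations of $f$ correspond bijectively to infinitesimal integral deformations of $\tilde f$ modulo the tangent directions of $\pi$, and the $T\ms{A}_e$-piece on both sides matches under this correspondence; so $T\ms{A}_ef=\ms{F}(f)$ yields exactly the condition needed to apply Ishikawa's stability theorem from \cite{Ishikawa_Legendrian}. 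That theorem then gives a Legendrian equivalence between $\widetilde F$ and $\tilde f\times\id$, which by the $\ms{A}$/Legendrian correspondence descends to an $\ms{A}$-equivalence between $F$ and $f\times\id$, proving $\ms{F}$-stability.

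The main obstacle I expect is the second step of the converse: verifying precisely that the quotient $\ms{F}(f)/T\ms{A}_ef$ coincides with the Legendrian infinitesimal obstruction space that appears in Ishikawa's criterion. This comparison is where the corank 1 hypothesis really bites, because it ensures that the fibre of $\pi\colon PT^*\mb{C}^{n+1}\to\mb{C}^{n+1}$ over a singular point of $f$ is transverse to $\tilde f$ in a controlled way and that the Nash lift depends holomorphically and non-degenerately on $f$. Once this identification is in place, the rest of the argument is a direct translation of the classical infinitesimal-stability-implies-stability machinery into the Legendrian setting via Ishikawa's theorem.
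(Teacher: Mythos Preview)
The paper does not actually prove this theorem: it is listed among ``a series of definitions and results that we shall use throughout this paper'' with the explicit disclaimer that ``Proofs for these statements can be found in \cite{Frontals},'' a forthcoming paper. So there is no proof here to compare your proposal against.

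That said, your overall strategy is consistent with the hints the authors give in the introduction: they state that their approach to the Thom--Mather theory of corank~1 frontals is ``based on Ishikawa's work \cite{Ishikawa_Legendrian} about stability and infinitesimal deformations of integral mappings,'' and that ``the corank one assumption is a technical but necessary condition in order to be able to apply Ishikawa's theory.'' Your plan --- lift to the Nash map, identify $\ms{F}(f)/T\ms{A}_ef$ with Ishikawa's Legendrian obstruction space, and invoke his infinitesimal-stability-implies-stability theorem --- is therefore very likely the intended route. The easy direction you give is the standard argument and is fine.

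Where your proposal remains a genuine sketch is exactly where you flag it: the identification between $\ms{F}(f)/T\ms{A}_ef$ and the relevant quotient in Ishikawa's setting. This is not a formality. One has to check that infinitesimal frontal deformations of $f$ (as defined here, via frontal one-parameter unfoldings) correspond bijectively to infinitesimal integral deformations of $\tilde f$ modulo the vertical directions of the Legendrian fibration, and that the $\ms{A}$-tangent space downstairs matches the Legendrian-equivalence tangent space upstairs. You also need that a frontal unfolding of $f$ lifts to an integral unfolding of $\tilde f$ in the appropriate sense, and conversely that a Legendrian trivialisation of $\widetilde F$ descends to an $\ms{A}$-trivialisation of $F$; the proposition on $\ms{A}$-equivalence versus Legendrian equivalence handles germs, but the unfolding-level statement requires its own argument. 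None of this is implausible, but it is where the actual work lies, and your proposal acknowledges rather than resolves it.
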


Given a frontal $f\colon (\mb{C}^n,S)\to (\mb{C}^{n+1},0)$, we define the vector subspace $\hat\tau(f) \subseteq T_0\mb{C}^{n+1}$ as follows:
	\[\hat\tau(f)=\mathrm{ev}\left\{\omega f^{-1}[tf(\theta_{n,S})+(f^*\mfm_{n+1,0})\ms{F}(f)]\right\}\]
where $\mathrm{ev}\colon \theta_{n+1} \to T_0\mb{C}^{n+1}$ denotes evaluation at $0$.

\begin{proposition}
	A frontal multi-germ $f\colon (\mb{C}^n,S) \to (\mb{C}^{n+1},0)$ with branches $f_1,\dots,f_r$ is stable as a frontal if and only if $f_1,\dots,f_r$ are stable as frontals and the vector subspaces $\hat\tau(f_1),\dots,\hat\tau(f_r)\subseteq T_0\mb{K}^{n+1}$ meet in general position.
\end{proposition}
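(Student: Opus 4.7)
The plan is to follow Mather's classical argument for multigerm $\mathscr{A}$-stability, adapting each step to the frontal category. By the preceding theorem, a monogerm $f_i$ is $\mathscr{F}$-stable if and only if $\mathscr{F}(f_i)=T\ms{A}_ef_i$, so the characterisation of stability by an infinitesimal equality is already in place. The task is therefore to show that the same infinitesimal equality for the multigerm $f$ decomposes into the branchwise equalities plus the general position condition on the $\hat\tau(f_i)$.

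The first step is to record the decomposition $\ms{F}(f)=\bigoplus_i \ms{F}(f_i)$, which follows from the fact that the source splits as the disjoint union of germs at the $x_i\in S$, together with $tf(\theta_{n,S})=\bigoplus_i tf_i(\theta_{n,x_i})$. The obstruction to a full branchwise decomposition of $T\ms{A}_ef$ is $\omega f(\theta_{n+1})$, whose image in $\bigoplus_i\ms{F}(f_i)$ is the \emph{diagonal} copy of $\theta_{n+1,0}$ acting simultaneously on every branch. Thus one rewrites
\[
\frac{\ms{F}(f)}{T\ms{A}_ef}\;\cong\;\coker\!\left(\theta_{n+1,0}\longrightarrow\bigoplus_i\frac{\ms{F}(f_i)}{tf_i(\theta_{n,x_i})}\right),
\]
where the map sends $\eta$ to $(\eta\circ f_i)_i$.

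For the forward implication, any frontal unfolding of a single $f_i$ extends by the identity on the remaining branches to a frontal unfolding of $f$; $\ms{F}$-stability of $f$ then forces each $f_i$ to be $\ms{F}$-stable. Given this, each quotient $\ms{F}(f_i)/tf_i(\theta_{n,x_i})$ equals $\omega f_i(\theta_{n+1,0})$, and one applies a preparation-type argument to the finite map $f$ to reduce the cokernel above to a purely $\mb{C}$-linear cokernel of $T_0\mb{C}^{n+1}\to\bigoplus_i T_0\mb{C}^{n+1}/\hat\tau(f_i)$. Surjectivity of the latter is precisely the statement that the subspaces $\hat\tau(f_i)$ meet in general position. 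The converse runs the same argument in reverse: general position gives $\mb{C}$-linear surjectivity at the origin, and preparation upgrades this to the module equality $\ms{F}(f)=T\ms{A}_ef$, whence $f$ is $\ms{F}$-stable by the theorem cited above.

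The main obstacle is establishing the frontal preparation theorem that allows the passage between the $\mb{C}$-linear surjectivity at $0$ and the module equality. In the classical $\ms{A}$-setting one uses Malgrange preparation on $\theta(f)$ regarded as an $\ms{O}_{n+1}$-module via $f^*$; here the same input is needed for $\ms{F}(f)$, which is not \emph{a priori} finitely generated as an $f^*\ms{O}_{n+1}$-module. The verification proceeds through the explicit corank one description of $\ms{F}(f)$ (using $p_y\mid q_y$ and the generator $\mu$ from Proposition \ref{prop: corank 1 frontal}), from which one checks that $\ms{F}(f)$ is a coherent $f^*\ms{O}_{n+1}$-module supported on the image; with this in hand Malgrange preparation applies verbatim. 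This is the technical heart of the argument and is carried out in detail in \cite{Frontals}.
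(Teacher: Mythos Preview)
The paper does not actually prove this proposition: immediately before stating it, the authors write ``Proofs for these statements can be found in \cite{Frontals}'', and no argument is given in the present paper. So there is no in-paper proof to compare your proposal against.

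Your sketch is the natural adaptation of Mather's classical multigerm argument to the frontal setting, and it is almost certainly what \cite{Frontals} does. Two small points are worth flagging. First, the infinitesimal criterion you invoke at the very end---that $\ms{F}(f)=T\ms{A}_ef$ implies $\ms{F}$-stability---is stated in this paper only for \emph{monogerms} $(\mb{C}^n,0)\to(\mb{C}^{n+1},0)$; the multigerm version you need is itself part of the material deferred to \cite{Frontals}, so your final sentence is slightly circular as written. Second, your identification of the obstacle (that $\ms{F}(f)$ must be shown to be finitely generated over $f^*\ms{O}_{n+1}$ for preparation to apply) is correct and is exactly the sort of technical point the authors postpone; note, however, that the corank one hypothesis is essential here, and the paper's statement of the proposition does not explicitly carry that hypothesis, so either it is implicit from context or the argument in \cite{Frontals} handles it differently.
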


We now state an adaptation of the Mather-Gaffney criterion for frontal germs:
\begin{theorem}
\label{thm: frontal mather gaffney}
	Let $f\colon (\mb{C}^n,S) \to (\mb{C}^{n+1},0)$ be a corank $1$ frontal map-germ.
	If $n=1$ or $V(p_y,\mu_y)=S$ as set-germs, $f$ is $\ms{F}$-finite if and only if there exists a representative $f\colon N' \to Z'$ of $f$ such that
	\begin{enumerate}
		\item $f^{-1}(0)\subseteq S$;
		\item the restriction $f\colon N' \backslash \{0\} \to Z'\backslash \{0\}$ is locally stable as a frontal.
	\end{enumerate}
\end{theorem}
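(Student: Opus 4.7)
The plan is to adapt the classical Mather--Gaffney strategy for $\ms{A}_e$-finiteness to the frontal category, by promoting the quotient $\ms{F}(f)/T\ms{A}_ef$ to a coherent sheaf $\mc{N}$ of $\ms{O}_Z$-modules in a neighbourhood of the origin. Once this is set up, the equivalence reduces to the standard dictionary: for a coherent analytic sheaf, the stalk at a point is finite-dimensional over $\mb{C}$ if and only if a neighbourhood of that point meets the support only at that point. The preceding theorem characterising $\ms{F}$-stability by the equality $\ms{F}(f) = T\ms{A}_ef$ ensures that the support of $\mc{N}$ coincides precisely with the locus of non-$\ms{F}$-stable points, so both implications become a comparison between the support and the stalk of $\mc{N}$.

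For the $(\Leftarrow)$ direction, the hypothesis $f^{-1}(0) \subseteq S$ makes $f$ finite at $S$ by Theorem~\ref{thm: finiteness criterion}, so the pushforward machinery is available and $\mc{N}$ is coherent on a neighbourhood of $0$. Pointwise $\ms{F}$-stability off the origin forces $\mc{N}_z = 0$ for every $z \neq 0$, so the support of $\mc{N}$ is contained in $\{0\}$ and coherence gives $\dim_\mb{C} \mc{N}_0 < \infty$.

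For the $(\Rightarrow)$ direction, start from $\codim_{\ms{F}_e}(f) < \infty$ and produce a representative with the required properties. Finiteness $f^{-1}(0) \subseteq S$ is obtained by a curve selection argument: if an analytic arc $\gamma(t)$ with $\gamma(0) \notin S$ satisfied $f \circ \gamma \equiv 0$, one could construct a $\mb{C}$-linearly independent family of frontal infinitesimal deformations not absorbed by $T\ms{A}_ef$, contradicting $\ms{F}$-finiteness. Once $f$ is finite in a representative, $\mc{N}$ is coherent, and its stalk at $0$ is annihilated by a power of the maximal ideal because it has finite $\mb{C}$-dimension; coherence then propagates this annihilation, so $\mc{N}$ vanishes on a punctured neighbourhood and pointwise $\ms{F}$-stability holds there. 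The technical hypothesis $V(p_y, \mu_y) = S$ (vacuous for $n=1$) enters to guarantee that the folded Whitney umbrella locus does not accumulate away from $S$, so that the only frontal singular behaviour off $S$ is along cuspidal edges and their transverse self-intersections, both of which are $\ms{F}$-stable.

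The main obstacle is establishing coherence of $\mc{N}$ over $\ms{O}_Z$. Unlike the $\ms{A}_e$ case, where $\theta(f)/T\ms{A}_ef$ inherits a finitely presented $\ms{O}_{n+1}$-module structure directly through $\omega f$, here $\ms{F}(f)$ is cut out from $\theta(f)$ by the nontrivial requirement that the divisibility $p_y \mid q_y$ persist under deformation. Using the explicit prenormal form of Proposition~\ref{prop: corank 1 frontal}, one can describe $\ms{F}(f)$ as the kernel of an $\ms{O}_n$-linear map whose target is finitely presented and expressible in terms of the partial derivatives of $p$ and $\mu$; the condition $V(p_y, \mu_y) = S$ is exactly what ensures that this kernel is itself finitely generated and that its pushforward under the finite map $f$ is coherent. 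With coherence in place, the remainder of the proof parallels the classical Mather--Gaffney theorem, and the multi-germ version follows from the mono-germ statement together with the general-position criterion for $\hat\tau(f_1),\dots,\hat\tau(f_r)$ recalled above.
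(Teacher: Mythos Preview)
The paper does not actually prove Theorem~\ref{thm: frontal mather gaffney}. It is listed among the ``series of definitions and results that we shall use throughout this paper'' whose ``proofs \dots\ can be found in \cite{Frontals}'', a forthcoming companion paper. So there is no proof in the present paper to compare your proposal against.

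That said, your outline follows the expected shape of a Mather--Gaffney-type argument and is broadly plausible. A few points deserve tightening if you want this to stand on its own. First, your curve-selection argument for finiteness of $f$ in the $(\Rightarrow)$ direction is sketchy: in the classical $\ms{A}_e$ setting one shows directly that $\ms{A}_e$-finiteness implies $\ms{K}$-finiteness, hence finiteness of $f$; you would want an analogous algebraic step here rather than an ad hoc construction of infinitely many independent deformations. Second, your description of the role of the hypothesis $V(p_y,\mu_y)=S$ is not quite right: this condition is not about the folded Whitney umbrella locus accumulating, but rather is needed for Ishikawa's description of $\ms{F}(f)$ (the integral vector fields) to yield a coherent sheaf structure; in particular it guarantees that the Nash lift is an immersion off $S$, so that the frontal normal space $\ms{F}(f)/T\ms{A}_ef$ can be identified with a quotient coming from the Legendrian lift. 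Third, calling $\ms{F}(f)$ the ``kernel of an $\ms{O}_n$-linear map'' is imprecise; as noted in the paper (just after the definition of $K$ in the frontalisation section), $\ms{F}(\check f)$ is identified with a \emph{quotient} $\theta(\check f)/\ms{O}_2\{(0,0,y)\}$, and more generally the explicit description of $\ms{F}(f)$ in \cite{Frontals} is what makes the coherence verification go through.
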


In particular, if $f$ is a surface, we will later show (Proposition \ref{prop: finite frontal type}) that $f$ can only contain cuspidal edges and transversal double points outside $S$.

\subsection{Frontalisation of a fold surface}
Mond classified the $\ms{A}$-simple monogerms $(\mb{R}^2,0) \to (\mb{R}^3,0)$ in \cite{Mond_Classification}.
The resulting classification shows that all but one of the families (labelled $H_k$) is fold-type.
A surface $f\colon N^2 \to Z^3$ is fold-type if we can find local coordinates $(x,y)$ for $N$ and $(X,Y,Z)$ for $Z$ such that
	\[f(x,y)=(x,y^2,yp(x,y^2))\]
for some $p\colon N \to \mb{R}$.

Using the characterisation of corank $1$ frontal maps given in Proposition \ref{prop: corank 1 frontal}, it follows that $f$ is frontal if and only if $p(x,y)=yq(x,y)$, in which case $f(x,y)=(x,y^2,y^3q(x,y^2))$.
With this observation in mind, we can \emph{frontalise} the fold families from Mond's classification, leading to the results shown in Table \ref{table: mond simple} and Figure \ref{fig: mond simple}.

\begin{table}[ht]
\centering
\begin{tabular}{LLLL|CCC}
	\hline
	\multicolumn{2}{c}{Mond's classification}		& \multicolumn{2}{c}{Frontalised surface}	& \text{Codimension}	& \text{Notes}	\\
	\hline
	\hline
	\T S_k & y^3+x^{k+1}y							& \B \check{S}_k & y^5+x^{k+1}y^3			& k 					& 				\\
	\T B_k & x^2y+y^{2k+1}							& \B \check{B}_k & x^2y^3+y^{2k+3}			& k						& k \geq 2		\\
	\T C_k & xy^3+x^ky								& \B \check{C}_k & xy^5+x^ky^3				& k						& k \geq 3		\\
	\T F_4 & y^5+x^3y								& \B \check{F}_4 & y^7+x^3y^3 				& 4						&				
\end{tabular}
\caption{Simple fold surfaces from \cite{Mond_Classification} along with their frontal counterparts.
The $\ms{A}$-codimension of the former coincides with the $\ms{F}$-codimension of the latter.\label{table: mond simple}}
\end{table}

\begin{figure}[ht]
\centering
     \begin{subfigure}[b]{\textwidth}
         \centering
		\includegraphics[width=0.2\textwidth]{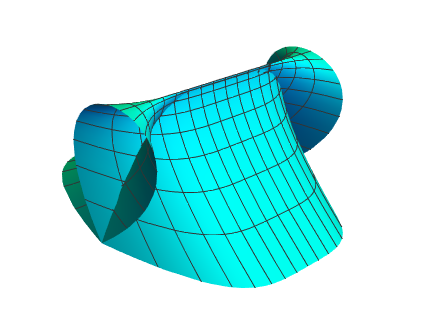}
		\includegraphics[width=0.2\textwidth]{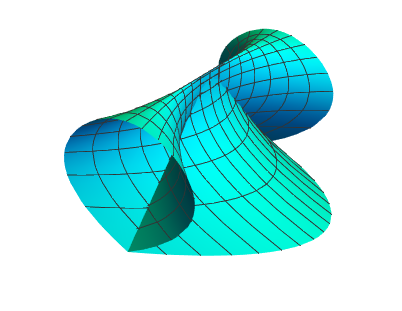}
		\includegraphics[width=0.2\textwidth]{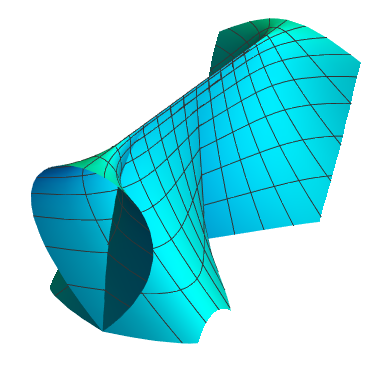}
		\includegraphics[width=0.2\textwidth]{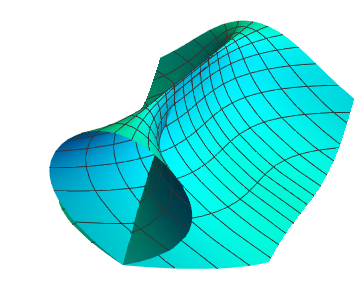}
         \caption{$S_k$, $B_k$, $C_k$, $F_4$.}
     \end{subfigure}\\
     \begin{subfigure}[b]{\textwidth}
         \centering
		\includegraphics[width=0.2\textwidth]{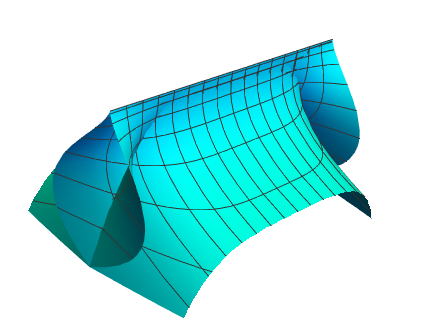}
		\includegraphics[width=0.2\textwidth]{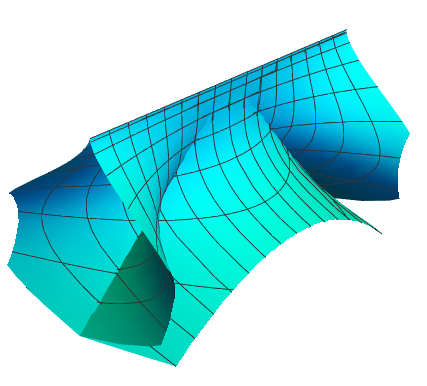}
		\includegraphics[width=0.2\textwidth]{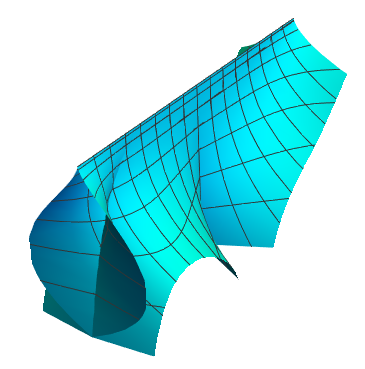}
		\includegraphics[width=0.2\textwidth]{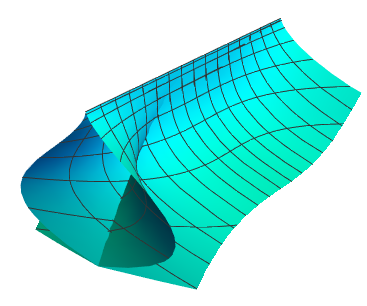}
         \caption{$\check{S}_k$, $\check{B}_k$, $\check{C}_k$, $\check{F}_4$.}
     \end{subfigure}
\caption{Fold surfaces from Mond's classification (top), along with their frontalised counterpart (bottom). \label{fig: mond simple}}
\end{figure}

We now define a precise definition of frontalisation.
\begin{definition}
	Let $f\colon (\mb{C}^2,S) \to (\mb{C}^3,0)$ be given by $f(x,y)=(x,y^2,yp(x,y^2))$.
	We define the \textbf{frontalisation} of $f$ as the fold surface $\check{f}\colon (\mb{C}^2,S) \to (\mb{C}^3,0)$ given by
		\[\check{f}(x,y)=(x,y^2,y^3p(x,y^2))\]
\end{definition}

Note that none of the frontal surfaces in Table \label{table: mond simple} is a wavefront.
More generally, if $\check{f}$ is a wavefront, the function
    \[\cvf{y}\left(y^3p(x,y^2)\right)=3y^2p(x,y^2)+2y^4p_y(x,y^2)\]
has to be in the form $\alpha(x)+\beta y^2$ for some $\alpha \in \ms{O}_1$, $\beta \in \mb{C}$ (so that its Nash lift is an immersion).
However, this can only happen if $p_y(x,y)=0$, in which case $\check{f}$ is either an immersion or a cuspidal edge.

The goal of this section is to prove the following
\begin{theorem}\label{thm: frontalisation preserves codimension}
	Given a fold surface $f\colon (\mb{C}^2,0) \to (\mb{C}^3,0)$ with frontalisation $\check{f}$,
		\[\frac{\theta(f)}{T\ms{A}_ef}\cong \frac{\ms{F}(\check{f})}{T\ms{A}_e\check{f}}\]
	In particular, $f$ is $\ms{A}$-finite if and only if $\check{f}$ is $\ms{F}$-finite and $\codim_{\ms{A}}(f)=\codim_{\ms{F}}(\check{f})$.
\end{theorem}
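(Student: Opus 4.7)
The plan is to build an explicit $\mb{C}$-linear map
\[
\Psi\colon\theta(f)\to\ms{F}(\check{f}),\qquad \Psi(a\cvf{X}+b\cvf{Y}+c\cvf{Z})=a\cvf{X}+b\cvf{Y}+y^{2}c\cvf{Z},
\]
verify $\Psi(T\ms{A}_{e}f)\subseteq T\ms{A}_{e}\check{f}$, and show the induced map on quotients is bijective. First, the image does land inside $\ms{F}(\check{f})$: with $\check{P}=y^{2}$, $\check{Q}=y^{3}p$ and $\check{\mu}=\check{Q}_{y}/\check{P}_{y}=\tfrac{3}{2}yp+y^{3}p_{Y}$, the infinitesimal frontal condition $C_{y}-\check\mu B_{y}\in(2y)$ is satisfied by $\Psi(a,b,c)=(a,b,y^{2}c)$ since both $C_{y}=2yc+y^{2}c_{y}$ and $\check\mu\,b_{y}$ lie in $(y)$.

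To show $\Psi(T\ms{A}_{e}f)\subseteq T\ms{A}_{e}\check{f}$, a direct computation gives
\[
\Psi\bigl(tf(\eta_{1}\cvf{x}+\eta_{2}\cvf{y})\bigr)=t\check{f}(\eta_{1}\cvf{x}+\eta_{2}\cvf{y})-2y^{2}\eta_{2}\,p\cvf{Z},
\]
and the error term is realised as $\omega\check{f}(G\cvf{Z})$ with $G(X,Y,Z)=-2\eta_{2}(X,Y)\,Y\,p(X,Y)$ when $\eta_{2}$ is even in $y$, and $G(X,Y,Z)=-2h(X,Y)Z$ when $\eta_{2}=y\,h$ is odd. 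The analogous comparison for $\omega f$ versus $\omega\check{f}$ is handled by expanding the target field in powers of $Z$ and using that $y^{2}=Y$ is a target pull-back: each term of $\Psi(\omega f(\omega))-\omega\check{f}(\omega)$ sits in $t\check{f}(\theta_{2})+\omega\check{f}(\theta_{3})$.

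For bijectivity of $\bar\Psi$ I would exploit the $\mb{Z}/2$-symmetry $\sigma(x,y)=(x,-y)$, $\tau(X,Y,Z)=(X,Y,-Z)$, with respect to which both $f$ and $\check{f}$ are equivariant. This decomposes $\theta(f)$ and $\ms{F}(\check{f})$ into $\sigma^{*}$-invariant ($+$) and anti-invariant ($-$) summands, preserved by $T\ms{A}_{e}$ on either side. A direct calculation---using $f_{y}=(0,2y,p+2y^{2}p_{Y})$ and $\check{f}_{y}=(0,2y,3y^{2}p+2y^{4}p_{Y})$, both with $2y$ in the $Y$-slot---shows the anti-invariant parts lie entirely in $T\ms{A}_{e}$, so both quotients reduce to their symmetric parts. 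On the symmetric parts every class has a canonical representative $(0,0,y\,c(x,y^{2}))\cvf{Z}$ on the $f$-side and $(0,0,y^{3}c(x,y^{2}))\cvf{Z}$ on the $\check{f}$-side after absorbing the $X,Y$-components via $tf,t\check{f}$ and target translations, and $\bar\Psi$ reduces to the identity $[c]\mapsto[c]$ in $\ms{O}_{2}$.

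The main obstacle is proving that the ideals of $\ms{O}_{2}$ which kill $c$ on either side coincide, which is what makes $\bar\Psi$ an isomorphism rather than just a surjection. Source-side generators and target translations in $X,Y$ contribute identical elements on both sides; the delicate matching is between $\omega f(G(X,Y,Z)\cvf{Z})$---contributing $G(x,y^{2},y\,p(x,y^{2}))$ to the reduced function---and $\omega\check{f}(G(X,Y,Z)\cvf{Z})$---contributing $G(x,y^{2},y^{3}p(x,y^{2}))$, subject to the parity constraint imposed by the frontal restriction, which forces $G$ to be odd in $Z$. Expanding $G$ in $Z$ and using $y^{2}=Y$ repeatedly, both contributions generate the same ideal. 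This is exactly where the extra factor $y^{2}$ in $\check{f}_{3}$ compensates the frontal constraint; once the ideal equality is in hand, $\bar\Psi$ is bijective and $\codim_{\ms{A}}(f)=\codim_{\ms{F}}(\check{f})$ follows at once, together with the corresponding finiteness equivalence.
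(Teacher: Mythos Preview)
Your approach is correct and essentially the same as the paper's. The paper also builds an explicit $\mb{C}$-linear map $K\colon\theta(f)\to\ms{F}(\check f)$ and checks that it carries $T\ms{A}_ef$ onto $T\ms{A}_e\check f$; its $K$ differs from your $\Psi$ only cosmetically (it leaves the $y$-constant part of the $Z$-component fixed and multiplies only the $y$-divisible remainder by $y^{2}$), so the two maps agree modulo $\ms{O}_2^{T}\cvf{Z}\subset T\ms{A}_e\check f$ and induce the same map on quotients. Where you argue from scratch via the $\mb{Z}/2$-equivariance, the paper quotes Mond's formula
\[
T\ms{A}_ef=\ms{O}_2\cvf{X}\oplus\ms{O}_2\cvf{Y}\oplus\bigl(\ms{O}_2^{T}+y\,T_e\ms{K}^{T}(p\circ h)\bigr)\cvf{Z},
\]
together with the identity $T_e\ms{K}^{T}(q\circ h)=y^{2}T_e\ms{K}^{T}(p\circ h)$ for $q=yp$, which already encodes the even/odd splitting you set up by hand; invoking it would collapse your last two paragraphs to a one-line computation.

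Two small points. First, your $\Psi$ is not onto $\ms{F}(\check f)$ (it misses $g(x)\cvf{Z}$), so surjectivity of $\bar\Psi$ really uses $\ms{F}(\check f)=\operatorname{im}\Psi+T\ms{A}_e\check f$; this is true but should be stated. Second, the phrase ``parity constraint imposed by the frontal restriction, which forces $G$ to be odd in $Z$'' is misattributed: the oddness of $G$ comes from the $\mb{Z}/2$-equivariance (you are computing the odd part of $T\ms{A}_e\check f$), not from the frontal condition. The frontal condition constrains the numerator $\ms{F}(\check f)$, not the target vector fields feeding into $\omega\check f$; all of $\theta_3$ is available there.
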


In order to prove this result, we consider a $\mb{C}$-linear mapping $K\colon \theta(f) \to \ms{F}(\check{f})$ that frontalises the infinitesimal deformations of $f$.
This map will be an epimorphism by construction and will send $T\ms{A}_ef$ onto $T\ms{A}_e\check{f}$, thus inducing the desired isomorphism between the quotient spaces.

Let $i\colon (\mb{C},0) \to (\mb{C}^2,0)$ be the germ of immersion given by $i(x)=(x,0)$.
If $\xi=(\xi_1,\xi_2,\xi_3) \in \theta(f)$, we can find a unique $\hat{\xi}_3$ such that $\xi=(\xi_1,\xi_2,i^*\xi_3)+(0,0,y\hat{\xi}_3)$, thus yielding the following decomposition of $\theta(f)$:
	\[\theta(f)=(\ms{O}_2^2\times i^*\ms{O}_2)\oplus\ms{O}_2\{(0,0,y)\}\]
We then define the linear map $K$ as follows:
\begin{enumerate}
	\item if $\xi \in \ms{O}_2^2\times i^*\ms{O}_2$, $K(\xi)=\xi$;
	\item if $\xi \in \ms{O}_2\{(0,0,y)\}$, $K(\xi)=y^2\xi$.
\end{enumerate}

As shown in \cite{Frontals}, $\xi \in \ms{F}(\check{f})$ if and only if there exist $\alpha,\beta \in \ms{O}_2$ such that $d\xi_3=\alpha\,dx+y\beta\,dy$.
It then follows that
	\[\ms{F}(\check{f})\cong \frac{\theta(\check{f})}{\ms{O}_2\{(0,0,y)\}}\]
and $K$ is a $\mb{C}$-linear epimorphism.

\begin{lemma}\label{lemma: K preserva tangentes}
	Given a fold surface $f$, $K(T\ms{A}_ef)=T\ms{A}_e\check{f}$.
\end{lemma}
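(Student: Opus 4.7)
My plan is a direct double-inclusion argument, checking $K$ on $\mathbb{C}$-module generators of both sides. For the forward inclusion $K(T\ms{A}_ef)\subseteq T\ms{A}_e\check{f}$, I exploit the fact that the third component of $f_x=(1,0,yP_x)$ lies in $y\ms{O}_2$, so $K$ commutes with multiplication by $h\in\ms{O}_2$ on $\ms{O}_2\{f_x\}$, yielding $K(hf_x)=h\check{f}_x\in t\check{f}(\theta_2)$ after a direct check. For $hf_y$, a short computation gives $K(hf_y)=h\check{f}_y+(0,0,R_h)$, and I verify $(0,0,R_h)\in\omega\check{f}(\theta_3)$ by expressing $R_h$ as a $\check{f}^*$-pullback from $\ms{O}_3$, splitting $h$ into its $y$-even and $y$-odd parts and using the generators $\check{f}^*\ms{O}_3=\mathbb{C}\{x,y^2,y^3P\}$. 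For $\omega f(X^aY^bZ^c\partial_i)$ with $i\in\{X,Y\}$, $K$ is the identity on the relevant component, and I express the result as $(x^ay^{2b+c}P^c)\check{f}_i$ minus an $\omega\check{f}$-correction whose third component pulls back by the same substitution rules. For $i=Z$, $K$ multiplies the third component by $y^2$ outside the degenerate case $b=c=0$, and a parity analysis on $c$ places the result in $\omega\check{f}(\theta_3)$.

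For the reverse inclusion $T\ms{A}_e\check{f}\subseteq K(T\ms{A}_ef)$, I exhibit explicit preimages. The identity $\check{f}_x=K(f_x)$ is immediate, and I build preimages of the $\omega\check{f}$-generators by inverting the $y$-exponent shift in $K$: $\omega\check{f}(X^aY^bZ^c\partial_i)=K(\omega f(X^aY^{b+c}Z^c\partial_i))$ for $i\in\{X,Y\}$, since then the $y$-exponent becomes $2(b+c)+c=2b+3c$; and $\omega\check{f}(X^aY^bZ^c\partial_Z)=K(\omega f(X^aY^{b+c-1}Z^c\partial_Z))$ whenever $b+c\geq1$, the exceptional case $b=c=0$ being trivial. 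Finally, the forward-direction computation $K(f_y)=\check{f}_y+\omega\check{f}(\eta)$ for an explicit $\eta\in\theta_3$ yields $\check{f}_y=K(f_y-\omega f(\tilde{\eta}))$, where $\tilde{\eta}$ is the preimage of $\eta$ supplied by the previous step.

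The main technical obstacle is the failure of $\ms{O}_2$-linearity of $K$: the identity $K(h\xi)=hK(\xi)$ holds precisely when $\xi_3(x,0)=0$. Because the third component of $f_y$ carries the nonzero term $p(x,0)$ at $y=0$, the correction terms arising from $K(hf_y)$ involve $h(x,0)p(x,0)$ and must be tracked carefully and absorbed into $\omega\check{f}(\theta_3)$ via the pullback analysis above. A secondary bookkeeping nuisance is the parity-on-$c$ case split needed to pull expressions like $y^{2b+c+3}P^c$ back through $\check{f}^*$, since for $c$ even one factors out $y^3P^c=ZP^{c-1}$ (requiring $c\geq1$) while for $c$ odd one directly uses $y^{2b+c+3}=Y^{(2b+c+3)/2}$.
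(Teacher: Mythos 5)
Your route is genuinely different from the paper's: rather than invoking Mond's closed-form description of the tangent space of a fold surface, $T\ms{A}_ef=\ms{O}_2\cvf{X}\oplus\ms{O}_2\cvf{Y}\oplus(\ms{O}_2^T+yT_e\ms{K}^T(p\circ h))\cvf{Z}$, which reduces the lemma to the one-line identity $T_e\ms{K}^T((yp)\circ h)=y^2T_e\ms{K}^T(p\circ h)$, you check both inclusions directly on generators of $tf(\theta_2)+\omega f(\theta_3)$. Your forward inclusion is essentially sound (the $R_h$ computation and the even/odd splitting of $h$ are the right moves). The genuine gap is in the reverse inclusion, at the $\partial_Z$-generators: the identity $\omega\check{f}(X^aY^bZ^c\partial_Z)=K(\omega f(X^aY^{b+c-1}Z^c\partial_Z))$ fails for $(b,c)=(1,0)$, which satisfies your hypothesis $b+c\geq 1$. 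There the proposed preimage is $(0,0,x^a)$, whose third component does not vanish on $y=0$, so $K$ fixes it and you get $(0,0,x^a)$ instead of $\omega\check{f}(X^aY\partial_Z)=(0,0,x^ay^2)$. Worse, under the reading of $K$ you are using (split off the value at $y=0$ and multiply the $y$-divisible remainder by $y^2$, which is how you phrase the non-linearity criterion $\xi_3(x,0)=0$), no other preimage can rescue this: every element of $\operatorname{im}K$ has third component in $\ms{O}_1+y^3\ms{O}_2$, so $(0,0,x^ay^2)$ is not in $\operatorname{im}K$ at all, and the inclusion $T\ms{A}_e\check{f}\subseteq K(T\ms{A}_ef)$ cannot be closed along these lines.

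The repair is to base all the bookkeeping on the parity decomposition $\ms{O}_2=\ms{O}_2^T\oplus y\ms{O}_2^T$ of the $\cvf{Z}$-component (even/odd in $y$), with $K$ multiplying only the odd part by $y^2$; this is, in substance, the decomposition the paper's argument exploits through $\ms{O}_2^T$ and $T_e\ms{K}^T(p\circ h)$. With that reading, for $c$ even the generator $\omega\check{f}(X^aY^bZ^c\partial_Z)$ is even in $y$, equals $\omega f(X^aY^{b+3c/2}p(X,Y)^c\partial_Z)$, and is its own preimage, while your shifted preimage is needed (and works) only for odd $c$; the analogous adjustment also disposes of the $c=0$ cases for $i\in\{X,Y\}$ that your ``requiring $c\geq1$'' caveat leaves open. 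Two smaller omissions: in the reverse inclusion you only produce $\check{f}_y$, but since $K(T\ms{A}_ef)$ carries no evident $\ms{O}_2$-module structure you need $h\check{f}_y$ for every $h\in\ms{O}_2$, i.e.\ you must rerun $K(hf_y)=h\check{f}_y+(0,0,R_h)$ and absorb $(0,0,R_h)$ through the (corrected) $\omega\check{f}$ step; and arguing monomial-by-monomial in $\omega f(\theta_3)$ needs either a closure remark for the infinite sums or, better, the same parity argument carried out for an arbitrary $g\in\ms{O}_3$.
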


Throughout the proof of this lemma, we shall use the following notation, borrowed from \cite{Mond_Classification}:
\begin{enumerate}
	\item $\ms{O}_2^T:=\{g(x,y^2): g \in \ms{O}_2\}$;
	\item $\mf{m}_2^T:=\{g(x,y^2): g \in \mf{m}_2\}=\ms{O}_2^T\cap\mf{m}_2$;
	\item if $h(x,y)=(x,y^2)$, $T_e\ms{K}^T(p\circ h):=h^*(p_x,yp_y,p)$.
\end{enumerate}

\begin{proof}[Proof of Lemma \ref{lemma: K preserva tangentes}]
	Let $f(x,y)=(x,y^2,yp(x,y^2))$, and $q(x,y)=yp(x,y)$.
	By definition of $T_e\ms{K}^T$, we have
		\begin{align*}
			T_e\ms{K}^T(q\circ h)	&=h^*(q_x,yq_y,q)=h^*(yp_x,y(p+yp_y),yp)=\\
									&=y^2h^*(p_x,yp_y,p)=y^2T_e\ms{K}^T(p\circ h)
		\end{align*}

	Using \cite{Mond_Classification}, we then have
	\begin{align*}
		T\ms{A}_ef=			&\ms{O}_2\cvf{X}\oplus\ms{O}_2\cvf{Y}\oplus(\ms{O}_2^T+yT_e\ms{K}^T(p\circ h))\cvf{Z};\\
		T\ms{A}_e\check{f}=	&\ms{O}_2\cvf{X}\oplus\ms{O}_2\cvf{Y}\oplus(\ms{O}_2^T+yT_e\ms{K}^T(q\circ h))\cvf{Z}
	\end{align*}

	Given $\xi \in T\ms{A}_ef$, there exist $\xi_1,\xi_2 \in \ms{O}_2$, $\xi_3 \in \ms{O}_2^T$ and $\xi_4 \in T_e\ms{K}^T(p\circ h)$ such that
		\[\xi=\xi_1\cvf{X}+\xi_2\cvf{Y}+(\xi_3+y\xi_4)\cvf{Z}\]
	In addition, there exists a unique $\hat{\xi_3} \in \ms{O}_2$ such that $\xi_3=i^*\xi_3+y\hat{\xi_3}$.
	We then have
		\[K(\xi)=\xi_1\cvf{X}+\xi_2\cvf{Y}+(i^*\xi_3+y^3\hat{\xi_3}+y^3\xi_4)\cvf{Z} \in T\ms{A}_e\check{f}\]
	and thus $K(T\ms{A}_ef)\subseteq T\ms{A}_e\check{f}$.

	Conversely, let $\eta \in T\ms{A}_e\check{f}$: there exist $\eta_1,\eta_2 \in \ms{O}_2$, $\eta_3 \in \ms{O}_2^T$ and $\eta_4 \in T_e\ms{K}^T(p\circ h)$ such that
		\[\eta=\eta_1\cvf{X}+\eta_2\cvf{Y}+(\eta_3+y^3\eta_4)\cvf{Z}\]
	In addition, there exists a unique $\hat{\eta_3} \in \ms{O}_2$ such that $\eta_3=i^*\eta_3+y^2\hat{\eta_3}$.
	We then have
		\[\eta=K\left(\eta_1\cvf{X}+\eta_2\cvf{Y}+(i^*\eta_3+\hat{\xi_3}+y\xi_4)\cvf{Z}\right) \in K(T\ms{A}_ef)\]
	proving the opposite inclusion.
\end{proof}

Since $K\colon \theta(f) \to \ms{F}(\check{f})$ is surjective and $K(T\ms{A}_ef)=T\ms{A}_e\check{f}$, the induced map
	\[\hat K\colon \frac{\theta(f)}{T\ms{A}_ef} \to \frac{\ms{F}(\check{f})}{T\ms{A}_e\check{f}}\]
is an isomorphism, thus proving Theorem \ref{thm: frontalisation preserves codimension}.

\section{Double point curve of a frontal surface}\label{doublepoints}
	Marar and Tari \cite{MararTari} studied the geometric invariants of corank $1$ smooth maps $F\colon (\mb{R}^3,0) \to (\mb{R}^3,0)$.
One of their findings was that the source double point space (see \S 5.2 below) of the discriminant of $F$ is given by an equation in the form $dc^2=0$, where $c,d\colon (\mb{R}^2,0) \to \mb{R}$ are the equations for the cuspidal edge and transverse double point curves in the discriminant of $F$.
In particular,  the discriminant of $F$ is a front if and only if $\Sigma(F)$ is smooth in $(\mb{R}^3,0)$ (see \cite{Arnold_CWF}).
Therefore, this formula can be also applied to $f=F|_{\Sigma(F)}$, seen as a wavefront.

In this section, we show that this formula holds for any frontal, not just fronts.

\subsection{Pre-normal form of a corank $1$ injective map}
Let $f\colon (\mb{C}^2,0) \to (\mb{C}^3,0)$ be a corank $1$ frontal map-germ in prenormal form.
We define $D^2(f)$ as the space in $(\mb{C}^3,0)$ given by the equations
	\[\frac{p(x,y)-p(x,y')}{y-y'}=\frac{q(x,y)-q(x,y')}{y-y'}=0\]
We then have $D^2(f)=C^2(f)\cup D_+^2(f)$, where
\begin{align*}
	C^2(f)=\{(x,y,y): (x,y) \in \Sigma(f)\}; && D_+^2(f)=\overline{\{(x,y,y') \in D^2(f): y\neq y'\}}
\end{align*}
Note that $D_+^2(f)$ coincides with the double point curve for $\ms{A}$-finite surfaces.

Let us first assume $f$ is injective (hence finite by the Nullstellensatz) and let $\pi\colon D^2(f) \to (\mb{C}^2,0)$ be given by $(x,y,y') \mapsto (x,y)$.
Since $f$ is finite, $\pi$ is also finite and $D(f)=\pi(D^2(f))$ is an analytic set by Remmert's Proper Mapping Theorem (\cite{Lojasiewicz_Remmert}).

\begin{definition}
	We define the \textbf{double point space} of $f$ as $D(f)$.
\end{definition}

Since $\pi\colon D^2(f) \to \mb{C}^2$ is a finite germ, $\ms{O}_{D^2(f)}$ admits a finite presentation in the form
\begin{diagram}
	\ms{O}_2^r \arrow[r,"\lambda"] & \ms{O}_2^s \arrow[r] & \ms{O}_{D^2(f)} \arrow[r] & 0
\end{diagram}

Since $f$ is assumed to be injective, $D(f)=\Sigma(f)$.
Let $\Delta(f)=f(D(f))$.
Since $f$ is complex analytic, $\Delta(f)$ has codimension $1$, so its dimension must be exactly $1$.
We can then choose a generic plane $H_0 \subset \mb{C}^3$ passing through $0$ such that $H_0\cap \Sigma=\{0\}$ and $f$ is transverse to $H_0$ at $0$.

If $(X,Y,Z)$ are coordinates for $(\mb{C}^3,0)$, we can assume $H_0$ is the plane of equation $X=0$, so a suitable change of coordinates transforms $f$ into a mapping of the form
	\[f(x,y)=(x,\tilde p(x,y),\tilde q(x,y))\]
where $\tilde p, \tilde q$ have order greater than $1$.
However, since $D^2(f)$ is preserved by diffeomorphisms, we can assume $\tilde p=p$ and $\tilde q=q$.
We then define $L_0=f^{-1}(H_0)$, which is the line of equation $x=0$.

\begin{lemma}\label{lemma: local intersection number}
	Suppose that we have a curve $(X,0)$ and a hypersurface $(Y,0)$ in $(\mb{C}^n,0)$ such that $X\cap Y=\{0\}$.
	Then, $i_0(X,Y)=1$ if and only if $(X,0)$, $(Y,0)$ are smooth submanifold-germs of $(\mb{C}^n,0)$ and $X\pitchfork Y$.
\end{lemma}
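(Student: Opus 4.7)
The plan is to reduce the statement to a parametric computation of $i_0(X,Y)$. Let $g\in\mf{m}_n$ be a reduced defining equation of $(Y,0)$, and decompose $(X,0)=\bigcup_{i=1}^{r}(X_i,0)$ into irreducible branches with normalizations $\gamma_i\colon(\mb{C},0)\to(X_i,0)$. Because a reduced one-dimensional analytic germ is Cohen--Macaulay and its normalization is $\bigoplus_i\mb{C}\{t_i\}$, the local intersection number admits the standard parametric expression
\[
i_0(X,Y)=\dim_{\mb{C}}\ms{O}_{X,0}/(g|_X)=\sum_{i=1}^{r}\operatorname{ord}_t\bigl(g\circ\gamma_i\bigr),
\]
which I would take as the working definition.

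The first substantive step is to establish the multiplicity bound $i_0(X,Y)\geq\mathrm{mult}_0(X)\cdot\mathrm{mult}_0(Y)$. Setting $m_i=\operatorname{ord}(\gamma_i)=\mathrm{mult}_0(X_i)$ and $e=\operatorname{ord}(g)=\mathrm{mult}_0(Y)$, every monomial of degree $d$ in the Taylor expansion of $g$ pulls back under $\gamma_i$ to a series of order at least $d\cdot m_i$, so $\operatorname{ord}(g\circ\gamma_i)\geq m_i e$. Summing over branches and using $\mathrm{mult}_0(X)=\sum_i m_i$ yields the bound.

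From here, if $i_0(X,Y)=1$, the inequality forces $r=1$, $m_1=1$ and $e=1$. A hypersurface of multiplicity one has a defining equation with nonzero linear part, so $(Y,0)$ is smooth; a single branch of multiplicity one admits a parametrization $\gamma$ with $\gamma'(0)\neq 0$, so $(X,0)$ is smooth with $T_0X=\mb{C}\cdot\gamma'(0)$. The surviving condition $\operatorname{ord}(g\circ\gamma)=1$ then reads $dg|_0(\gamma'(0))\neq 0$, i.e.\ $T_0X\not\subseteq T_0Y$; since $\dim T_0X+\dim T_0Y=1+(n-1)=n$, this is exactly the transversality $X\pitchfork Y$. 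The converse is a short coordinate calculation: choosing coordinates with $Y=\{z_1=0\}$ and $T_0X\not\subseteq T_0Y$, one may take $\gamma(t)=(t,\gamma_2(t),\dots,\gamma_n(t))$, whence $g\circ\gamma$ has order $1$ and $i_0(X,Y)=1$.

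The main technical point worth flagging, rather than a genuine obstacle, is the parametric formula for $i_0(X,Y)$ when $(X,0)$ is allowed to be singular or reducible. This is a standard consequence (see e.g.\ \cite{Teissier_Invariants}) of the Cohen--Macaulayness of reduced one-dimensional germs and the description of their normalization as $\bigoplus_i\mb{C}\{t_i\}$; once it is in hand, everything else reduces to routine linear algebra on tangent spaces.
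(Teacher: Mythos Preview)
Your proof is correct and follows essentially the same approach as the paper: decompose $(X,0)$ into irreducible branches, parametrize each by $\gamma_i$, and use the formula $i_0(X,Y)=\sum_i\operatorname{ord}(g\circ\gamma_i)$ to reduce $i_0=1$ to a single branch with $\operatorname{ord}(g\circ\gamma_1)=1$. The only minor differences are that you route through the multiplicity bound $i_0\geq\mathrm{mult}_0(X)\cdot\mathrm{mult}_0(Y)$ to conclude smoothness of $X$ and $Y$ separately, whereas the paper argues directly via the chain rule that $g\circ\gamma_1$ being a local biholomorphism forces $\gamma_1$ to be an immersion and $g$ a submersion (and hence $X\pitchfork Y$ as well); and you spell out the easy converse, which the paper leaves implicit.
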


\begin{proof}
	Let $(X_1,0),\dots,(X_r,0)$ be the irreducible components of $(X,0)$.
	Since the local intersection number is additive,
		\[i_0(X,Y)=\sum^r_{i=1}i_0(X_i,Y)\]

	For each $i=1,\dots,r$, we can consider a holomorphic $\gamma_i\colon (\mb{C},0) \to (\mb{C}^n,0)$ which is finite, generically $1$-to-$1$ and whose image is $(X_i,0)$.
	If $g\colon (\mb{C}^n,0) \to (\mb{C},0)$ is the reduced equation for $(Y,0)$, $g\circ \gamma_i$ is not constant, so we can write
		\[(g\circ \gamma_i)(u)=a_ku^k+a_{k+1}u^{k+1}+\dots\]
	with $k=\ord_0(g\circ \gamma_i)$.
	For $t\neq 0$ and close to the origin, $t$ is a regular value of $g\circ \gamma_i$ and has exactly $k$ preimages in a neighbourhood of $0$.
	Therefore, $i_0(X_i,Y)=\ord_0(g\circ \gamma_i)$.

	Suppose now that $i_0(X,Y)=1$.
	Necessarily $r=1$ and $\ord(g\circ \gamma_1)=1$, so $g\circ \gamma_1$ is a diffeomorphism.
	This means $\gamma_1$ is an immersion and $g$ is a submersion.
	Thus, $(X,0)$ and $(Y,0)$ are both smooth and $X\pitchfork Y$.
\end{proof}

\begin{lemma}\label{lemma: delta smooth}
	With the above notation we have:
	\begin{enumerate}
		\item $\Delta(f)$ is smooth and transverse to $H_0$ at the origin.
		\item $D(f)$ is smooth and transverse to $L_0$ at the origin.
	\end{enumerate}
\end{lemma}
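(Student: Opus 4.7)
The plan is to apply Lemma \ref{lemma: local intersection number}, which reduces both (1) and (2) to showing that the local intersection numbers equal $1$: that is, to proving
\[
i_0(\Delta(f),H_0)=1 \quad \text{and} \quad i_0(D(f),L_0)=1.
\]

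The first step is to observe that these two intersection numbers coincide. Since $f$ is injective and finite, its restriction $f\colon D(f)\to\Delta(f)$ is a bijective finite map, and so induces a bijective correspondence between irreducible components. If $\gamma_i\colon(\mb{C},0)\to(D_i,0)$ is a finite, generically one-to-one parametrization of an irreducible component $D_i\subseteq D(f)$, then $f\circ\gamma_i$ is such a parametrization of the corresponding component $\Delta_i\subseteq\Delta(f)$. Because $X\circ f=x$ in the prenormal form, applying the formula of Lemma \ref{lemma: local intersection number} to the reduced equations $X$ of $H_0$ and $x$ of $L_0$ gives
\[
i_0(\Delta_i,H_0)=\ord_0(X\circ f\circ\gamma_i)=\ord_0(x\circ\gamma_i)=i_0(D_i,L_0),
\]
and additivity over components yields $i_0(\Delta(f),H_0)=i_0(D(f),L_0)$.

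The second step is to show that this common value equals $1$. I would interpret it as the degree at $0$ of the finite projection $\pi_1\colon\Delta(f)\to(\mb{C},0)$, $(X,Y,Z)\mapsto X$, which is well-defined because $H_0\cap\Delta(f)=\{0\}$; via $\pi_1\circ f=x$ and the injectivity of $f$, this degree also equals the degree of $x\colon D(f)\to(\mb{C},0)$, i.e.\ the number of points in $D(f)$ lying over a generic $x_0$ close to $0$. The input that forces this number to be $1$ is the combination of injectivity with the frontal condition $q_y=\mu p_y$: if $D(f)$ had either a second branch through $0$ or a non-reduced component, then for a generic small $x_0$ one could exhibit two distinct points $(x_0,y),(x_0,y')\in D(f)$ with $y\neq y'$, and the divided-difference equations defining $D^2(f)$ together with the factorisation $q_y=\mu p_y$ would yield an off-diagonal solution, contradicting injectivity.

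The main obstacle is precisely this final implication: translating the set-theoretic injectivity of $f$, combined with the frontal structure, into the algebraic statement that $D(f)$ has a single reduced branch at $0$ with tangent direction not along the $y$-axis. The frontal condition $q_y=\mu p_y$ is essential here, because it lets one rewrite the divided-difference system $(p(x,y)-p(x,y'))/(y-y')=(q(x,y)-q(x,y'))/(y-y')=0$ as a pair consisting of one equation plus a multiple of $(y-y')$, and it is only after this reduction that the presence of an extra branch (or a non-reduced one) can be shown to force off-diagonal common zeros forbidden by injectivity.
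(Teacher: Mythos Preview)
Your reduction of both statements to the single equality $i_0(\Delta(f),H_0)=i_0(D(f),L_0)=1$, and your verification that the two intersection numbers agree via the bijection $f|_{D(f)}$ and the relation $X\circ f=x$, are fine and match the paper's use of Lemma~\ref{lemma: local intersection number}. The problem is the second step, where you try to force this common value to be $1$ directly from injectivity plus the frontal condition.

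The gap is exactly the implication you yourself flag as the obstacle. Two points $(x_0,y)$ and $(x_0,y')$ of $D(f)=\Sigma(f)$ with $y\neq y'$ are merely two critical points of the slice $\gamma_{x_0}$; there is no reason the divided differences $p[x_0,y,y']$ and $q[x_0,y,y']$ should vanish, so you do not get an off-diagonal point of $D^2(f)$ and no contradiction with injectivity. The frontal identity $q_y=\mu p_y$ lives on the diagonal and does not propagate to the divided-difference system, so the ``rewrite as one equation plus a multiple of $(y-y')$'' that you allude to does not go through. In short, injectivity of $f$ rules out genuine double points, not pairs of distinct critical points on the same fibre; yet it is precisely the latter that you must exclude.

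The paper closes this gap by a completely different mechanism. It looks at the image slices $Y_t=\gamma_t(U_t)$, uses conservation of the $\delta$-invariant together with injectivity of $\gamma_t$ (hence irreducibility of $Y_t$ and Milnor's formula $\mu=2\delta$) to obtain $\mu(Y_0,0)=\sum_i\mu(Y_t,x_i)$, and then invokes the non-splitting theorem of Gabri\`elov--Lazzeri--L\^e to conclude that the number $m$ of singular points of $Y_t$ is $1$. Since $m=i_0(\Delta(f),H_0)$, Lemma~\ref{lemma: local intersection number} finishes the proof. Note in particular that the paper's argument never uses the frontal hypothesis; the non-splitting theorem is doing the real work, and there is no elementary substitute for it in this context.
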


\begin{proof}
	Let $H_t$ be the plane in $\mb{C}^3$ given by $x=t$.
	We consider a small enough representative of the form $f\colon U \to T\times V$, where $U,T \subset \mb{C}$ and $V\subset \mb{C}^2$ are open neigbourhoods of the origin, such that
	\begin{enumerate}
		\item $f$ is transverse to $H_t$ for all $t \in T$,
		\item $H_0\cap \Delta(f)=\{0\}$,
		\item $H_t$ is transverse to $\Sigma$ for all $t\neq 0$.
	\end{enumerate}

	For each $t \in T$, we define $Y_t$ as the analytic subset of $V$ such that $X\cap H_t=\{t\}\times Y_t$.
	Then $Y_t$ can be parametrised as the curve $\gamma_t \colon U_t \to V$ given by $\gamma_t(y)=(p(t,y),q(t,y))$, where $U_t=\{y \in \mb{C}\colon (t,y) \in U\}$.

	Let $t \in T\backslash \{0\}$ and $x_1,\dots,x_m$ be the singular points of $Y_t$.
	By the conservation of the delta invariant, we have
		\[\delta(Y_0,0)=\sum^m_{i=1}\delta(Y_t,x_i)\]
	Since $f$ is injective, $\gamma_0$ and $\gamma_t$ are also injective.
	Thus, $Y_0$ is irreducible at $0$ and $Y_t$ is also irreducible at each $x_i$.
	By Milnor's formula, we get
		\[\mu(Y_0,0)=\sum^m_{i=1}\mu(Y_i,x_i)\]
	A theorem due independently to Gabrièlov, Lazzeri and Lê\cite{Gabrielov_Multiplicity, Lazzeri_Multiplicity, Le_Multiplicity} states that, in these condition, $m=1$.

	On the other hand, the number $m$ is equal to the local intersection number $i_0(\Sigma;H_0)$.
	Since $H_0$ meets $\Delta(f)$ transversally, it follows from Lemma \ref{lemma: local intersection number} that $\Delta(f)$ is smooth and transverse to $H_0$ at $0$.

	The second item is a consequence of the first one.
\end{proof}

\begin{theorem}[Pre-normal form of a corank $1$ injective map]\label{thm: injective prenormal}
	Let $f\colon (\mb{C}^2,0) \to (\mb{C}^3,0)$ be an injective holomorphic map-germ of corank $1$.
	Then $f$ is a frontal and is $\ms{A}$-equivalent to a map-germ in the form
		\[(x,y) \mapsto (x,y^m,y^mh(x,y))\]
	for some $h \in \ms{O}_2$ and $m \geq 2$.
\end{theorem}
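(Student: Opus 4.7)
The plan is to reduce $f$ to the claimed normal form through a sequence of source and target coordinate changes, with the key input being a constancy-of-multiplicity argument of L\^e–Ramanujam type applied to the planar fibres of $f$.

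First, corank $1$ puts $f$ in prenormal form $f(x,y) = (x, p(x,y), q(x,y))$. By Lemma \ref{lemma: delta smooth}, $\Sigma(f) = D(f)$ is smooth and transverse to $L_0 = \{x=0\}$, so a source diffeomorphism of the form $(x,y) \mapsto (x, y - \beta(x))$ straightens $\Sigma(f)$ to $\{y=0\}$ while preserving the prenormal form. Since $f|_{\Sigma(f)}$ is an injective holomorphic map between smooth $1$-dimensional germs, it is a biholomorphism onto the smooth curve $\Delta(f)$; a target diffeomorphism followed by rescaling of the $X$-coordinate therefore gives $f(x,0) = (x,0,0)$, whence $p(x,0) = q(x,0) = 0$. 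The inclusion $\{y=0\} \subseteq \Sigma(f)$ also forces $p_y(x,0) = q_y(x,0) = 0$, and hence $y^2 \mid p$ and $y^2 \mid q$ in $\ms{O}_2$.

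Next, after a generic linear change in the $(Y,Z)$-plane (which preserves all previous reductions) we may arrange $\mathrm{ord}_y\, p(0,y) \leq \mathrm{ord}_y\, q(0,y)$ and set $m := \mathrm{ord}_y\, p(0,y) \geq 2$. As in the proof of Lemma \ref{lemma: delta smooth}, the planar fibres $Y_t = f(\mb{C}^2,0) \cap \{X=t\}$ are irreducible (by injectivity of $f$) plane-curve germs with a single singular point per fibre, and conservation of the $\delta$-invariant yields constancy of $\mu$; by the L\^e–Ramanujam theorem the family $\{Y_t\}$ is then topologically trivial, so the multiplicity of its singular point is constant in $t$ and equal to $m$. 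Writing $p = \sum_j a_j(x) y^j$, the equality $\mathrm{ord}_y\, p(t,y) = m$ for every small $t$ forces $a_j \equiv 0$ for all $j < m$ and $a_m(0) \neq 0$; therefore $p = y^m P$ with $P \in \ms{O}_2$ a unit, and comparison of $y$-orders gives $q = y^m Q$ for some $Q \in \ms{O}_2$.

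Finally, the source diffeomorphism $y \mapsto y \cdot P(x,y)^{1/m}$ (well defined since $P$ is a unit) converts $f$ into $(x,y) \mapsto (x, y^m, y^m h(x,y))$ with $h := Q/P \in \ms{O}_2$, yielding the claimed normal form. Frontality is then automatic by Proposition \ref{prop: corank 1 frontal}, since $p_y = m\,y^{m-1}$ divides $q_y = y^{m-1}(mh + y\,h_y)$. The principal obstacle is the constancy-of-multiplicity step: without the $\mu$-constant machinery, jumps such as $p = y^3 + xy^2$ (generic $y$-order $2$ but fibre-over-$0$ order $3$) would obstruct the reduction, and ruling these out genuinely requires input beyond formal coordinate manipulation.
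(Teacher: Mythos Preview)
Your proof is correct and follows essentially the same route as the paper's: straighten $\Sigma(f)$ via Lemma~\ref{lemma: delta smooth}, kill the constant terms in $p,q$ by a target change, use $\mu$-constancy of the planar fibres $Y_t$ to force constant multiplicity (you cite L\^e--Ramanujam, the paper cites Zariski---equivalent for plane curves), and extract the $y^m$ factor (you expand in $y$, the paper invokes Weierstrass Preparation). The only additions are that you make the frontality check via Proposition~\ref{prop: corank 1 frontal} explicit and spell out the $m$-th root normalisation, both of which the paper leaves implicit.
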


\begin{proof}
	Let $\mu \in \ms{O}_2$ such that $\mu=0$ is the reduced equation of $D(f)$.
	By Lemma \ref{lemma: delta smooth}, $D(f)$ is smooth and transverse to $L_0$, so $\mu_y$ does not vanish at the origin.
	We consider the diffeomorphism $\phi(x,y)=(x,\mu(x,y))$.
	We have that the set of non-immersive points of $f\circ \phi^{-1}$ is the line $y=0$.

	Assume now that $f(x,y)=(x,p(x,y),q(x,y))$ for some $p,q \in \mfm^2_2$.
	If
	\begin{align*}
		p(x,y)=p_0(x)+yp_1(x,y); && q(x,y)=q_0(x)+yq_1(x,y)
	\end{align*}
	we define the diffeomorphism $\psi(X,Y,Z)=(X,Y-p_0(X),Z-q_0(X))$.
	Then,
		\[(\psi\circ f)(x,y)=(x,yp_1(x,y),yq_1(x,y))\]

	Using the notation from Lemma \ref{lemma: delta smooth}, $Y_t$ is parametrised as the curve $\gamma_t(y)=(p_t(y),q_t(y))$, where $p_t(y)=p(t,y)$ and $q_t(y)=q(t,y)$.
	Then $Y_t$ has a unique singular point at $\gamma_t(0)=0$ and $\mu(Y_t,0)= \mu(Y_0,0)$ for all $t \in T$.
	By a result of Zariski \cite{Zariski}, $\{Y_t\}$ also has constant multiplicity at the origin.

	For $t=0$, $\gamma_0$ is injective, so $p_0$ and $q_0$ are not identically $0$.
	Assume then that $m=\ord_0(p_0)$ and $k=\ord_0(q_0)$.
	This implies that $m=m(Y_t,0)$ for all $t \in T$.

	On the other hand, by the Weierstrass Preparation Theorem, we can write $p(x,y)=r(x,y)y^m$ and $q(x,y)=y^mh(x,y)$ as for some unit $r \in \ms{O}_2$ and $s \in \ms{O}_2$.
	In conclusion,
		\[f(x,y) \sim_\ms{A} (x,y^m,y^mh(x,y))\]
	as claimed.
\end{proof}

We now move onto the generically injective case.

\begin{proposition}
	Let $f\colon (\mb{C}^2,0) \to (\mb{C}^3,0)$ be a holomorphic map-germ of corank 1.
	If $f$ is generically injective, there exists a representative $f\colon N \to Z$ of $f$ such that $f$ is locally injective outside $0$.
\end{proposition}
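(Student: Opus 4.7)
The plan is to characterise the set $B(f)\subseteq N$ of points at which $f$ fails to be locally injective in terms of the double point space $D_+^2(f)$, and to show that $B(f)$ is zero-dimensional as a germ at the origin; a sufficiently small representative will then satisfy $B(f)\cap N\subseteq\{0\}$. To this end I first put $f$ in prenormal form: the corank-one hypothesis at the origin allows a linear change of source and target coordinates after which $f(x,y)=(x,p(x,y),q(x,y))$ with $p,q\in\mfm_2^2$, and I form $D^2(f)\subseteq\mb{C}^3$ by the two divided differences of $p$ and $q$, with decomposition $D^2(f)=C^2(f)\cup D_+^2(f)$ and diagonal $\Delta=\{(x,y,y'):y=y'\}$.

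Next I verify that $B(f)=\pi(\Delta\cap D_+^2(f))$, where $\pi(x,y,y')=(x,y)$. Since the first coordinate of $f$ is $x$, any coincidence $f(p_1)=f(p_2)$ forces $p_1$ and $p_2$ to share an $x$-coordinate, so the failure of local injectivity at $(x_0,y_0)$ corresponds exactly to the existence of sequences $(x_n,y_n,y_n')\in D^2(f)\setminus\Delta$ converging to $(x_0,y_0,y_0)$, that is, to $(x_0,y_0,y_0)\in D_+^2(f)\cap\Delta$.

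The core step is a dimension count. Generic injectivity rules out any two-dimensional component of $D^2(f)\setminus\Delta$: such a component would either make $f$ generically multi-valued (contradicting generic injectivity) or collapse an entire fibre $\{x=\mathrm{const}\}$ onto a point (incompatible with finiteness on a small representative). Hence $D_+^2(f)=\overline{D^2(f)\setminus\Delta}$ is at most one-dimensional, and because it is by construction the closure of its off-diagonal part, no irreducible component of $D_+^2(f)$ lies entirely in $\Delta$; thus $\Delta\cap D_+^2(f)$ is a proper analytic subset of a one-dimensional space, hence zero-dimensional, and its $\pi$-image $B(f)$ is zero-dimensional as a germ at the origin. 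Shrinking $N$ yields the required representative.

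The main obstacle is the dimension bound on $D_+^2(f)$: ensuring that no fibre of $f$ collapses requires a finiteness-type input which generic injectivity does not obviously supply on its own. I expect the cleanest route is to first argue that a corank-one generically injective germ has isolated zero fibre on a small enough representative and hence is finite, so that Remmert's proper mapping theorem makes $D_+^2(f)$ an honest one-dimensional analytic set on which the irreducibility argument above is valid.
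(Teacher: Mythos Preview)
Your plan is essentially a streamlined version of the paper's argument. Both hinge on the decomposition $D^2(f)=C^2(f)\cup D_+^2(f)$ and on the observation that, once $D^2(f)$ is one-dimensional, the off-diagonal components meet the diagonal $\Delta$ only in isolated points. The paper carries this out by listing the irreducible components $C_i$ of $C^2(f)$ and $D_j$ of $D_+^2(f)$, shrinking so that any two of them (and $D^2(f)\cap\Delta$) meet only at the origin, and then doing a case split on points of $D^2(f)\setminus\{0\}$. Your identification $B(f)=\pi(\Delta\cap D_+^2(f))$ packages the same information more directly and bypasses the case analysis; in particular the paper's Case~1 ($y\neq y'$) is not actually needed once one observes that immersive points are automatically locally injective.

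You are right to flag the dimension bound as the delicate step: the paper simply asserts ``since $f$ is generically injective, $D^2(f)$ is an analytic subset of dimension $1$'' and moves on. However, your proposed repair---deducing finiteness of $f$ from corank~$1$ plus generic injectivity---does not go through. The germ $f(x,y)=(x,xy,xy^2)$ has corank~$1$ at the origin and is injective on the dense open set $\{x\neq 0\}$, yet $f^{-1}(0)=\{0\}\times\mb{C}$, so $f$ is not finite; here $D^2(f)=V(x)\subset\mb{C}^3$ is two-dimensional and $f$ fails to be locally injective at every $(0,y_0)$ with $y_0\neq 0$. Thus without an extra hypothesis (finiteness, or equivalently that the slice $y\mapsto(p(x,y),q(x,y))$ is nonconstant for $x$ near $0$) neither your argument nor the paper's can be completed, and the statement as written is false. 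In the surrounding section the germs under consideration are finite in any case (this is needed for the Fitting-ideal machinery), so the cleanest fix is to add finiteness to the hypotheses; once that is in place your argument is complete as it stands.
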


\begin{proof}
	Since $f$ is generically injective, $D^2(f)$ is an analytic subset of dimension $1$.
	Therefore, $D^2(f)$ admits a decomposition into irreducible components in the form
	\begin{align*}
		C^2(f)=C_1\cup\dots\cup C_r; && D_+^2(f)=D_1\cup\dots\cup D_s
	\end{align*}
	Note that every two sets in the family
		\[\ms{I}=\{C_1, \dots, C_r, D_1, \dots, D_s, D^2(f)\cap \Delta\}\]
	are different, so we can use the Curve Selection Lemma (see \cite{CurveSelectionLemma}) to choose closed representatives which only meet at the origin (see Figure \ref{fig: double points intersecting branches}).

\begin{figure}[ht]
\begin{minipage}{0.5\textwidth}
\centering
\begin{tikzpicture}
	\draw (-.7,2) node {$D^2(f)$};

	\clip (0,0) rectangle (4.5,4.5);
	\draw[fill=black!30,draw opacity=0] (0,0) circle (4cm);

	\begin{scope}
	\clip (0,0) circle (4cm);
	\draw[dashed] (0,0) -- (4,4);									
	\draw[dashed] (0,0) .. controls (3,0) and (0,3) .. (1,4);		
	\draw[dashed] (0,0) .. controls (1,3) and (3,1) .. (4,2);		
	\end{scope}

	\begin{scope}
	\clip (0,0) circle (1.75cm);
	\draw (0,0) circle (1.75cm);
	\draw[thick] (0,0) -- (4,4);									
	\draw[thick] (0,0) .. controls (3,0) and (0,3) .. (1,4);		
	\draw[thick] (0,0) .. controls (1,3) and (3,1) .. (4,2);		
	\end{scope}

	\draw (3.1,3.1) node {$\Delta$};
	\draw (1.1,4.1) node {$C_i$};
	\draw (3.9,1.9) node {$D_j$};
\end{tikzpicture}
\end{minipage}
\hfill
\begin{minipage}{0.45\textwidth}
\caption{
	The set of points where two given curves in $\ms{I}$ meet cannot accumulate into the origin.
	Therefore, we can choose a small enough neighbourhood such that no two sets meet outside the origin.\label{fig: double points intersecting branches}
	}
\end{minipage}
\end{figure}
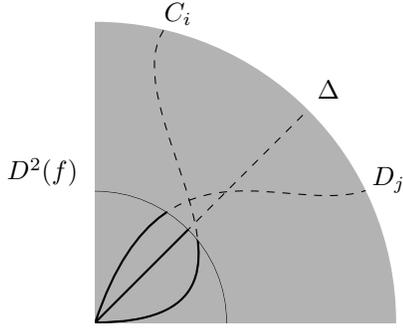

	Let $(x,y,y') \in D^2(f)\backslash\{(0,0,0)\}$.
	\begin{enumerate}
		\item If $y \neq y'$, $(x,y,y') \in D_j$ for some $1\leq j \leq s$.
			We can find an open neighbourhood $V \subset \mb{C}^3$ of $(x,y,y')$ such that $V\cap D^2(f)= V\cap D_j$.
			Since $V$ does not meet $C_1\cup\dots\cup C_r$, $f$ is immersive on $\pi(V)$ and thus locally injective.
		\item If $y=y'$, $(x,y,y) \in C_i$ for some $1 \leq i \leq r$.
			We can find an open neighbourhood $U \subset \mb{C}^3$ such that $U\cap D^2(f)=U\cap C_i$.
			
			Let $U_1,U_2 \subset \mb{C}$ be respective open neighbourhoods of $x$ and $y$ such that $U_1\times U_2\times U_2 \subset U$, and $U_0=U_1\times U_2$.
			If there exist $(x',y'),(x',z') \in U_0$ such that $f(x',y')=f(x',z')$, the point $(x',y',z')$ is in $U\cap D^2(f)$.
			However, we have by construction that
				\[U\cap D^2(f)=U\cap C_i\]
			so $(x',y',z') \in C_i$ and $y'=z'$.
			Therefore, $f|_{U_0}$ is injective.
	\end{enumerate}
\end{proof}

\subsection{Branches of the double point space}
Our goal is to prove that, if $f$ is frontal and $D(f)$ is generated by $\lambda \in \ms{O}_2$, then $p_y^2$ divides $\lambda$.
In order to do so, we shall consider the \textbf{slices} of $f$, $\gamma_t(y)=(p(t,y),q(t,y))$.

Let $\gamma\colon (\mb{C},0) \to (\mb{C}^2,0)$ be a plane curve with isolated singularity at $0$.
We define
	\[D^2(\gamma)=\left\{(t,s) \in \mb{C}^2\colon \frac{\gamma(t)-\gamma(s)}{t-s}=0\right\}\]
Once again, $D^2(\gamma)$ is an analytic set.
Since $\gamma$ has an isolated singularity, it is finite, so the projection-germ $\pi'\colon D^2(\gamma) \to (\mb{C},0)$ given by $\pi'(t,s)=t$ is also finite and $D(\gamma)=\pi'(D^2(\gamma))$ is an analytic set by Remmert's Proper Mapping Theorem.
We consider $D(\gamma)$ with the analytic structure given by the Fitting ideals.

\begin{lemma}\label{lemma: curva milnor divisor}
	Let $\gamma\colon (\mb{C},0) \to (\mb{C}^2,0)$ be a plane curve in the form $\gamma(s)=(s^m,q(s))$ with $q \in \mf{m}_1$.
	If $\gamma$ has an isolated singularity at $0$ and $\mu$ is the Milnor number of $\gamma$, $D(\gamma)$ is the zero locus of the function $d(s)=s^\mu$.
\end{lemma}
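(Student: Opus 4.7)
The plan is to identify the Fitting structure of $D(\gamma)$ with a classical resultant and then reduce the computation to Teissier's polar formula for plane branches. Writing $\alpha=\frac{t^m-s^m}{t-s}$ and $\beta=\frac{q(t)-q(s)}{t-s}$, we have $\mathcal{O}_{D^2(\gamma)}=\mb{C}\{t,s\}/(\alpha,\beta)$. Since $\alpha(0,s)=s^{m-1}$, Weierstrass Preparation makes $\mb{C}\{t,s\}/(\alpha)$ a free $\mb{C}\{t\}$-module of rank $m-1$ with basis $1,s,\dots,s^{m-2}$, so multiplication by $\beta$ yields the presentation
\[
\mb{C}\{t\}^{m-1}\xrightarrow{[\beta\,\cdot\,]}\mb{C}\{t\}^{m-1}\longrightarrow\pi'_{*}\mathcal{O}_{D^2(\gamma)}\longrightarrow 0.
\]
The $0$-th Fitting ideal is therefore generated by $\Res_s(\alpha,\beta)(t)$. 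Because $\mb{C}\{t\}$ is a DVR, this ideal equals $(t^n)$ up to a unit, where $n=\dim_{\mb{C}}\mathcal{O}_{D^2(\gamma)}$; after absorbing the unit into $d$, the lemma reduces to proving $n=\mu$.

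For the calculation of $n$, I would use the product-over-roots form of the resultant. The roots of $\alpha(t,\cdot)=0$ are $s=\omega t$ with $\omega^m=1$, $\omega\neq 1$, so from $\beta(t,\omega t)=(q(t)-q(\omega t))/((1-\omega)t)$ and $\prod_{\omega\neq 1}(1-\omega)=m$ one gets
\[
\Res_s(\alpha,\beta) \;=\; \frac{1}{m\,t^{m-1}}\prod_{\omega^m=1,\,\omega\neq 1}\bigl(q(t)-q(\omega t)\bigr).
\]
The numerator equals $F_Y(\gamma(t))$, where $F(X,Y)=\prod_{\omega^m=1}(Y-q(\omega X^{1/m}))$ is the Weierstrass polynomial defining the image branch $C=\gamma(\mb{C})$: all the terms of $F_Y(\gamma(t))=\sum_{\omega}\prod_{\omega'\neq\omega}(q(t)-q(\omega' t))$ with $\omega\neq 1$ contain a vanishing factor $q(t)-q(t)$.

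The last ingredient is Teissier's polar formula for an irreducible plane curve singularity of multiplicity $m$, namely $\ord_t F_Y(\gamma(t))=\mu+m-1$. Substituting gives $\ord_t\Res_s(\alpha,\beta)=(\mu+m-1)-(m-1)=\mu$, so $d(t)=t^\mu$ up to a unit, as claimed. The main obstacle is precisely this appeal to Teissier's identity, the one nontrivial external input; a self-contained substitute would have to bound $\ord_t(F_Y\circ\gamma)$ through the semigroup of $\gamma$ and match it to $\mu$ via Milnor's $\mu=2\delta$ for branches, which is more tedious than illuminating.
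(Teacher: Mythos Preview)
Your proof is correct and takes a genuinely different route from the paper's. The paper argues by deformation: pick an $\mathscr A$-stabilisation $\gamma_t$; for $t\neq 0$ the image has exactly $\delta$ nodes, so $D(\gamma_t)$ is $2\delta$ reduced points and the generator $d_t$ of its Fitting ideal is (up to a unit) a Weierstrass polynomial of degree $2\delta$ in $s$; letting $t\to 0$ collapses all roots to the origin, giving $d_0(s)=s^{2\delta}$, and Milnor's formula $\mu=2\delta$ for a branch finishes. Your approach is a direct computation: identify the Fitting ideal with a resultant, rewrite it as $F_Y\circ\gamma$ via the product-over-roots formula, and read off the order from Teissier's polar identity. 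Both arguments ultimately rest on $\mu=2\delta$ for an irreducible plane curve (Teissier's formula $\mathrm{ord}_t(F_Y\circ\gamma)=2\delta+m-1$ is a repackaging of it), so neither is strictly more elementary; the paper's version is softer and needs no explicit Puiseux or Weierstrass-polynomial manipulation, while yours is algorithmic and avoids choosing any deformation.

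One small inaccuracy worth flagging: when you invoke Teissier's formula ``for an irreducible plane curve singularity of multiplicity $m$'', the $m$ in your formula is the $Y$-degree of the Weierstrass polynomial $F$, equivalently $(C\cdot\{X=0\})_0$, not the multiplicity of the image curve, which is $\min(m,\mathrm{ord}\,q)$ and may be strictly smaller. The identity you actually need is the non-generic polar formula $(C\cdot\{F_Y=0\})_0=\mu+(C\cdot\{X=0\})_0-1$, and since $F(0,Y)=Y^m$ the second term is always $m$; so your computation $\mathrm{ord}_t(F_Y\circ\gamma)=\mu+m-1$ is correct as written, only the word ``multiplicity'' is misleading.
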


\begin{proof}
	Since $\gamma$ has an isolated singularity, it is $\ms{A}$-finite and we can find a stabilisation $\gamma_t$.
	Since $\gamma_t$ is stable, it only contains $\delta$ transversal double points.
	Each double point along $\gamma_t$ has two preimages, so $|D(\gamma_t)|=2\delta$ and the generating function $d_t$ of $D(\gamma_t)$ has degree $2\delta$.
	Therefore, we can write
		\[d_t(s)=a_{2\delta}(t)s^{2\delta}+a_{2\delta+1}(t)s^{2\delta+1}+\dots=a_{2\delta}(t)s^{2\delta}(1+R(s,t))\]
	where $a_{2\delta}(t)\neq 0$. In particular, for $t=0$, we have
		\[d_0(s)=a_{2\delta}(0)s^{2\delta}(1+R(s,0))\]
    The statement then follows from the fact that $\gamma(\mb{C},0)$ is irreducible, hence $\mu=2\delta$ by Milnor's formula (see for instance \cite{Teissier_Invariants}).
\end{proof}

Consider the projections\\
\begin{minipage}{0.45\textwidth}
	\begin{function}
		\pi_1\colon D^2(f) \arrow[r] & \mb{C}^2\\
		(x,y,y') \arrow[r,maps to] & (y,y')
	\end{function}
\end{minipage}%
\hfill%
\begin{minipage}{0.45\textwidth}
	\begin{function}
		\pi_1'\colon D(f) \arrow[r] & \mb{C}\\
		(x,y) \arrow[r,maps to] & y
	\end{function}
\end{minipage}
\noindent\\
These mappings verify that $\pi_1(D^2(f))=D^2(\gamma_t)$ and $\pi_1'(D(f))=D(\gamma_t)$, so we can write $D^2(f)=D^2(\gamma_t)\times_{D(\gamma_t)} D(f)$.
We can now apply \cite{MondNuno_Singularities} Proposition 11.6 to deduce that
	\[D(f)=M_1(\pi)=(\pi'_1)^{-1}(M_1(\pi'))=(\pi'_1)^{-1}(D(\gamma_t))\]
as complex space-germs.
Since $\pi'_1$ is a projection map, $D(\gamma_t)=\pi'_1(D(f))$ as complex space-germs.
If $D(f)$ is generated by $\lambda$ and $D(\gamma_t)$ is generated by $\kappa_t$,
\begin{equation}\label{eq: fibre product equation}
	\lambda=(\pi'_1)^*\kappa_t
\end{equation}

\begin{theorem}\label{thm: equation double point space}
	Let $f\colon (\mb{C}^2,0) \to (\mb{C}^3,0)$ be a generically injective, corank $1$ frontal map-germ.
	If $f$ is given in prenormal form, the generating function of $D(f)$ is given by $\lambda=\tau p_y^2$ for some $\tau \in \ms{O}_2$.
\end{theorem}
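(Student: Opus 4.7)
The plan is to realise the generator $\lambda$ of the Fitting ideal of $D(f)$ as the determinant of a multiplication map on a free $\mathcal{O}_2$-module, and then to extract two factors of $p_y$, each one corresponding to a use of the frontal identity $q_y = \mu p_y$. Writing $u = y' - y$, I will work with the divided differences
\[
P(x,y,y') = \frac{p(x,y')-p(x,y)}{u}, \qquad Q(x,y,y') = \frac{q(x,y')-q(x,y)}{u}
\]
in $\mathcal{O}_3$, so that $D^2(f) = V(P,Q)$. Taylor expansion in $u$ gives $P = p_y + O(u)$ and $Q = q_y + O(u)$, so the $u^0$-coefficient of $Q - \mu P$ is $q_y - \mu p_y = 0$; hence $Q - \mu P = uR$ for some $R\in\mathcal{O}_3$.

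Since $p$ is $y$-regular of some order $m$, by Weierstrass Preparation $P$ is $u$-regular of order $m-1$, and so $N := \mathcal{O}_3/(P)$ is a free $\mathcal{O}_2$-module of rank $m-1$ with basis $\{1, u, \dots, u^{m-2}\}$. From the preliminary section, $\lambda$ equals $\mathrm{Res}_u(P,Q)$ up to a unit, and this resultant is $\det(\lambda_Q)$, where $\lambda_Q\in\mathrm{End}_{\mathcal{O}_2}(N)$ denotes multiplication by $Q$. The congruence $Q \equiv uR\pmod P$ yields the factorisation $\lambda_Q = \lambda_u\circ\lambda_R$, and a direct computation on the chosen basis (the matrix of $\lambda_u$ is the companion matrix of the Weierstrass polynomial of $P$) shows $\det(\lambda_u) = \pm p_y$ up to a unit.

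The decisive step is to extract a second factor of $p_y$ by showing that $R$ is itself divisible by $u$ modulo $P$. The $u^0$-coefficient of $R$ is $(q_{yy}-\mu p_{yy})/2$; differentiating the frontal relation $q_y = \mu p_y$ once in $y$ gives $q_{yy}-\mu p_{yy} = \mu_y p_y$, so $R|_{u=0} = (\mu_y/2)\,p_y = (\mu_y/2)\,P|_{u=0}$. Hence $R - (\mu_y/2)P$ has vanishing $u^0$-coefficient, i.e.\ $R\equiv uS\pmod P$ for some $S\in\mathcal{O}_3$, and $\lambda_R = \lambda_u\circ\lambda_S$. Combining,
\[
\lambda \;=\; \det(\lambda_Q) \;=\; \det(\lambda_u)^2\,\det(\lambda_S) \;=\; \tau\,p_y^2
\]
with $\tau = \pm\det(\lambda_S)\in\mathcal{O}_2$.

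The main obstacle is this second-factor step. The first application of frontalness (the divisibility $Q-\mu P\in(u)$) is essentially the definition of $R$; the nontrivial point is that differentiating $q_y = \mu p_y$ once more in $y$ forces $R|_{u=0}$ itself to be a multiple of $p_y$, which is what produces the squared $p_y$ in $\lambda$ rather than only a linear factor. This is compatible with the slicing picture behind equation \eqref{eq: fibre product equation}: the same computation, specialised to a slice $\gamma_t$, shows that $p_y(t,y)^2$ divides the one-dimensional generator $\kappa_t$, and this pulls back to the stated factorisation of $\lambda$.
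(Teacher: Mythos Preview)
Your argument is correct and is genuinely different from the paper's. The paper proceeds geometrically: it works at a point $(x_0,y_0)\neq 0$, uses the injective pre-normal form (Theorem~\ref{thm: injective prenormal}) to reduce locally to $(x,y^m,y^mh)$, invokes Lemma~\ref{lemma: curva milnor divisor} and Milnor's bound $\mu\ge 2(m-1)$ for the slice $\gamma_t$ to see that $y^{2(m-1)}$ divides the local generator of $D(f)$, checks that $p_y$ is a unit times $y^{m-1}$ there, and finally appeals to Hartogs' Kugelsatz to extend $\tau=\lambda/p_y^2$ across the origin. Your route is purely algebraic and global at the origin: you present $\mathcal{O}_{D^2(f)}$ as $\operatorname{coker}(\lambda_Q)$ on the free module $N=\mathcal{O}_3/(P)$, factor $\lambda_Q=\lambda_u^2\circ\lambda_S$ by using the frontal relation $q_y=\mu p_y$ and its $y$-derivative, and read off $\det(\lambda_u)=(\text{unit})\cdot p_y$ from the companion matrix. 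This avoids Theorem~\ref{thm: injective prenormal}, the Milnor-number inequality, and Hartogs altogether, and it even yields an explicit description of $\tau$ as $\det(\lambda_S)$ up to a unit. The paper's approach, on the other hand, makes the connection with the topology of the slices transparent and is what feeds into the later Milnor-number computations.

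One small point worth making explicit: you assume that $p$ is $y$-regular of some order $m$ so that Weierstrass preparation applies to $P$. This is not part of the definition of prenormal form, but it does follow from finiteness of $f$ (if $p(0,y)\equiv 0$ then $p_y\mid q_y$ forces $q(0,y)\equiv 0$ as well, contradicting $f^{-1}(0)=\{0\}$), and finiteness is implicit in the whole Fitting-ideal set-up for $D(f)$. Stating this would make your proof self-contained.
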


\begin{definition}
	We define the \textbf{cuspidal edge} and \textbf{transverse double point} sets of $f$ as
	\begin{align*}
		C(f)=V(p_y); && D_+(f)=V(\tau)
	\end{align*}
\end{definition}

To prove this result, we need to give a lower bound for the Milnor number of a complex, irreducible plane curve $(Y,0)$.
By \cite{deJongPfister}, we can find an $\alpha > 0$ and a $h \in \ms{O}_1$ with $\ord_0(h) > \alpha$ such that $(Y,0)$ is the image of the curve-germ $\gamma\colon (\mb{C},0) \to (\mb{C}^2,0)$ given by $\gamma(t)=(t^\alpha,h(t))$.

\begin{remark}\label{remark: cota milnor}
	With the notation above, let $D_1=\alpha$ and $D_{j+1}$ be the greatest common divisor of $D_j$ and $\beta_j$.
	We have the following identity for $\mu(Y,0)$ due to Milnor (\cite{Milnor_MilnorNumber} Remark 10.10):
		\[\mu(Y,0)=\sum_{j \geq 1}(\beta_j-1)(D_j-D_{j+1})\]
	It is easy to see that, for $\alpha > 1$, $\mu(Y,0) \geq 2(\alpha-1)$.
\end{remark}

\begin{proof}[Proof of Theorem \ref{thm: equation double point space}]
	Let $f\colon N \to Z$ be a representative of $f$ and $f'$ the germ of $f$ at $(x_0,y_0)\in N\backslash \{(0,0)\}$.
	By Theorem \ref{thm: injective prenormal}, $f'$ is $\ms{A}$-equivalent to a germ in the form
		\[g(x,y)=(x,y^m,y^mh(x,y))\]
	where $m \geq 2$ and $h \in \mc{O}_2$.
	The slices of $g$ are then given by the curves $\gamma_t(y)=(y^m,y^mh(t,y))$.
	If $\sigma$ and $\kappa_t$ are the generating functions for $D(g)$ and $D(\gamma_t)$, we can apply Equation \eqref{eq: fibre product equation} to obtain $\sigma=(\pi'_1)^*\kappa_t$.
	By Lemma \ref{lemma: curva milnor divisor}, $\kappa_t(y)=y^\mu$, where $\mu$ is the Milnor number of $\gamma_0$.
	Using Remark \ref{remark: cota milnor},
		\[\sigma(t,y)=y^\mu \implies y^{2(m-1)}|\sigma(t,y)\]

	Throughout the proof of Theorem \ref{thm: injective prenormal}, we see that $p(x,y)-p_0(x)=y^mr(x,y)$, where $p_0(x)=p(x,y_0)$ and $r \in \mc{O}_2$ is a unit.
	Taking derivatives,
		\[p_y(x,y)=y^{m-1}(yr_y(x,y)+mr(x,y))=S(x,y)y^{m-1}\]
	where $S\in \ms{O}_2$ is again a unit.
	Since $g$ is $\ms{A}$-equivalent to $f'$, the germ of $p(x,y)$ at $(x_0,y_0)$ is $\ms{A}$-equivalent to $y^m$.
	If $\eta$ is the generating function for $D(f')$,
		\[\eta(x,y)=r(x,y)y^{2(m-1)}=r(x,y)S^2(x,y)p_y^2(x,y)\]
	If $\lambda$ is the generating function of $D(f)$, $\tau=\lambda/p_y^2$ is a holomorphic function with at most an isolated singularity in $N$.
	Nonetheless, Hartogs' Kugelsatz guarantees that $\tau$ can be uniquely extended onto $(\mb{C}^2,0)$.
	
	We conclude that $\lambda=\tau p_y^2$.
\end{proof}

\begin{table}[ht]
\centering
\begin{tabular}{l|C|C|C}
	\hline
	Singularity		& \text{Parametrization}	& C 		& D_+		\\
	\hline
	\hline
	Cuspidal edge\M	& (x,y^2,y^3)				& y 		& \text{---}\\
	\hline
	\multirow{2}{*}{\makecell[l]{Transverse\\double point}}
				\T	& (x,y,0)					& \text{---}& y 		\\
				\B	& (0,x',y')					& \text{---}& x' 		\\
	\hline
	\makecell[l]{Folded Whitney \T\\umbrella}
				\B	& (x,y^2,xy^3)				& y			& x			\\
	\hline
	Swallowtail	\M	& (x,y^3+3xy,y^4+2xy^2)		& x+y^2		& 3x+y^2	\\
	\hline
	\multirow{2}{*}{\makecell[l]{Cuspidal\\ double point}}
				\T	& (0,x,y)					& \text{---}& x^3-y^2 	\\
				\B	& (x',(y')^2,(y')^3)		& y'		& x'		\\
	\hline
	\multirow{3}{*}{\makecell[l]{Transverse\\triple point}}
				\T	& (x,y,0)					& \text{---}& xy		\\
					& (x',0,y')					& \text{---}& x'y'		\\
				\B	& (0,x'',y'')				& \text{---}& x''y''	
\end{tabular}
\caption{Double point space of the stable frontal surface singularities.
\label{table: stable double point space}}
\end{table}

\begin{example}[Frontalised fold surface]\label{ex: fold double point}
	Let $f\colon (\mb{C}^2,0) \to (\mb{C}^3,0)$ be the fold surface
		\[f(x,y)=(x,y^2,y^3h(x,y^2))\]
	Using Theorem \ref{thm: equation double point space}, we have $D_+(f)=V\left(h(x,y^2)\right)$, which coincides with the double point curve of an $\ms{A}$-finite fold surface.
\end{example}

We now finish this section with a characterisation of $\ms{F}$-finite surfaces in terms of $C(f)$ and $D_+(f)$:
	
\begin{proposition}\label{prop: finite frontal type}
	Let $f\colon (\mb{C}^2,S) \to (\mb{C}^3,0)$ be a corank $1$ frontal surface in prenormal form.
	\begin{enumerate}
		\item If $D(f)$ is generated by $\lambda \in \ms{O}_2$ and $\lambda/p_y$ is regular, $f$ is either a cuspidal edge or a curve of transverse double points. \label{item: cusp transverse}
		\item If $V(p_y,\mu_y)=\{0\}$, $f$ is $\ms{F}$-finite if and only if the critical set of $\lambda/p_y$ is an isolated subset of $(\mb{C}^2,0)$.
	\end{enumerate}
\end{proposition}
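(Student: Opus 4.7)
Both parts exploit the factorisation $\lambda = \tau p_y^2$ from Theorem~\ref{thm: equation double point space}, which rewrites the hypotheses in terms of the function $\tau p_y$. I read ``$\lambda/p_y$ regular at $0$'' as the condition that $\tau p_y$ has non-vanishing gradient at $0$, and ``critical set of $\lambda/p_y$ isolated'' as the condition that the singular locus of $V(\tau p_y) = C(f) \cup D_+(f)$ is contained in $\{0\}$ as a set-germ; both readings match the stable-germ data of Table~\ref{table: stable double point space}.

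For part~\itref{item: cusp transverse}, I split cases on $p_y(0)$. If $p_y(0) \neq 0$, then $p_y$ is a unit and regularity of $\tau p_y$ forces $\tau$ itself regular at $0$; the curve $D_+(f) = V(\tau)$ is then smooth through $0$, and along it the immersive $f$ produces transverse double points, giving the asserted curve of transverse double points. If $p_y(0) = 0$, evaluating at the origin gives $\nabla(\tau p_y)|_0 = \tau(0)\,\nabla p_y(0)$, so regularity forces $\tau(0) \neq 0$ (so $D_+(f)$ misses $0$) and $\nabla p_y(0) \neq 0$ (so $C(f)$ is smooth at $0$). Straightening $V(p_y)$ to $\{y=0\}$ by a source diffeomorphism and applying the Weierstrass Preparation Theorem to $p$ and $q$, aided by the frontal relation $q_y = \mu p_y$, reduces $f$ to the cuspidal-edge normal form $(x,y^2,y^3)$. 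Competing germs with smooth $C(f)$, such as the folded Whitney umbrella and the swallowtail, are excluded because for them $\tau(0) = 0$, which kills the regularity of $\tau p_y$.

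For part~(2), I invoke the frontal Mather–Gaffney criterion (Theorem~\ref{thm: frontal mather gaffney}), which is available under the hypothesis $V(p_y,\mu_y) = \{0\}$: $f$ is $\ms{F}$-finite if and only if it admits a finite representative that is locally $\ms{F}$-stable off $0$. In the forward direction, at each $P \neq 0$ on $C(f) \cup D_+(f)$, local $\ms{F}$-stability forces a cuspidal-edge or transverse-double-point configuration, so one of $\tau, p_y$ is a unit at $P$ and the other has a simple zero there; hence $V(\tau p_y)$ is smooth at $P$, keeping $P$ out of the singular locus. Conversely, if the singular locus of $V(\tau p_y)$ is contained in $\{0\}$, then at each $P \neq 0$ the hypothesis of part~\itref{item: cusp transverse} is satisfied, giving local $\ms{F}$-stability off $0$; the remaining finiteness of $f^{-1}(0)$ follows from $V(p_y,\mu_y) = \{0\}$ by precluding double points accumulating to $0$.

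The principal obstacle is the cuspidal-edge identification in the case $p_y(0) = 0$ of part~\itref{item: cusp transverse}: regularity of $\tau p_y$ only delivers the set-theoretic information that $C(f)$ is smooth and $D_+(f)$ is empty near $0$, and promoting this to the explicit normal form requires careful use of the frontal relation $q_y=\mu p_y$ together with the Weierstrass step. Once part~\itref{item: cusp transverse} is in place, part~(2) is a clean packaging of Mather–Gaffney with the classification of stable frontal germs.
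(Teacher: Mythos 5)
Your part (2) follows the paper's route: both implications are the frontal Mather--Gaffney criterion (Theorem \ref{thm: frontal mather gaffney}) combined with part \itref{item: cusp transverse}, and your reading of ``regular'' and of ``critical set'' agrees with how the paper uses them. (Two small points there: in the forward direction one must also shrink the representative so that the $0$-dimensional $\ms{F}$-stable singularities -- swallowtails, folded umbrellas, cuspidal double and triple points, at which $\tau p_y$ is \emph{not} regular -- are pushed away from the origin, as the paper does when it says ``we can choose $N$ so that\dots''; and in the $p_y(0)\neq 0$ case of part \itref{item: cusp transverse} the identification of the reduced smooth $V(\tau)$ with a genuinely \emph{transverse} crossing of two immersed sheets deserves a sentence, though the paper is equally terse about it.)

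The genuine gap is in the case $p_y(0)=0$ of part \itref{item: cusp transverse}. From regularity you correctly extract $\tau(0)\neq 0$ and $\nabla p_y(0)\neq 0$, but the step ``straighten $V(p_y)$ to $\{y=0\}$ by a source diffeomorphism and reduce to $(x,y^2,y^3)$ via Weierstrass and $q_y=\mu p_y$'' does not go through as stated: to keep the prenormal form (first component equal to $x$), the straightening must have the shape $(x,y)\mapsto (x,Y(x,y))$, and such a fibre-preserving diffeomorphism exists only when $V(p_y)$ is transverse to the lines $x=\mathrm{const}$, i.e.\ $p_{yy}(0)\neq 0$. Smoothness of $V(p_y)$ only gives $(p_{xy}(0),p_{yy}(0))\neq(0,0)$; if $p_{yy}(0)=0$ the singular curve is tangent to the kernel direction, the slice $\gamma_0(y)=(p(0,y),q(0,y))$ has multiplicity at least $3$ (note $q_y=\mu p_y$ then also vanishes to order $\geq 2$ on $x=0$), and $f$ is not a fold-type surface in these coordinates, so the parity/Weierstrass reduction to $(x,y^2,y^3h(x,y^2))$ with $h$ a unit never starts. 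What has to be proved is that this tangential situation is incompatible with $\tau(0)\neq 0$, and that is exactly the substantive content of the paper's proof: it first deduces injectivity, applies the injective pre-normal form (Theorem \ref{thm: injective prenormal}), whose proof uses a generic plane to make the singular set transverse to the fibres, and then combines Lemma \ref{lemma: curva milnor divisor} with the bound $\mu\geq 2(m-1)$ of Remark \ref{remark: cota milnor} to force $m=2$ and $\mu=2$, so the slice is a cusp and $f$ is a trivial frontal unfolding of it, hence a cuspidal edge. Some argument of this slice/Milnor-number type (for instance: multiplicity $\geq 3$ of $\gamma_0$ forces $\delta\geq 2$, hence $\tau(0)=0$) must be added to your write-up; the frontal relation plus the Weierstrass step, which is all the extra work you budget for in your closing paragraph, does not supply it.
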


A consequence of this statement is that a frontal surface is $\ms{F}$-finite if and only if it only contains cuspidal edges and transversal double points outside the origin.
The six stable frontal surfaces (excluding immersions) are listed on Table \ref{table: stable double point space}, and can be seen on Figure \ref{fig: stable_frontals}.

\begin{proof}[Proof of Proposition \ref{prop: finite frontal type}]
	For the first item, let us assume $\lambda/p_y$ is a regular function.
	If $f$ has a singularity at $0$, $D(f)$ has at least a branch of non-immersive points.
	Regularity of $\lambda/p_y$ implies that this is the only branch of $D(f)$, and therefore $f$ does not have any self-intersections.
	Therefore, $f$ is $1$-to-$1$.

	By Theorem \ref{thm: injective prenormal}, a suitable change of coordinates in the source and target allows us to claim that $p(x,y)=y^m$ and $q(x,y)=y^mh(x,y)$ for some $m \geq 2$ and $h \in \ms{O}_2$.
	In these conditions, Lemma \ref{lemma: curva milnor divisor} tells us that $\lambda(x,y)=y^\mu$, $\mu$ being the Milnor number of the slice $\gamma_0$.
	Regularity of $\lambda/p_y$ then implies that $\delta=1$ and hence $\mu=2$.
	However, since $\mu \geq 2(m-1)$ (Lemma \ref{remark: cota milnor}), we then have that $m=2$.
	Therefore, $\gamma_0$ is a cusp and $f$ can be seen as a $1$-parameter frontal unfolding of $\gamma_0$.

	Since the cusp is stable as a frontal, $f$ is $\ms{A}$-equivalent to the trivial unfolding of the cusp, which is a cuspidal edge.

	If $f$ is not $1$-to-$1$, there is at least one transverse double point in the image, so $D_+(f)\neq\emptyset$.
	Once again, regularity of $\lambda/p_y$ implies that there are no branches of non-immersive points.

	For the second item, let us first assume $f$ is $\ms{F}$-finite.
	By Theorem \ref{thm: frontal mather gaffney}, there exists a representative $f\colon N \to Z$ of $f$ such that $f^{-1}(\{0\})=S$ and the restriction $\tilde f\colon N\backslash S \to Z\backslash\{0\}$ is locally stable as a frontal.
	In particular, we can choose $N$ so that $\tilde f(N\backslash S)$ only contains transverse double points and cuspidal edges at most.

	Since $f$ is a corank $1$ frontal surface, we can take coordinates $(x,y)$ on $N$ and $(X,Y,Z)$ on $Z$ such that $f(x,y)=(x,p(x,y),q(x,y))$ with $p_y|q_y$.
	If $\lambda=0$ is the reduced equation for $D(f)$, $\lambda/p_y\colon N\backslash S \to \mb{C}$ is a regular function, from which it follows that the critical set of $\lambda/p_y$ is contained within $S$.
	Nonetheless, as $S$ is an isolated subset of $N$, so is the critical set of $\lambda/p_y$.

	Conversely, let us assume that the critical set $C$ of $\lambda/p_y$ is an isolated subset of $N$.
	This means that $\lambda/p_y\colon N\backslash C \to \mb{C}$ is a regular function.
	By the previous item, this implies that $f\colon N\backslash C \to Z$ is either a cuspidal edge or a curve of transverse double points, both of which are $\ms{F}$-stable singularities.
	Using Theorem \ref{thm: frontal mather gaffney} once again, we conclude that $f$ is $\ms{F}$-finite.
\end{proof}

\begin{figure}[ht]
\centering
	\includegraphics[width=0.3\textwidth]{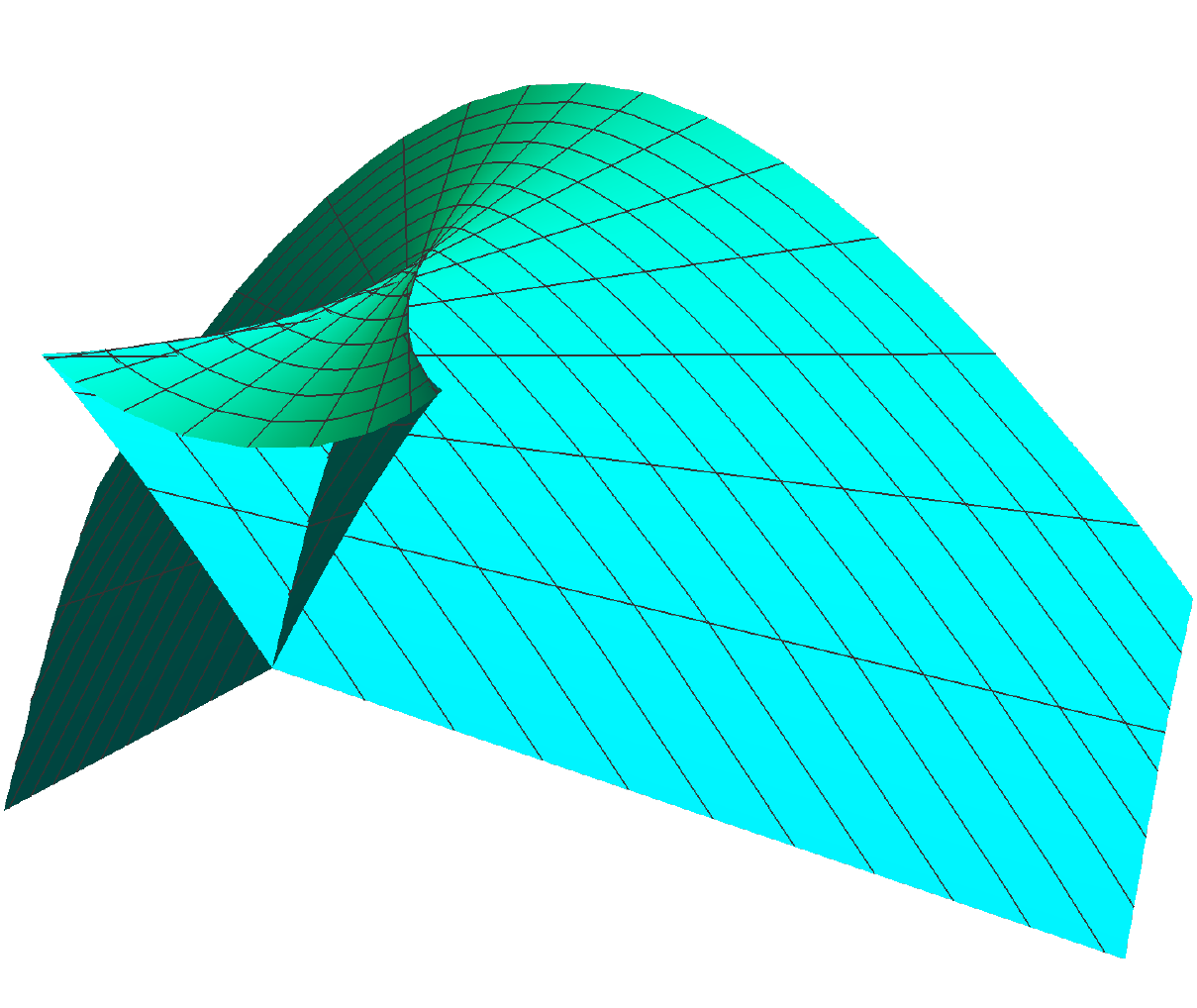}
	\includegraphics[width=0.3\textwidth]{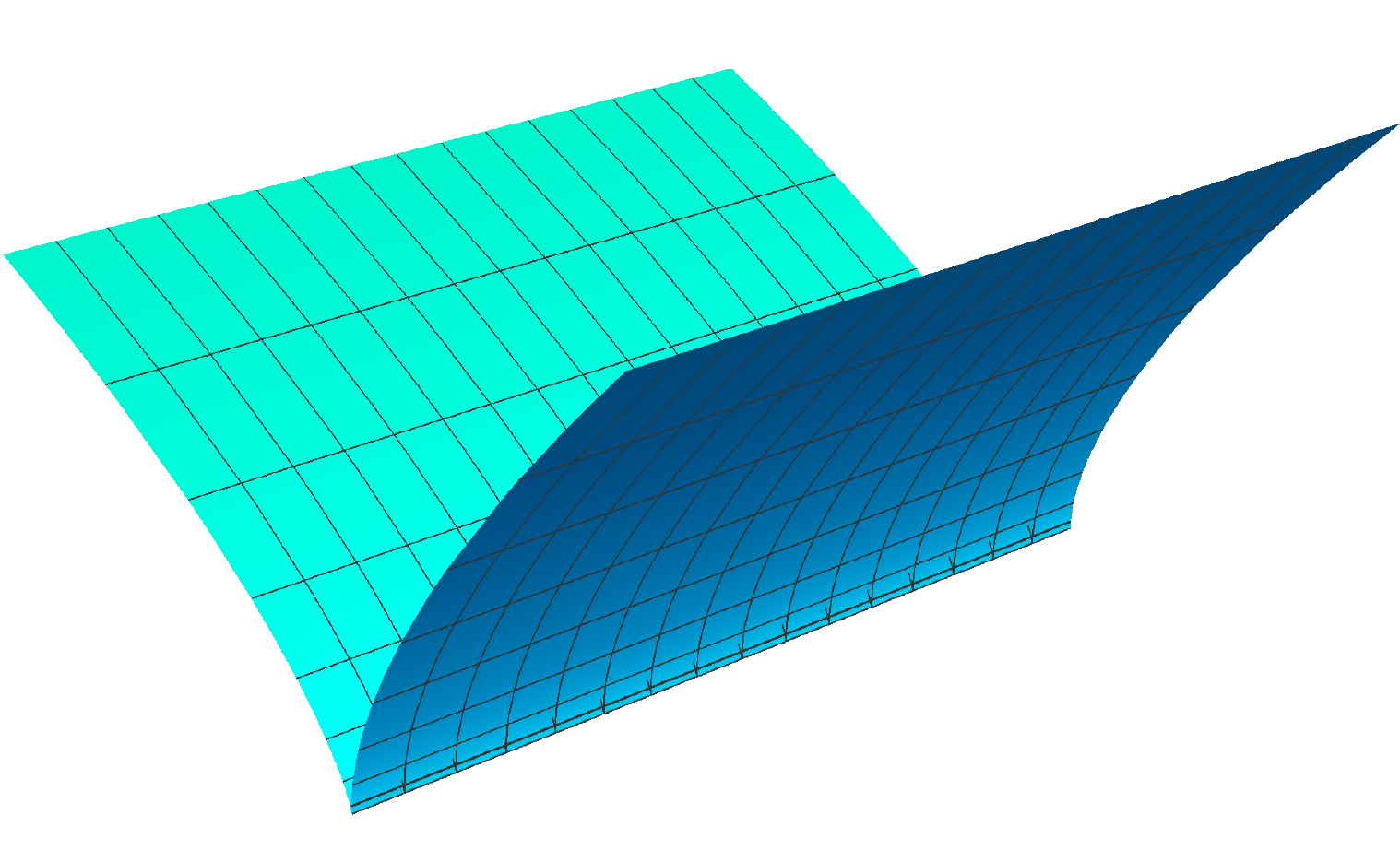}
	\includegraphics[width=0.3\textwidth]{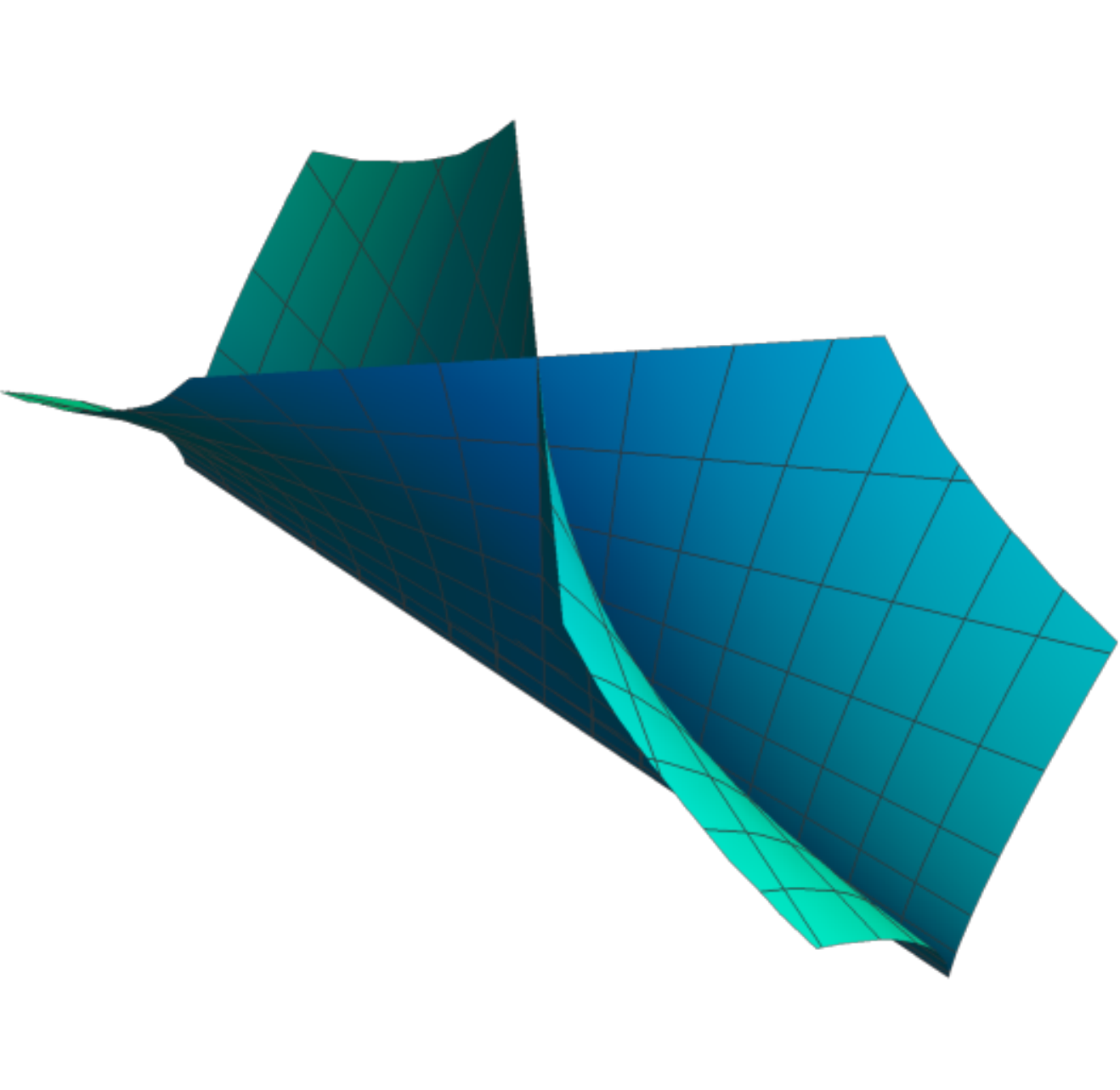}
	
	\includegraphics[width=0.3\textwidth]{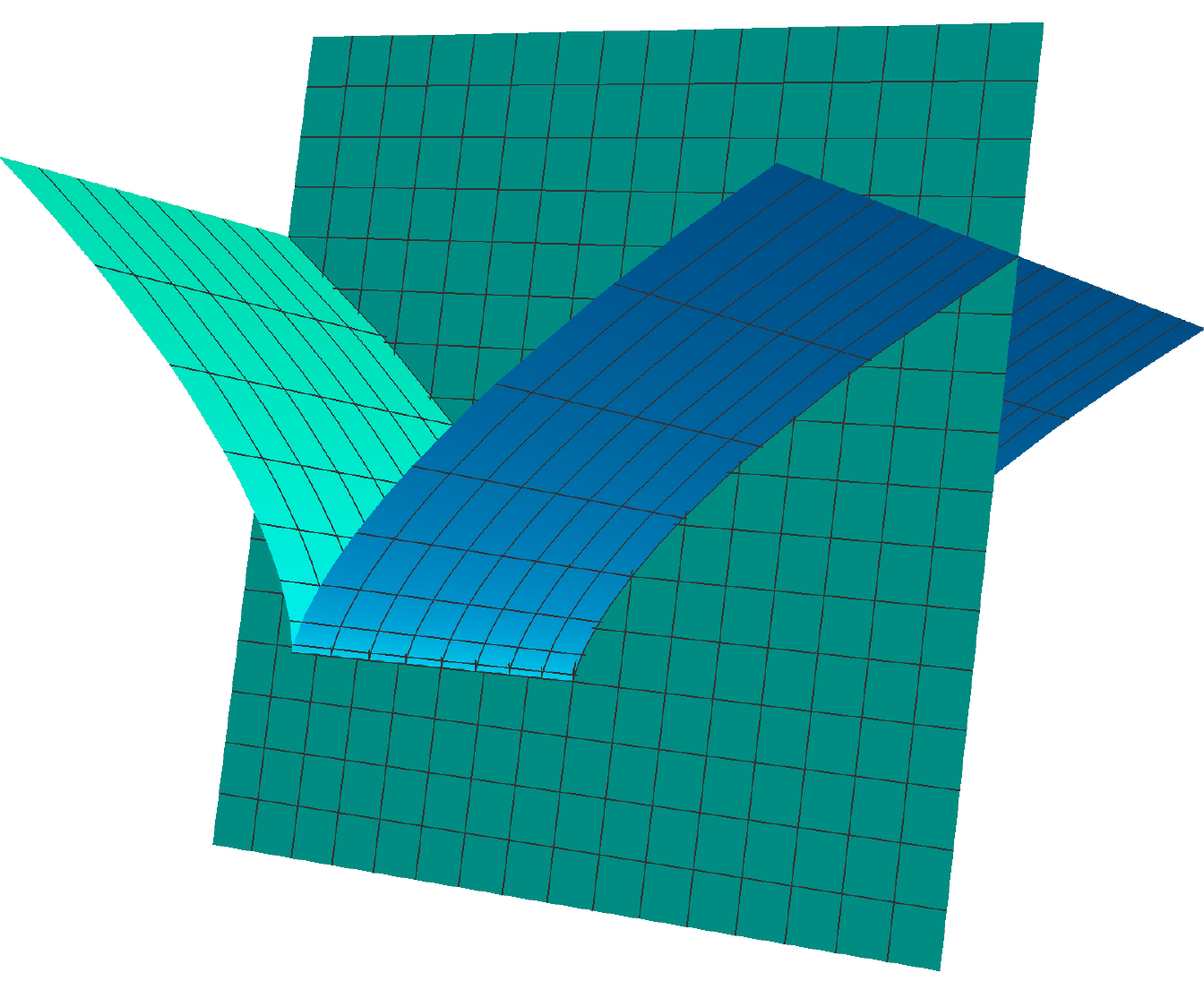}
	\includegraphics[width=0.3\textwidth]{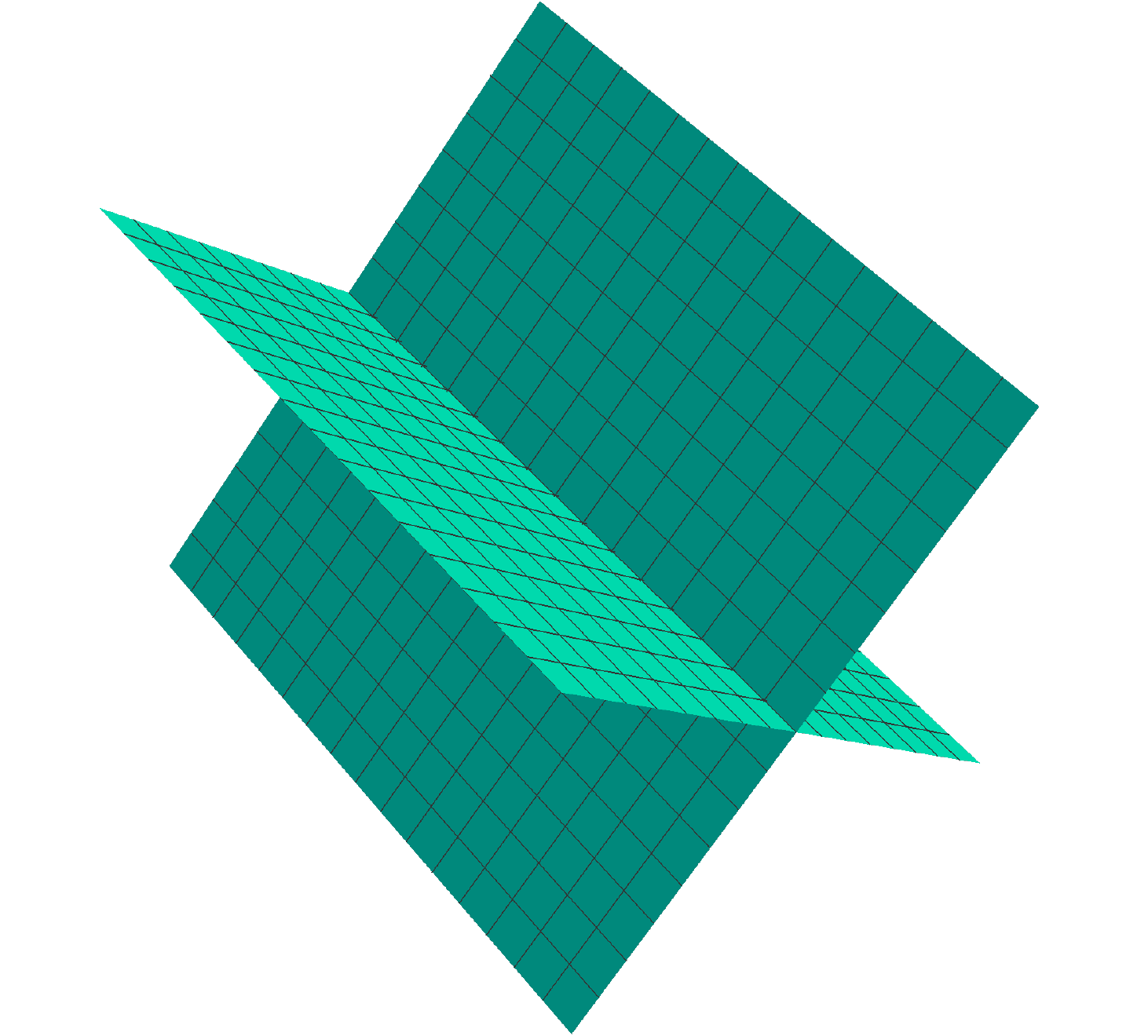}
	\includegraphics[width=0.3\textwidth]{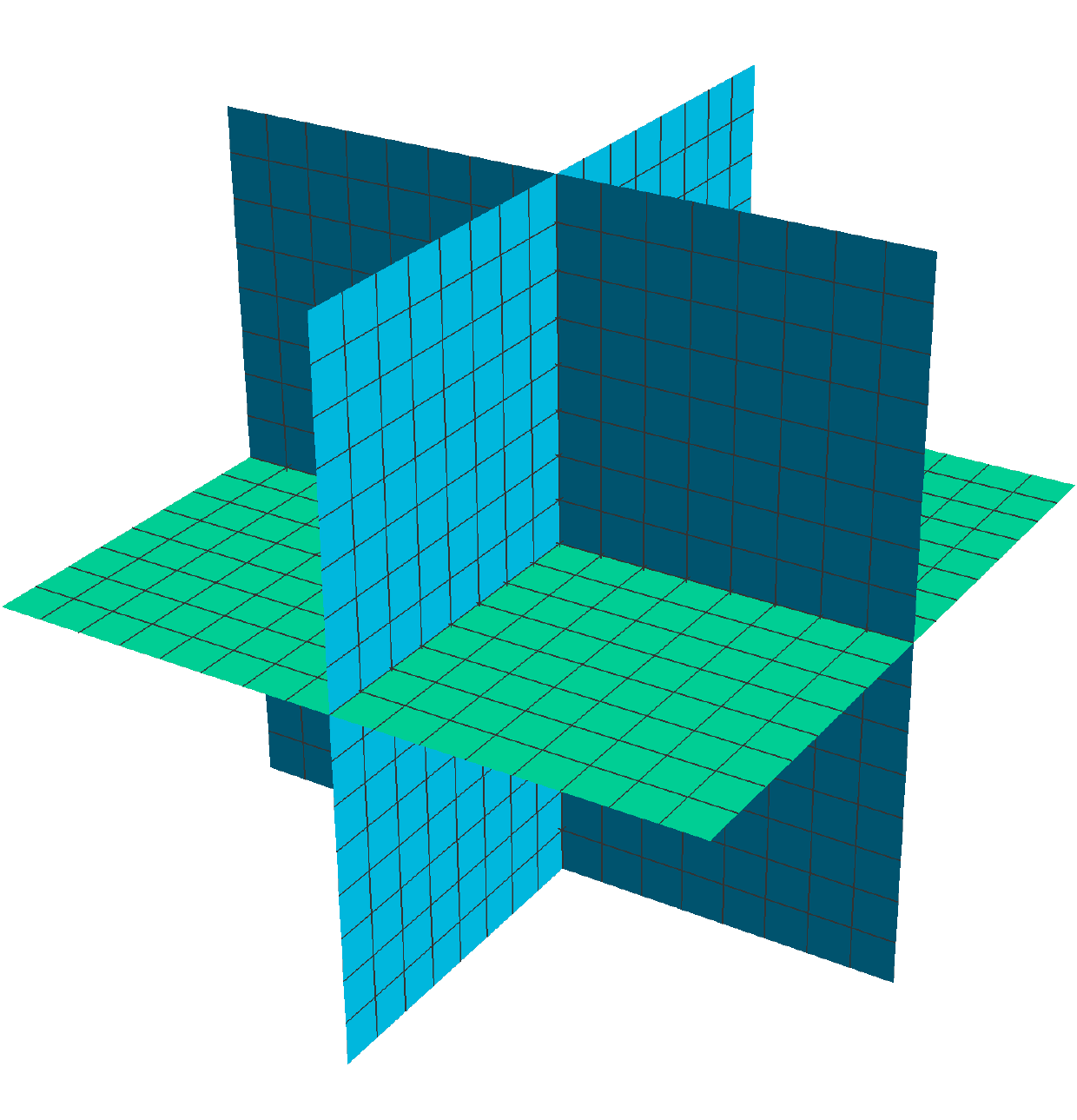}
\caption{Stable frontal surface singularities. \label{fig: stable_frontals}}
\end{figure}

\section{Vanishing homology of a frontal surface}
The formulas from Proposition \ref{prop: S W K T} and Theorems \ref{thm: marar mond} and \ref{thm: milnorf} have been implemented in a \textsc{Singular} library, which can be found at \cite{Singular_frontals}.
	Let $f\colon (\mb{C}^2,S) \to (\mb{C}^3,0)$ be an $\ms{A}$-finite multi-germ, and $f\colon N \to Z$ be a representative of $f$.
A \textbf{stabilisation} of $f$ is a holomorphic family $f_t\colon N_t \to Z$ with $f_0=f$ and $t \in D\subset \mb{C}$ such that the unfolding $F=(f_t,t)$ is stable if $t\neq 0$.
The topological space $\Delta(f)=f_t(N_t)$ is known as the \textbf{disentanglement} of $f$.

Mond \cite{Mond_VanishingCycles} proved that, for a small enough $t \neq 0$, the space $\Delta(f)$ has the homology type of a bouquet of $2$-spheres, the number of which is known as the \textbf{image Milnor number} of $f$ (denoted as $\mu_I(f)$).
As a consequence, the reduced homology of $\Delta(f)$ is known as the \emph{vanishing homology} of $f$.

\subsection{Frontal disentanglement}
\begin{definition}
	Let $f\colon (\mb{C}^n,S) \to (\mb{C}^{n+1},0)$ be a frontal map-germ.
	A smooth family of frontal germs $(f_t)$ is an $\ms{F}$-\textbf{stabilisation} of $f$ if the $1$-parameter unfolding $F=(f_t,t)$ is frontal and $f_t$ is frontal stable for $t \neq 0$.
	Given $t\neq 0$, the \textbf{frontal disentanglement} of $f$ is defined as
		\[\Delta_{\ms{F}}(f)=f_t(\mb{C}^n,S)\]
\end{definition}

Let $f\colon (\mb{C}^2,0) \to (\mb{C}^3,0)$ be a corank $1$ frontal surface with isolated frontal instability (i.e., such that $f(N\backslash\{0\})$ contains at most the singularities described in Table \ref{table: stable double point space}).
By shrinking $N$ if necessary, we can assume that $f(N\backslash\{0\})$ only contains cuspidal edges and transverse double points.

Let $N_0=\{y\in \mb{C}: (x,y) \in N\}$.
We consider the slice $\gamma_0\colon N_0 \to \gamma(N_0)$ of $f$ at $x=0$, given by $\gamma_0(t)=(p(0,t),q(0,t))$.
The assumption that $f$ only has at most cuspidal edges and transverse double points outside the origin implies that the origin is an isolated point in the singular set of $\gamma$.
Using the Mather-Gaffney criterion for $\ms{A}$-equivalence, it follows that $\gamma$ is $\ms{A}$-finite.
However, since $\ms{F}(\gamma)\subseteq \theta(\gamma)$,
	\[\frac{\ms{F}(\gamma)}{T\ms{A}_e\gamma} \subseteq \frac{\theta(\gamma)}{T\ms{A}_e\gamma} \implies \codim_{\ms{F}_e}(\gamma) \leq \codim_{\ms{A}_e}(\gamma) < \infty\]
and thus $\gamma$ is $\ms{F}$-finite.
Therefore, it admits a versal $d$-parameter frontal unfolding, $\Gamma\colon U \to V$.

Since $\Gamma$ is a versal unfolding, it is in particular a stable frontal unfolding of $\gamma$.
Since $f$ is a $1$-parameter unfolding of $\gamma$, $f\times \id$ is a $(d-1)$-parameter unfolding of $\gamma$.
By stability of $\Gamma$, we then have that $f\times \id$ is $\ms{A}$-equivalent to $\Gamma$, from which follows that $\Gamma$ is a stable frontal unfolding of $f$. We then have the following
\begin{proposition}
	Every corank $1$ frontal $f\colon (\mb{C}^2,S) \to (\mb{C}^3,0)$ with isolated $\ms{F}$-instability admits an $\ms{F}$-stabilisation $(f_t)$.
\end{proposition}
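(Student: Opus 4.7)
The plan is to exploit the $(d-1)$-parameter stable frontal unfolding of $f$ that is implicitly constructed in the paragraph preceding the statement: since the slice curve $\gamma_0$ is $\ms{F}$-finite, it admits a versal $d$-parameter frontal unfolding $\Gamma$, and by stability of $\Gamma$ the map $f\times\id_{(\mb{C}^{d-1},0)}$ is $\ms{A}$-equivalent to $\Gamma$. This produces a $(d-1)$-parameter stable frontal unfolding $F\colon(\mb{C}^2\times\mb{C}^{d-1},0)\to(\mb{C}^3\times\mb{C}^{d-1},0)$ of $f$ with $F_0=f$. The remaining task is to cut this down to a one-parameter subfamily whose nonzero fibres are all frontal stable.

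To do this, I would consider the \emph{frontal bifurcation germ} $B\subseteq(\mb{C}^{d-1},0)$ consisting of those parameters $u$ for which $F_u$ fails to be frontal stable. Frontal stability is characterised infinitesimally by the equality $\ms{F}(F_u)=T\ms{A}_eF_u$, stated earlier in the paper; since this condition is upper semicontinuous in the parameter, $B$ is a closed analytic subset. Because $F$ is itself stable as a frontal, its generic fibres must be stable, so $B$ is a \emph{proper} subgerm of $(\mb{C}^{d-1},0)$.

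Granted this, I choose a generic holomorphic arc $\varphi\colon(\mb{C},0)\to(\mb{C}^{d-1},0)$ with $\varphi^{-1}(B)=\{0\}$, possible because $B$ has positive codimension, and set $f_t:=F_{\varphi(t)}$. Then $(f_t,t)$ is a $1$-parameter unfolding of $f$ with $f_0=f$ and $f_t$ frontal stable for every $t\neq 0$ near the origin; it is still frontal because the Nash lift of $F$ pulls back along the smooth arc $\varphi$ to an integral lift of $(f_t,t)$. This is precisely the definition of an $\ms{F}$-stabilisation.

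The main obstacle is rigorously justifying that the bifurcation germ $B$ is a proper analytic subset of the base, i.e.\ the genericity of $\ms{F}$-stability within a stable unfolding. This is analogous to the classical fact that a stable $\ms{A}$-unfolding has generically stable fibres, and is expected to follow from the Thom--Mather machinery for frontals developed in the companion paper \cite{Frontals}; once that statement is in hand, the transversality argument producing $\varphi$ is routine.
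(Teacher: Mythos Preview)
Your proposal is correct and follows essentially the same route as the paper: the paragraph preceding the proposition constructs the stable frontal unfolding $\Gamma$ of $f$ exactly as you describe, and then simply states the proposition, leaving implicit the standard step of cutting down to a one-parameter family by a generic arc in the base avoiding the frontal bifurcation set. You have made that final step explicit and correctly identified the one point requiring input from the companion paper \cite{Frontals}, namely that the frontal instability locus is a proper analytic subset of the base of a stable unfolding.
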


Let $f\colon (\mb{C}^2,S) \to (\mb{C}^3,0)$ be a corank $1$ frontal with isolated frontal instability, and let $(f_t)$ be an $\ms{F}$-stabilisation of $f$.
By Theorem \ref{thm: frontal mather gaffney}, if $V(p_y,\mu_y)=\{0\}$ for all $t$, we can find a representative of $\Delta_{\ms{F}}(f)$ that only contains stable frontal singularities (see Table \ref{table: stable double point space}).
We then set the following notation:
\begin{itemize}
	\item $S$: number of swallowtails;
	\item $K$: number of cuspidal double points;
	\item $T$: number of transversal triple points;
	\item $W$: number of folded Whitney umbrellas.
\end{itemize}

Given $h \in \ms{O}_2$, $h_y(x,y)$ is the limit when $y'$ goes to $y$ of the divided difference $h[x,y,y'] \in \ms{O}_3$ (see \S \ref{doublepoints}), so there exist functions $\alpha, \beta \in \ms{O}_3$ such that $ h[x,y,y']\equiv \alpha_y(x,y,y')(y-y') \mod h_y$.
If we now consider $p,q \in \ms{O}_2$ with $p_y|q_y$, we have
    \begin{align*}
	    p[x,y,y'] \equiv \alpha(x,y,y')(y-y') \mod p_y;\\
	    q[x,y,y'] \equiv \alpha'(x,y,y')(y-y') \mod p_y,
	\end{align*}
and thus we consider the ideal $(p_y,\alpha,\alpha') \subseteq \ms{O}_3$.

\begin{proposition}\label{prop: S W K T}
	Let $f\colon (\mb{C}^2,0) \to (\mb{C}^3,0)$ be a frontal mono-germ with isolated instability in the form $f(x,y)=(x,p(x,y),q(x,y))$.
	If the transversal point curve of $f$ is $\tau \in \ms{O}_2$,
	\begin{align*}
		P_3=\dim_{\mb{C}}\frac{\ms{O}_2}{(p_y,p_{yy})}			&=S;		&
		PT=\dim_{\mb{C}}\frac{\ms{O}_2}{(p_y,\tau)}			&=2S+K+W;	\\
		PAA'=\dim_{\mb{C}}\frac{\ms{O}_3}{(p_y,\alpha,\alpha')}	&=2S+K;		&
		F_3=\dim_{\mb{C}}\frac{\ms{O}_3}{\ms{F}_2(f)}			&=T+S+K.
	\end{align*}
\end{proposition}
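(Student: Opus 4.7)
The plan is to apply a standard conservation-of-multiplicity argument to a frontal stabilisation $f_t$ of $f$. By Theorem \ref{thm: frontal mather gaffney} and our hypothesis of isolated frontal instability, for $t\neq 0$ the map $f_t$ has only the stable frontal singularities catalogued in Table \ref{table: stable double point space}: cuspidal edges, folded Whitney umbrellas, swallowtails, transverse double points, cuspidal double points, and transverse triple points. Each of the four ideals in the statement defines a zero-dimensional scheme by hypothesis, hence a complete intersection in its local ring, and its length is conserved under the deformation. Consequently $P_3$, $PT$, $PAA'$ and $F_3$ can each be computed as a sum of local contributions at the stable singularities of $f_t$ (summed over source points, $D^2$-preimages, and target points respectively).

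The heart of the proof is to read off these local contributions from the normal forms in Table \ref{table: stable double point space}. For $P_3=\dim\mathcal{O}_2/(p_y,p_{yy})$, the contribution is $0$ at cuspidal edges and folded Whitney umbrellas (where $p=y^2$, so $p_{yy}=2$ is a unit), and $1$ at a swallowtail, since $(p_y,p_{yy})=(3y^2+3x,\,6y)=(x,y)$. Multi-germ singularities contribute only through their cuspidal-edge or immersion branches, both of which yield zero, giving $P_3=S$. For $PT=\dim\mathcal{O}_2/(p_y,\tau)$, reading off $\tau$ from the last column of the table yields $(p_y,\tau)=(y,x)$ at folded Whitney umbrellas (contribution $1$), $(y^2+x,\,3x+y^2)=(x,y^2)$ at swallowtails (contribution $2$), and $(y',x')$ at the cuspidal-edge branch of a cuspidal double point (contribution $1$); the immersive branch of a cuspidal double point and every branch of a transverse triple point have $p_y$ a unit, contributing zero. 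Summing gives $PT=2S+K+W$.

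For $PAA'=\dim\mathcal{O}_3/(p_y,\alpha,\alpha')$, the ideal lives on the source $D^2(f_t)$ and its zero-locus consists of points of $D^2(f_t)$ lying over the cuspidal curve whose divided differences $\alpha,\alpha'$ vanish as well. Computing with the prenormal forms one checks that this happens with local multiplicity $2$ at a swallowtail (where the cuspidal curve has a self-intersection in $D^2$) and $1$ at a cuspidal double point (where the cuspidal-edge branch meets the immersive branch), and with no contribution from the remaining stable singularities, giving $PAA'=2S+K$. Finally, the Fitting ideal $\mathcal{F}_2(f_t)$ cuts out the target triple-point scheme $M_3(f_t)$; from the presentation matrix of $(f_t)_*\mathcal{O}_{N_t}$ at each $0$-stable singularity (as in \cite{MondNuno_Singularities}) one finds local contribution $1$ at swallowtails, cuspidal double points, and transverse triple points, and $0$ elsewhere, giving $F_3=S+K+T$.

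The main obstacle I expect is the bookkeeping at the multi-germ singularities, in particular for $PAA'$ and $F_3$: one must compute the ideal at each double-point preimage or at the target point, keeping track of how the divided differences $\alpha,\alpha'$ and the presentation matrix behave under each branch of each normal form. A secondary subtlety is justifying that the families of ideals $(p_{t,y},p_{t,yy})$, $(p_{t,y},\tau_t)$, etc., are flat over the deformation parameter so that multiplicity is genuinely conserved; this is a standard consequence of finite colength together with the complete-intersection (or Cohen--Macaulay) structure of each ideal, but it should be stated explicitly.
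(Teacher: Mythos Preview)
Your approach is essentially the same as the paper's: establish that each of the four ideals is Cohen--Macaulay, invoke conservation of multiplicity under a frontal stabilisation, and then determine the coefficients of $S,K,T,W$. The difference lies only in how the coefficients are found. You compute the local contribution at each stable normal form from Table~\ref{table: stable double point space} directly; the paper instead evaluates each invariant on a short list of test germs ($\check S_k$, $4^k_1$, $5_3$, $6_1$, Table~\ref{table: frontal s k t w}) with known $S,K,T,W$ and solves the resulting linear system. Your route is more transparent about what each invariant is counting, at the cost of more bookkeeping (especially for $PAA'$ at multi-germ points); the paper's route sidesteps the delicate $D^2$ and presentation-matrix computations entirely but requires knowing $S,K,T,W$ for those four families independently.

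One point to tighten: you write that each ideal ``defines a zero-dimensional scheme by hypothesis, hence a complete intersection''. This is fine for $(p_y,p_{yy})$, $(p_y,\tau)$ and $(p_y,\alpha,\alpha')$, which are generated by $2$, $2$, $3$ elements in $\mathcal O_2$, $\mathcal O_2$, $\mathcal O_3$ respectively, so finite colength does force complete intersection. It is \emph{not} automatic for the Fitting ideal $\mathscr F_2(f)$, which is generated by minors and need not be a complete intersection; the paper handles this by citing \cite{MondNuno_Singularities}, Corollary~11.11, for the Cohen--Macaulay property of the target triple-point scheme. You should invoke the same result (or an equivalent determinantal Cohen--Macaulay statement) rather than the zero-dimensional shortcut.
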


\begin{proof}
    The ideals $(p_y,p_{yy})$, $(p_y,\tau)$ and $(p_y,\alpha,\alpha')$ are complete intersection in their respective algebras, and thus Cohen-Macaulay.
    Since $f$ has an isolated $\ms{F}$-instability, it follows from Theorem \ref{thm: frontal mather gaffney} and \cite{MondNuno_Singularities} Corollary 11.11 that $\ms{F}_2(f)$ is Cohen-Macaulay.
    By conservation of multiplicity (see \cite{MondNuno_Singularities} Appendix E), we then have that the integers $P_3$, $PT$, $PAA'$ and $F_2$ can be written as a linear combination of $S$, $K$, $T$ and $W$, whose coefficients can be found using the data from Table \ref{table: frontal s k t w}.
\end{proof}

\begin{remark}
    The identity $P_3=S$ can be easily derived for wavefronts using the results from \cite{MararMontaldiRuas_Swallowtail}.
\end{remark}

\begin{table}[ht]
\centering
\begin{tabular}{CC|CCCC}
	\text{Symbol}		& \text{Parametrization}	& S & K & T & W \\
	\hline
	\hline
	\check{S}_k		\M 	& (x,y^2,y^5+x^ky^3)		& 0	& 0 & 0 & k \\
	4^k_1			\M 	& (x,2y^3+x^ky,3y^4+x^ky^2)	& k & 0 & 0 & 0 \\
	5_3				\M 	& (x,5y^4+3xy^2,4y^5+2xy^3)	& 3 & 3 & 0 & 0 \\
	6_1				\M 	& (x,3y^5+xy,5y^6+xy^2)		& 3 & 6 & 1 & 0
\end{tabular}
	\caption{Some corank $1$ frontal surfaces and their dimension 0 singularities in the $\ms{F}$-stabilisation.
	\label{table: frontal s k t w}}
\end{table}

\begin{example}[Frontal fold surfaces]
	Let $f\colon (\mb{C}^2,0) \to (\mb{C}^3,0)$ be a frontal fold surface with prenormal form
	    \[f(x,y)=(x,y^2,y^3h(x,y^2))\]
	and let $\pi\colon (\mb{C}^3,0) \to (\mb{C}^2,0)$ be the projection $\pi(X,Y,Z)=(X,Y)$. The map $\pi\circ f$ has multiplicity $2$, so the presentation matrix for $f$ is $2\times 2$ and $\ms{F}_2(f)=\ms{O}_3$ by definition.
	It follows that $S+K+T=0$ and thus $S=K=T=0$.
    If $i\colon (\mb{C},0) \to (\mb{C}^2,0)$ is the germ of immersion given by $x \mapsto (x,0)$, we conclude using Example \ref{ex: fold double point} that $W=\mult(i^*h)$.
\end{example}

\subsection{Frontal Marar-Mond formulas}
Marar and Mond gave a formula in \cite{MararMond_Formula} that relates the Milnor numbers of the curves $D(f)$ and $f(D(f))$ for an $\ms{A}$-finite $f\colon (\mb{C}^2,0) \to (\mb{C}^3,0)$.
However, it is easy to derive from Theorem \ref{thm: equation double point space} that the Milnor number of $D(f)$ is only finite when $f$ does not feature cuspidal edges on its frontal disentanglement.
Therefore, we need to consider the branches of cuspidal edges and transverse double points separately.

Let $f\colon (\mb{C}^2,0) \to (\mb{C}^3,0)$ be a corank $1$ frontal surface with an isolated frontal instability at $0$ and $(f_t)$ be an $\ms{F}$-stabilisation of $f$.
The double point space $D(f_t)$ then splits into $(D_+)_t$ and $C_t$. If $f_t\colon N_t \to Z_t$ is a representative of $f_t$ with $t \neq 0$, we set the following partition for $N_t$:
\begin{itemize}
	\item $N_t^0$ is the set of $x \in D(f_t)$ where $f_t$ has an isolated singularity;
	\item $N_t^1=D(f_t)\backslash N_t^0$;
	\item $N_t^2$ is the set of $x \in N_t$ such that $f_t$ is immersive at $x$.
\end{itemize}
Since $D(f_t)=C(f_t)\cup D_+(f_t)$, $N_t^1$ splits into the disjoint union of $C(f_t)^1$ and $D_+(f_t)^1$, which we shall simply denote as $C_t^1$ and $D_t^1$.

\begin{lemma}
	Let $f\colon (\mb{C}^2,0) \to (\mb{C}^3,0)$ be a corank $1$ frontal map-germ with isolated instability.
	If $(f_t)$ is an $\ms{F}$-stabilisation of $f$, the projection $\pi_X: X(F) \to (\mb{C},0)$ is flat for $X \in \{C,D\}$.
\end{lemma}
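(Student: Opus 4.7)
The plan is to exploit the fact that the base $(\mathbb{C},0)$ is a discrete valuation ring, so flatness of a finitely generated module (or of an analytic map-germ whose source has finite fibres over the base) is equivalent to torsion-freeness, which for a cyclic module $\mathcal{O}_3/(g)$ amounts to $t$ being a non-zerodivisor modulo $g$. So the whole statement collapses to checking that $t$ and the relevant defining equation form a regular sequence in $\mathcal{O}_3$.

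First I would write the $\mathscr{F}$-stabilisation in prenormal form, say $F(x,y,t)=(x,P(x,y,t),Q(x,y,t),t)$ with $P_y \mid Q_y$, so that $F$ itself is a corank $1$ frontal on $(\mathbb{C}^3,0)$. Applying Proposition \ref{prop: corank 1 frontal} and Theorem \ref{thm: equation double point space} to $F$, the cuspidal curve $C(F)\subset (\mathbb{C}^3,0)$ is cut out by the principal ideal $(P_y)$, and the generator $\Lambda$ of the double point space $D(F)$ factors as $\Lambda=\mathcal{T}\cdot P_y^2$, with $D_+(F)=V(\mathcal{T})$. Thus in both cases $\mathcal{O}_{X(F)}=\mathcal{O}_3/(g)$ for a principal generator $g\in \{P_y,\mathcal{T}\}$.

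Next, since $\mathcal{O}_3$ is regular and hence Cohen-Macaulay, $\mathcal{O}_3/(g)$ is Cohen-Macaulay of dimension $2$ whenever $g$ is a nonzero non-unit, and the question reduces to whether $(g,t)$ is a regular sequence in $\mathcal{O}_3$. By the Cohen-Macaulay/system-of-parameters characterisation, this is equivalent to
\[
\dim_{\mathbb{C}}\mathcal{O}_3/(g,t)\cdot \text{as a germ}=\dim\mathcal{O}_3-2=1.
\]
Since $\mathcal{O}_3/(g,t)\cong \mathcal{O}_2/(g|_{t=0})$, this is exactly the dimension of the germ $(X(f_0),0)=(X(f),0)$. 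Now $f$ has isolated $\mathscr{F}$-instability at $0$, so by Proposition \ref{prop: finite frontal type} both $C(f)=V(p_y)$ and $D_+(f)=V(\tau)$ are $1$-dimensional curve-germs whenever they are nonempty. This gives the required dimension count and hence flatness.

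The only point requiring care is the degenerate case in which $\tau$ is a unit, which happens exactly when $f$ has no transverse self-intersection branch; then $D_+(F)$ is empty and flatness is vacuous. Likewise one must observe that $p_y\not\equiv 0$, which holds since $f$ is generically immersive and corank $1$ (else $p_y \mid q_y$ forces $q_y=0$ and $f$ has rank $\le 1$). With these remarks the argument is complete, and I expect the main (mild) obstacle to be bookkeeping: verifying carefully that the unfolded factorisation $\Lambda=\mathcal{T}P_y^2$ follows from Theorem \ref{thm: equation double point space} applied to $F$ as a corank $1$ frontal on $(\mathbb{C}^3,0)$, and that $\mathcal{T}|_{t=0}=\tau$.
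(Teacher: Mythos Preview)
Your proposal is correct and follows essentially the same route as the paper: write $F$ in prenormal form, observe that $C(F)$ and $D_+(F)$ are hypersurfaces in $(\mathbb{C}^3,0)$ defined by principal ideals (hence Cohen--Macaulay of dimension $2$), and then verify that the central fibre $X(f)$ has dimension $1$ to conclude flatness. The only cosmetic difference is that the paper cites the ``miracle flatness'' criterion from Matsumura \S 23 directly, whereas you phrase the same check as $t$ being a non-zerodivisor on $\mathcal{O}_3/(g)$ over the DVR $(\mathbb{C},0)$; your handling of the degenerate case $\tau$ a unit and the remark that the unfolded factorisation $\Lambda=\mathcal{T}P_y^2$ needs Theorem~\ref{thm: equation double point space} applied to $F$ are points the paper leaves implicit.
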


\begin{proof}
	Since $f$ is in prenormal form, we can write
		\[F(x,y,t)=(x,p_t(x,y),q_t(x,y),t)\]
	and we can compute $D(F)$ using the same procedure as described in \S\ref{doublepoints}.
	We then have that $C(F)=V((p_t)_y)$ and $D_+(F)=V(\tau_t)$.
	Both of these spaces are analytic surfaces contained in $\mb{C}^2\times\mb{C}$ with codimension $1$, so they are complete intersections and thus Cohen-Macaulay.

	Let $X=C,D$ and consider the projection $\pi_X\colon X(F) \to \mb{C}$ given by $\pi_X(x,y,t)=t$.
	We have $\pi^{-1}(\{0\})=X(f)$, which is a subspace of codimension $1$ in $X(F)$, which matches the dimension of $(\mb{C},0)$.
	It then follows from \cite{Matsumura} \S 23 that $\pi_X$ is flat.
\end{proof}

\begin{theorem}\label{thm: marar mond}
	Let $f\colon (\mb{C}^2,0) \to (\mb{C}^3,0)$ be a corank $1$ frontal surface with isolated $\ms{F}$-instability.
	If $V(p_y,\mu_y)=\{0\}$,
	\begin{align*}
		\mu(f(C(f)),0)	&=2S+\mu(C(f),0); 	\\
		2\mu(f(D(f)),0)	&=2K+2T+\mu(D(f),0)-W-S+1
	\end{align*}
\end{theorem}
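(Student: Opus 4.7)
The plan is to extend the strategy of \cite{MararMond_Formula} to the frontal setting. Fix an $\ms{F}$-stabilisation $(f_t)$ of $f$; by the preceding lemma, the families $C(F)\to(\mb{C},0)$ and $D_+(F)\to(\mb{C},0)$ are flat with Cohen--Macaulay total space, and so are the corresponding pushforward families in the target. The proof rests on the identity $\mu(Y,0)=2\delta(Y,0)-r(Y,0)+1$, valid for any reduced curve singularity with $r$ local branches, combined with conservation of $\delta$-invariants and of local conductor dimensions in the above flat families.

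For the first identity, factor the map $C(f)\to f(C(f))$ through the chain of partial normalisations $\widetilde{C(f)}\to C(f)\to f(C(f))$ to obtain
\[\delta(f(C(f)),0)-\delta(C(f),0)=\dim_\mb{C}\ms{O}_{C(f),0}/\ms{O}_{f(C(f)),0}.\]
By conservation, the right-hand side equals the sum of local conductor contributions over the 0-stable singularities of $f_t$. From the parametrisations in Table \ref{table: stable double point space}, the cuspidal curve is mapped isomorphically onto its image at every folded Whitney umbrella, cuspidal double point and transverse triple point; the only non-trivial contribution arises at each of the $S$ swallowtails, where the smooth source arc $V(y^2+x)$ is carried to the planar cusp $(-y^2,-2y^3,-y^4)$, contributing $\delta(\text{cusp})=1$. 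Since $f$ does not identify distinct branches of $C(f)$, $r(f(C(f)),0)=r(C(f),0)$, and substituting $\delta(f(C(f)),0)=\delta(C(f),0)+S$ together with this branch equality into $\mu=2\delta-r+1$ proves the first formula.

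For the second identity, the same strategy is applied to $D(f)=C(f)\cup D_+(f)$, the new feature being that $f|_{D_+(f)}$ is generically $2$-to-$1$ and identifies pairs of branches. At every 0-stable singularity one computes, from the parametrisations of Table \ref{table: stable double point space}, the local data $(\delta,r)$ both for $D(f_t)$ in the source and for $f_t(D(f_t))$ in the target: at a swallowtail the image is a planar cusp tangent to a smooth branch, at a folded Whitney umbrella it is a transverse cross, at a cuspidal double point it is the union of a smooth line with a transverse cusp, and at a transverse triple point it is three lines through the origin. These values then assemble, through $\mu=2\delta-r+1$ and conservation of $\delta$ across the flat families, into the claimed equality; the branch bookkeeping must account for the degree-two identification on $D_+(f)$ that merges pairs of branches, while the additive $+1$ arises from the $-r+1$ term at the single reduced basepoint at the origin.

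The main obstacle lies in the local computation of $\delta$ and $r$ at the swallowtail and at the cuspidal double point. In both cases the image of $D(f_t)$ is not a plane curve and exhibits coincident tangent cones: at the swallowtail, both the cuspidal image $(-y^2,-2y^3,-y^4)$ and the image of the double-point curve $(u,0,3u^2)$ are tangent to the $X$-axis; at a cuspidal double point the smooth $X$-axis meets a non-planar cusp in the plane $X=0$. In each such case the conductor $\ms{O}_{\widetilde Y}/\ms{O}_Y$ must be computed explicitly by parametrising both branches inside $\mb{C}\{t\}\oplus\mb{C}\{u\}$, and this case-by-case analysis is what produces the precise combination $2K+2T-W-S+1$ on the right of the formula.
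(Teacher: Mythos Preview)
Your approach is genuinely different from the paper's. You follow the original Marar--Mond strategy: use Milnor's formula $\mu=2\delta-r+1$, then control $\delta$ and $r$ separately via conservation in the flat family and a case-by-case analysis of the 0-stable models. The paper instead bypasses $\delta$ entirely and works with Euler characteristics, invoking the Buchweitz--Greuel identity $\mu(X_0)-\mu(X_t)=1-\chi(X_t)$ (Theorem~\ref{thm: milnor number flat family}) for each of the four flat families $C_t$, $f_t(C_t)$, $D_t$, $f_t(D_t)$. One then only needs the local Milnor numbers $\mu(C_t)=0$, $\mu(f_t(C_t))=2S$, $\mu(D_t)=2K+3T$, $\mu(f_t(D_t))=2K+2T$, together with the elementary covering relations $\chi(C_t)=\chi(f_t(C_t))$ and $\chi(D_t)-2\chi(f_t(D_t))=T-S-W$ coming from the $1$-to-$1$ and $2$-to-$1$ behaviour on the generic strata. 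Substituting gives both formulas with no space-curve $\delta$ computations at all.

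Your argument for the first identity is correct and complete. For the second, however, you only outline the method and explicitly flag the local $(\delta,r)$ computations at the swallowtail and cuspidal double point as ``the main obstacle'', without carrying them out; so as written the proposal is a plan rather than a proof. This is exactly the bookkeeping that the Euler-characteristic route sidesteps: the paper never needs to compute conductors of non-planar space curves with coincident tangent cones, only the Milnor numbers of the local models, which are immediate from Table~\ref{table: stable double point space}. Note also a notational point: in the second formula the paper uses $D(f)$ to mean the transverse double point curve $D_+(f)$ (see the convention $D_t^1:=D_+(f_t)^1$ set up just before the proof), not the full union $C(f)\cup D_+(f)$ as you read it; your branch bookkeeping would have to be adjusted accordingly.
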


To prove this result, we shall make use of the following
\begin{theorem}[\cite{BuchweitzGreuel_Milnor}]\label{thm: milnor number flat family}
    Let $\pi\colon X \to D\subset \mb{C}$ be a good representative of a flat family $\pi\colon (X,x) \to (D,0)$ of reduced curves. Given $t \in D$,
        \[\mu(X_0,x)-\mu(X_t)=1-\chi(X_t)\]
    where $X_t=\pi^{-1}(t)$ and $\mu(X_t)$ is the sum of the Milnor numbers over the singular points of $X_t$.
\end{theorem}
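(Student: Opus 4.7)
Plan: I'd prove the identity $\mu(X_0, x) - \mu(X_t) = 1 - \chi(X_t)$ by a two-step Euler-characteristic bookkeeping that compares $X_t$ to a further smoothing. The principal ingredient I intend to use at both ends of the comparison is the classical fact that, for an isolated reduced (smoothable) curve singularity $(C, z)$, the Milnor fibre $F$ is homotopy equivalent to a bouquet of $\mu(C, z)$ circles and therefore satisfies $\chi(F) = 1 - \mu(C, z)$; for non-plane singularities this is part of the content of the Buchweitz–Greuel identification of $\mu(C, z)$ with $\dim_\mb{C} \omega_{C,z}/d\ms{O}_{C,z}$, and in the plane-curve case it is the standard Milnor-fibration computation.

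First I would, after shrinking to a good representative, produce a further one-parameter flat deformation $X_{t,s}$ of $X_t$ that simultaneously smooths each singular point $y_j \in \mathrm{sing}(X_t)$. Such a simultaneous smoothing exists because each of the finitely many singularities $y_j$ of the generic fibre is isolated and admits a smoothing inside its own semi-universal deformation; glueing these local smoothings and composing with the original family $\pi$ yields a two-parameter flat deformation of $(X_0, x)$ whose generic member is smooth. Fix $s \neq 0$ small and write $X_s = X_{t,s}$ for the resulting smooth curve.

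The second step is to compute $\chi(X_s)$ in two ways. Locally at each singular point $y_j \in X_t$ the smoothing replaces the one-point set $\{y_j\} \subset X_t$ by the local Milnor fibre $F_j$ of $(X_t, y_j)$; since $\chi(F_j) = 1 - \mu(X_t, y_j)$ by the principle above, excision applied in a disjoint union of small balls around the $y_j$ gives
\[
    \chi(X_s) \;=\; \chi(X_t) - \sum_{y_j \in \mathrm{sing}(X_t)} \mu(X_t, y_j) \;=\; \chi(X_t) - \mu(X_t).
\]
On the other hand, the composition of the two deformations exhibits $X_s$ as the Milnor fibre of the original singularity $(X_0, x)$ inside the same nested system of good neighbourhoods, so
\[
    \chi(X_s) \;=\; 1 - \mu(X_0, x).
\]
Equating the two expressions and rearranging yields $\mu(X_0, x) - \mu(X_t) = 1 - \chi(X_t)$.

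The main obstacle is the careful set-up of the auxiliary smoothing together with the excision step: I must guarantee that the local Milnor fibres $F_j$ at the singularities of $X_t$ and the global Milnor fibre of $(X_0, x)$ are cut out inside a single compatible system of good representatives, so that the two computations of $\chi(X_s)$ genuinely refer to the same topological space. A secondary subtlety is invoking $\chi(F) = 1 - \mu$ for arbitrary, possibly non-plane, reduced curve singularities; this requires either restricting to smoothable singularities (which is precisely the setting of the theorem, since the family $\pi$ exhibits $(X_0, x)$ as deforming to the possibly-singular $X_t$) or appealing directly to the Buchweitz–Greuel algebraic computation that realises $\mu$ as the first Betti number of the smoothing.
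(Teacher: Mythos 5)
The paper offers no proof of this statement at all: it is imported verbatim from Buchweitz and Greuel, so the comparison here is between your argument and theirs. Your Euler-characteristic bookkeeping is arithmetically consistent, but it rests on two inputs that are not available. First, the identity $\chi(F)=1-\mu$ for the Milnor fibre $F$ of a smoothing of a reduced curve singularity --- which you invoke both for the local fibres $F_j$ at the singular points of $X_t$ and for the global fibre $X_s$ of $(X_0,x)$ --- is exactly the special case ``$X_t$ smooth'' of the theorem you are trying to prove. For plane curves this is Milnor's fibration theorem, but for general reduced (possibly non-planar, non-complete-intersection) curve singularities it is the principal content of Buchweitz--Greuel: there $\mu$ is \emph{defined} algebraically as $\dim_{\mathbb{C}}\,\omega_{X_0,x}/d\mathscr{O}_{X_0,x}$, shown to equal $2\delta-r+1$, and only then identified with the first Betti number of a smoothing. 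Your argument therefore reduces the general statement to its hardest special case rather than establishing it.

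Second, the auxiliary simultaneous smoothing of $X_t$ need not exist: reduced curve singularities of large embedding dimension can be non-smoothable (Pinkham's examples), and nothing in the hypotheses forces the singularities of $X_t$, or $(X_0,x)$ itself, to be smoothable --- the existence of the flat family $\pi$, whose nearby fibres are allowed to be singular, gives no such information. Your proposed fallback (that the setting is automatically one of smoothable singularities because $(X_0,x)$ deforms to $X_t$) does not close this gap, since a flat deformation to a \emph{singular} fibre says nothing about smoothability; and even where smoothings do exist, the independence of $\chi(X_s)$ from the choice of smoothing component is again part of what the theorem asserts. The actual proof in Buchweitz--Greuel sidesteps all of this by working with the relative dualising sheaf $\omega_{X/D}$ and the semicontinuity of $\delta$ along the family, with no smoothability hypothesis; your reduction-to-the-smooth-case computation is a correct way to \emph{use} the theorem once it is known (it is essentially the ``conservation of multiplicity'' argument the paper applies elsewhere), but it is not a self-contained proof of it.
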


\begin{proof}[Proof of Theorem \ref{thm: marar mond}]
On the one hand, cuspidal edges are $1$-to-$1$, hence $C_t^1\cong f(C_t^1)$.
Since $\chi(C_t)-\chi(C_t^1)=\chi(f(C_t))-\chi(f(C_t^1))$, it follows that $\chi(C_t)=\chi(f(C_t))$.
On the other hand, transverse double points are 2-to-1, so $\chi(D_t^1)=2\chi(f(D_t^1))$.
Using the identities
\begin{align*}
	\chi(D_t)-\chi(D_t^1)			&=S+W+2K+3T;\\
	\chi(f_t(D_t))-\chi(f(D_t^1))	&=S+W+K+T
\end{align*}
it then follows that $\chi(D_t)-2\chi(f_t(D_t))=T-S-W$.

Using Theorem \ref{thm: milnor number flat family} and the information provided in Table \ref{table: stable double point space}, we have
\begin{align*}
    \mu(C_t)=0&\implies \chi(C_t) = 1-\mu(C,0);\\
    \mu(f_t(C_t))=2S&\implies \chi(f_t(C_t))=2S-\mu(f(C),0)+1;\\
    \mu(D_t)=2K+3T&\implies \chi(D_t)=2K+3T-\mu(D,0)+1;\\
	\mu(f_t(D_t))=2K+2T&\implies 2\chi(f_t(D_t))=4K+4T-2\mu(f(D),0)+2
\end{align*}
The remaining parts are clear.
\end{proof}

\subsection{Frontal Milnor number}
Let $f\colon (\mb{C}^n,S) \to (\mb{C}^{n+1},0)$ be a smooth multi-germ with isolated instability and let $f\colon N \to Z$ be a representative of $f$.
Mond \cite{Mond_VanishingCycles} showed that the disentanglement of $f$ has the homology type of a wedge of $n$-spheres for a small enough $t \neq 0$.
These groups are known as the \textit{vanishing homology} of $f$.

Lê \cite{Le_VanishingCycles} proved that the vanishing homology does not depend on the choice of stabilisation, thus being an invariant for $f$.
The number of spheres is known as the \textbf{image Milnor number} of $f$, $\mu_I(f)$.
Moreover, Lê's result can also be applied in the case when $f$ is frontal and has an isolated frontal instability, allowing us to define the notion of a frontal Milnor number.

\begin{definition}
	Let $f\colon (\mb{C}^n,S) \to (\mb{C}^{n+1},0)$ be a frontal multi-germ with isolated $\ms{F}$-instability.
	We define the \textbf{frontal Milnor number} $\mu_{\ms{F}}(f)$ of $f$ as the number of $n$-spheres in the image of an $\ms{F}$-stabilisation of $f$.
\end{definition}

\begin{proposition}
	Given a non-constant, holomorphic plane curve $\gamma\colon (\mb{C},S) \to (\mb{C}^2,0)$,
		\[\mu_I(\gamma)=\mu_{\ms{F}}(\gamma)+\kappa\]
	where $\kappa$ is the number of cusps in a frontal stabilisation of $\gamma$.
\end{proposition}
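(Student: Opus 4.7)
The plan is to express both Milnor numbers as Euler characteristic deficits of the respective disentanglements, compute those Euler characteristics via normalisation, and then relate the two disentanglements by a further $\ms{A}$-perturbation of an $\ms{F}$-stabilisation.

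First I would invoke Mond's wedge-of-spheres theorem in both the $\ms{A}$- and $\ms{F}$-settings (as discussed just before the statement) to ensure that each connected component of $\Delta(\gamma)$ and $\Delta_{\ms{F}}(\gamma)$ is homotopy equivalent to a wedge of $1$-spheres. Writing $c,c'$ for the numbers of connected components of $\Delta(\gamma)$ and $\Delta_{\ms{F}}(\gamma)$ respectively, this yields
\[
\chi(\Delta(\gamma)) = c - \mu_I(\gamma), \qquad \chi(\Delta_{\ms{F}}(\gamma)) = c' - \mu_{\ms{F}}(\gamma).
\]

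Next I would compute each Euler characteristic through the normalisation $\tilde X \to X$, where $X$ is the image of the stabilisation and $\tilde X$ is the disjoint union of the $|S|$ source discs, so $\chi(\tilde X) = |S|$. Every transverse double point identifies two points of $\tilde X$ and thus lowers $\chi$ by $1$, while every cusp is a topological embedding (its normalisation $t\mapsto (t^2,t^3)$ is a homeomorphism onto the image) and therefore contributes $0$. Writing $N$ and $N'$ for the number of nodes in the $\ms{A}$- and $\ms{F}$-stabilisations of $\gamma$,
\[
\chi(\Delta(\gamma)) = |S| - N, \qquad \chi(\Delta_{\ms{F}}(\gamma)) = |S| - N'.
\]

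Third, starting from an $\ms{F}$-stabilisation $\gamma_t$ I would apply a further small $\ms{A}$-perturbation. A cusp has $\ms{A}_e$-codimension $1$ and delta invariant $1$, so a generic perturbation resolves it into exactly one new node. Because each cusp of $\gamma_t$ lives on a single local source branch, the new node created when it is smoothed is necessarily a self-intersection of that same branch, and so does not merge two distinct connected components of the image. Consequently $c' = c$ and $N = N' + \kappa$. Combining these with the Euler characteristic expressions above and subtracting gives $\mu_I(\gamma) - \mu_{\ms{F}}(\gamma) = \kappa$, as required. The composite two-step deformation is a genuine $\ms{A}$-stabilisation of $\gamma$ by taking a ray $(t,s)=(\varepsilon a,\varepsilon b)$ in the $(t,s)$-plane, and Lê's independence of the vanishing homology from the choice of stabilisation validates computing $\mu_I(\gamma)$ from it.

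The main obstacle is the equality $c=c'$; without it one would only conclude $\mu_I-\mu_{\ms{F}}=\kappa+(c-c')$. This step relies essentially on the observation that a cusp is a local phenomenon supported on a single source branch, so smoothing it cannot glue separate components of the disentanglement together. The remaining ingredients—the wedge-of-spheres theorem for both types of disentanglement, the normalisation/Euler characteristic bookkeeping for nodes and cusps, and the local versal unfolding of a cusp—are standard.
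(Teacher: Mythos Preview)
Your argument is correct and complete. The route differs from the paper's, though the two are close in spirit. The paper's proof simply invokes the conservation property of the image Milnor number (citing Fern\'andez de Bobadilla--Nu\~no-Ballesteros--Pe\~nafort or Conejero--Nu\~no-Ballesteros): for any deformation $\gamma_t$ of $\gamma$, one has $\mu_I(\gamma)=\beta_1(\operatorname{im}\gamma_t)+\sum_y \mu_I(\gamma_t;y)$, and then observes that taking $\gamma_t$ to be an $\ms{F}$-stabilisation gives $\beta_1(\operatorname{im}\gamma_t)=\mu_{\ms{F}}(\gamma)$ while each remaining instability is a cusp with local $\mu_I=1$. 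Your proof is essentially a hands-on verification of that conservation formula in the curve case, carried out via normalisation and Euler characteristics rather than cited as a black box. What you gain is self-containment; what the paper gains is brevity. One minor simplification: the wedge-of-spheres theorem you invoke already forces $c=c'=1$ (a wedge of spheres is connected), so your careful discussion of why smoothing a cusp cannot merge components, while correct, is not strictly needed.
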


\begin{proof}
	Let $\gamma\colon N \to Z$ be a representative of $\gamma$.
	Since $\gamma$ is non-constant, the Curve Selection Lemma implies that $S$ is an isolated subset of $\Sigma(\gamma)$.
	Shrinking $N$ if necessary, we can further assume that $N\cap \Sigma(\gamma)=S$, so $\gamma$ is immersive outside $S$.
	Since immersive maps are both $\ms{A}$-stable and $\ms{F}$-stable, the Mather-Gaffney criterion then gives us Item 1, and Theorem \ref{thm: frontal mather gaffney} gives us Item 2.

	Let $\gamma_t\colon N_t\to Z$ be a frontal stabilisation of $\gamma$.
	The curve $\gamma_t(N_t)$ contains at most normal crossings and plane cusps.
	By the conservation property of the image Milnor number \cite{ConejeroBallesteros_imageMilnor}, we have
		\[\mu_I(\gamma)=\mu_{\ms{F}}(\gamma)+\sum_{\mathclap{x \in \Sigma(\gamma_t)}}\mu_I(\gamma_t;\gamma_t(x))\]

	Let $x \in \Sigma(\gamma_t)$.
	Since $\gamma_t$ is $\ms{F}$-stable, the germ of $\gamma_t$ at $x$ is a cusp, so $\mu_I(\gamma_t;\gamma_t(x))=1$ and the statement follows.
\end{proof}

\begin{minipage}{.34\textwidth}
	\centering
	\includegraphics[width=\textwidth]{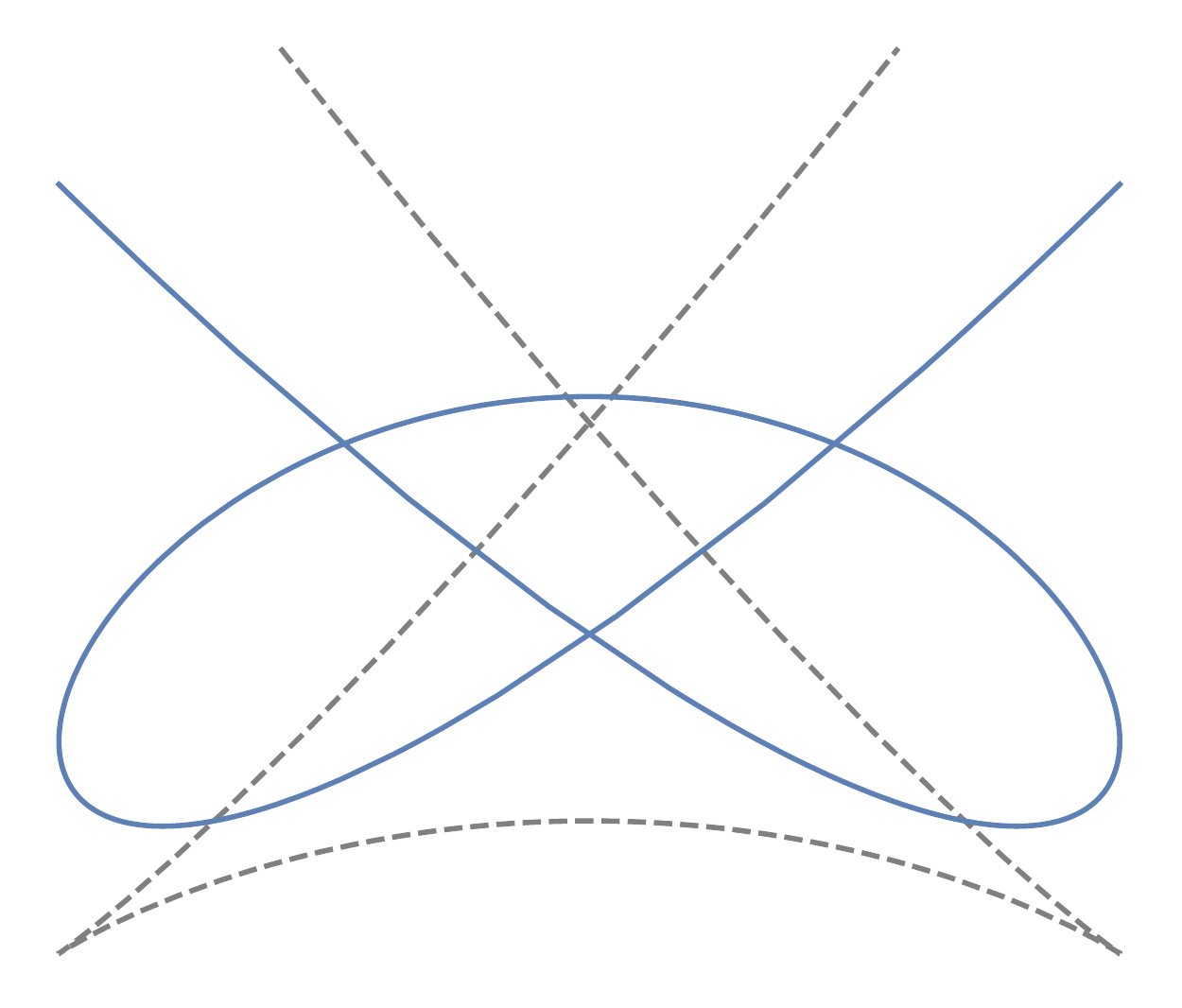}
\end{minipage}%
\hfill%
\begin{minipage}{.55\textwidth}
	\captionof{figure}{Frontal stabilisation of the $E_6$ singularity (dashed, grey) along with a stabilisation (solid, blue).\label{fig: E6 milnor}}
\end{minipage}

We now move onto frontal surfaces.
Let $f\colon (\mb{C}^2,0) \to (\mb{C}^3,0)$ be a corank $1$ frontal map with isolated instability, $(f_t)$ be an $\ms{F}$-stabilisation for $f$ and $F=(f_t,t)$.
We consider the function $\pi\colon F(\mb{C}^2\times\mb{C},0) \to (\mb{C},0)$ given by $\pi(X,Y,Z,t)=t$.
A result by L{\^e} \cite{Le_VanishingCycles} states that the fibres of $\pi$ have the homotopy type of a bouquet of $2$-spheres; however, one of such fibres is $\Delta_\ms{F}(f)$.
Therefore,
    \[\mu_{\ms{F}}(f)=\chi(f_t(N_t))-1=\chi(f_t(N_t^2))+\chi(f_t(N_t^1)+\chi(f_t(N_t^0)-1\]

\begin{theorem}\label{thm: milnorf}
	Given a corank $1$ frontal $f\colon (\mb{C}^2,0) \to (\mb{C}^3,0)$ with isolated $\ms{F}$-instability,
	\begin{align*}
		\mu_{\ms{F}}(f)	&=\mu(f(D(f)),0)-S-W+T+1=\\
						&=\frac{1}{2}\Big(\mu(D(f),0)+3(1-S-W)\Big)+K+2T
	\end{align*}
\end{theorem}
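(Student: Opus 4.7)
The plan is to compute $\chi(f_t(N_t))$ from the stratification of the disentanglement, express it in terms of $\chi(D(f_t))$ and $\chi(f_t(D(f_t)))$, and then substitute the flat-family Euler-characteristic formula (Theorem 5.8) together with the Marar-Mond identity (Theorem 4.13) to obtain each of the two equivalent forms.

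First I would reassemble the three summands in the identity $\mu_\ms{F}(f) = \chi(f_t(N_t^2)) + \chi(f_t(N_t^1)) + \chi(f_t(N_t^0)) - 1$ already set up before the statement. The input data is that $f_t$ is a bijection from $N_t^2$ onto the smooth $2$-dimensional part of $f_t(N_t)$, $1$-to-$1$ on the cuspidal-edge pieces of $N_t^1$, and $2$-to-$1$ on the transverse-double-point pieces, with source preimage counts $1,1,2,3$ for the four $0$-stable image singularities contributing to $S,W,K,T$ respectively. Using additivity of the Euler characteristic on complex constructible decompositions together with $\chi(N_t) = 1$, these multiplicities should collapse the sum to
\[
\mu_\ms{F}(f) = \chi(f_t(D(f_t))) - \chi(D(f_t)).
\]

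Next I would invoke Theorem 5.8 on the two flat families $D(F) \to (\mb{C},0)$ (flat by Lemma 5.9) and $f(D(F)) \to (\mb{C},0)$ (flat by the analogous Cohen-Macaulay argument used in Proposition 5.4 for the image side). Substituting the stable-singularity sums $\mu(D(f_t)) = 2K + 3T$ and $\mu(f_t(D(f_t))) = 2K + 2T$ read off Table 4.1 and recorded in the proof of Theorem 4.13 rewrites $\mu_\ms{F}(f)$ as an explicit integer combination of $\mu(D(f),0)$, $\mu(f(D(f)),0)$ and the integers $S, W, K, T$. Eliminating $\mu(D(f),0)$ via the Marar-Mond relation $2\mu(f(D(f)),0) = \mu(D(f),0) - S - W + 2K + 2T + 1$ from Theorem 4.13 produces the first form of the statement; performing the same substitution in reverse yields the second form, and the compatibility of the two forms is automatic via Marar-Mond.

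The main obstacle is the combinatorial bookkeeping in the stratification step: one has to verify that the source preimages of each of the four kinds of $0$-stable image singularity are assigned correctly to $N_t^0$ with the right multiplicities, that the cuspidal-edge and transverse-double-point pieces of $N_t^1$ are disjoint once the preimages of $0$-stable points are removed, and that the Euler-characteristic contributions from the $1$-to-$1$ and $2$-to-$1$ pieces combine consistently with the $\mu(D(f_t))$ and $\mu(f_t(D(f_t)))$ values provided by Table 4.1. Once this accounting is pinned down, the rest of the argument is formal algebra with Theorems 4.13 and 5.8.
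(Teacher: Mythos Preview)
Your approach is essentially the paper's own: both arguments stratify the disentanglement, use that $f_t$ is a bijection on $N_t^2$, a homeomorphism on the cuspidal piece of $N_t^1$, and a double cover on the transverse piece, and then feed the resulting Euler-characteristic identity into the Buchweitz--Greuel formula (Theorem~\ref{thm: milnor number flat family}) together with the Milnor-number counts $\mu(D_t)=2K+3T$ and $\mu(f_t(D_t))=2K+2T$ already recorded in the proof of Theorem~\ref{thm: marar mond}. The only cosmetic difference is that you package the bookkeeping into the single intermediate identity $\mu_{\ms F}(f)=\chi(f_t(D_t))-\chi(D_t)$ and then apply Theorem~\ref{thm: milnor number flat family} to \emph{both} $D$ and $f(D)$, whereas the paper carries explicit $S,W,K,T$ terms through, applies Theorem~\ref{thm: milnor number flat family} only to $f_t(D_t)$ to obtain the first displayed form, and leaves the second form to follow from Theorem~\ref{thm: marar mond}. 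One point to keep straight: in this theorem and in Theorem~\ref{thm: marar mond} the symbol $D(f)$ denotes the transverse double-point curve $D_+(f)$, not the full set-theoretic double locus $C(f)\cup D_+(f)$; your substitutions $\mu(D(f_t))=2K+3T$ and the Marar--Mond relation are valid only under that reading, so make sure your collapse step is stated for $D_+$.
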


\begin{proof}
	We can assume without any loss of generality that $N_t$ is an open, convex neighbourhood of $0$, in which case $\chi(N_t^2)=1-\chi(N_t^0)-\chi(N_t^1)$.
	Since $f_t(N_t^2) \cong N_t^2$ by construction,
		\[\mu_{\ms{F}}(f)=\chi(f_t(N_t^1))-\chi(N_t^1)+\chi(f_t(N_t^0))-\chi(N_t^0)\]

	Recall that $N_t^1=C(f_t)^1\sqcup D(f_t)^1$.
	Since $C_t^1\cong f_t(C_t^1)$ and $D_t^1$ is a double cover for $f_t(D_t^1)$,
	\begin{align*}
		\chi(f_t(N_t)^0)-\chi(N_t^0)&=-K-2T;\\
		\chi(f_t(N_t)^1)-\chi(N_t^1)&=\chi(f_t(D_t))-S-W-K-T;
	\end{align*}
	from which follows that $\mu_{\ms{F}}(f)=\chi(f_t(D_t))-S-W-2K-3T$.
	Using Theorem \ref{thm: milnor number flat family}, we have $\chi(f_t(D_t))=2K+2T-\mu(f(D),0)+1$, from which follows that
		\[\mu_{\ms{F}}(f)=\mu(f(D),0)-S-W+T+1\]
\end{proof}

We finish this section by proposing a frontal version of Mond's conjecture \cite{Mond_Conjecture}:
\begin{conjecture}\label{mond frontal}
	Let $f\colon (\mb{C}^n,S) \to (\mb{C}^{n+1},0)$ be an $\ms{F}$-finite frontal map.
	Then $\mu_{\ms{F}}(f) \geq \codim_{\ms{F}_E}(f)$, with equality if and only if $f$ is quasihomogeneous.
\end{conjecture}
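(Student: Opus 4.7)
The plan is to attack the inequality and the equality clause separately, using the frontalisation machinery of Section 3 to reduce to the classical Mond conjecture wherever possible. I would proceed by induction on $n$ (or at least in the cases $n=1,2$ where the classical conjecture is settled).

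I would first settle the curve case ($n=1$) by leveraging the identity $\mu_I(\gamma)=\mu_{\ms{F}}(\gamma)+\kappa$ already proved in Section 5. A parallel algebraic identity $\codim_{\ms{A}_e}(\gamma)=\codim_{\ms{F}_e}(\gamma)+\kappa$ should hold, since each cusp appearing in an $\ms{F}$-stabilisation corresponds to a direction in $\theta(\gamma)/\ms{F}(\gamma)$ transverse to the frontal condition; this identification can be made precise by unfolding via the cuspidal part of the critical ideal. Subtracting the two yields $\mu_{\ms{F}}(\gamma)-\codim_{\ms{F}_e}(\gamma)=\mu_I(\gamma)-\codim_{\ms{A}_e}(\gamma)$, and the known Mond inequality for curves (with its quasihomogeneous equality) then transfers at once, because quasihomogeneity of $\gamma$ is detected by the grading already visible in its $\ms{F}$-stabilisation.

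For the surface case ($n=2$), I would exploit Theorem \ref{thm: frontalisation preserves codimension}. For a fold surface $f$ with frontalisation $\check{f}$ we have $\codim_{\ms{A}_e}(f)=\codim_{\ms{F}_e}(\check{f})$, so what remains is to prove $\mu_{\ms{F}}(\check{f})=\mu_I(f)$. Comparing the classical Marar-Mond formulas for $f$ with those of Theorem \ref{thm: marar mond} and Theorem \ref{thm: milnorf} for $\check{f}$, both sides should reduce to the same combination of $\mu(D,0)$ and the $0$-stable counts $S,K,T,W$, up to the identifications arising from the double cover $y\mapsto y^2$. The conjecture for surfaces would then follow from the (now established) Mond conjecture in dimension $n=2$. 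Since Mond's classification shows that every $\ms{A}$-simple frontal singularity of a surface is of fold type, this handles all simple germs, and more generally every fold frontal. For the quasihomogeneity clause in arbitrary dimension I would run the standard Euler-field argument: a positive weighting of $f$ yields an action of the Euler vector field on $\ms{F}(f)/T\ms{A}_ef$ and on the vanishing cohomology of $\Delta_\ms{F}(f)$; a Koszul-type complex, parallel to the Damon-Mond proof, should collapse to force $\mu_{\ms{F}}(f)=\codim_{\ms{F}_e}(f)$.

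The main obstacle is the inequality for corank $1$ frontal surfaces of non-fold type. Such germs do not arise as frontalisations, so there is no direct bridge to the classical Mond conjecture, and one must analyse the vanishing cohomology of the frontal disentanglement directly. Concretely, one would need a perverse-sheaf or mixed Hodge structure treatment of $f_t(N_t)$ that separates the cuspidal contribution (coming from $C(f)$) from the transverse contribution (coming from $D_+(f)$), refining the approach of Fernández de Bobadilla-Nuño-Ballesteros-Peñafort-Sanchis to the frontal category. For $n\geq 3$ the frontal conjecture will remain conditional on the general Mond conjecture, which is itself open; the best one can hope for in that range is to prove that the frontal inequality holds whenever the classical one does.
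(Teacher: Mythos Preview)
The statement you are attempting to prove is labelled \emph{Conjecture} in the paper, and the paper offers no proof of it whatsoever; it is proposed as an open problem in the final paragraph. There is therefore no ``paper's own proof'' to compare your proposal against.

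As for the proposal itself, it is a research outline rather than a proof, and several of its key steps are themselves unproven conjectures. In the curve case you assert that $\codim_{\ms{A}_e}(\gamma)=\codim_{\ms{F}_e}(\gamma)+\kappa$ ``should hold'', but you give no argument; the heuristic that each cusp contributes one transverse direction to $\theta(\gamma)/\ms{F}(\gamma)$ is plausible but not established in the paper or in your sketch. In the surface case you would need $\mu_{\ms{F}}(\check f)=\mu_I(f)$ for fold germs; you suggest this follows by matching the Marar--Mond formulas, but those formulas express the Milnor numbers in terms of \emph{different} invariants ($S,K,T,W$ for $\check f$ versus cross-caps and triple points for $f$), and no identification between these counts is supplied. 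Even granting all this, you explicitly acknowledge that non-fold corank~$1$ frontal surfaces are not covered, so the surface case would remain incomplete. Finally, the quasihomogeneous equality clause via a ``Koszul-type complex, parallel to the Damon--Mond proof'' is a programme, not an argument: the analogous structure on $\ms{F}(f)/T\ms{A}_ef$ and on the frontal vanishing cohomology has not been constructed.

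In short: the paper does not claim to prove this statement, and your proposal does not prove it either; it is a reasonable strategy sketch whose main ingredients are themselves open.
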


	\bibliographystyle{plain}
	\bibliography{references}
\end{document}